\theoremstyle{plain}
\newtheorem{Thm}{Theorem}[section]
\newtheorem{Lem}[Thm]{Lemma}
\newtheorem{Cor}[Thm]{Corollary}
\newtheorem*{Thm*}{Theorem}
\theoremstyle{remark}
\theoremstyle{definition}
\newtheorem{Rem}[Thm]{Remark}
\newtheorem{Ex}[Thm]{Example}
\newtheorem{Exs}[Thm]{Examples}
\newtheorem{Rems}[Thm]{Remarks}
\newtheorem{Not}[Thm]{Notation}
\newtheorem{Def}[Thm]{Definition}
\newtheorem{Defs}[Thm]{Definitions}
\newtheorem{Rev}[Thm]{Review}
\numberwithin{figure}{Thm}
\newcommand{\abs}[1]{\left\lvert#1\right\rvert} 
\newcommand{\norm}[1]{\left\lVert#1\right\rVert}
\newcommand{\gen}[1]{\langle\mkern3mu#1\mkern3mu\rangle}
\newcommand{\normgen}[1]{{\langle}{\mkern-3.7mu}{\lvert}{\mkern1.2mu}{#1}{\mkern1.2mu}{\rvert}{\mkern-3.7mu}{\rangle}}
\newcommand{\gp}[2]{\gen{{#1}\mid #2}}
\newcommand{\prs}[2]{\gen{#1\parallel #2}}
\newcommand{\lsup}[2]{{}^{#1}\mkern-1mu{#2}}
\renewcommand{\le}{\leqslant}
\renewcommand{\ge}{\geqslant}
\def\d1{\discretionary{-}{}{-}}
\def\ors{sur\-face\d1plus\d1one\d1re\-la\-tion }
\def\Ors{Sur\-face\d1plus\d1one\d1re\-la\-tion }
\def\ore{one\d1re\-la\-tor }
\def\Ore{One\d1re\-la\-tor }
\def\maxx{\text{max}}
\def\Z{\mathbb{Z}}
\def\Q{\mathbb{Q}}
\def\C{\mathbb{C}}
\def\R{\mathbb{R}}
\def\rfpap{{\operatorname{\scriptstyle{\text{\bf R}}}}{\mathcal{F}}\mkern -1.2mu{p}{\forall}{p}}
\def \leftmod {{\setminus}}
\def \rightmod {{/}}
\def\onto{\twoheadrightarrow}
\def\coloneqq{\mathrel{\mathop\mathchar"303A}\mkern-1.2mu=}
\def\eqqcolon{=\mkern-1.2mu\mathrel{\mathop\mathchar"303A}}
\DeclareMathOperator{\glue}{glue}
\DeclareMathOperator{\axis}{axis}
\DeclareMathOperator{\Eaxis}{\text{$E$}axis}
\DeclareMathOperator{\Hop}{H}
\DeclareMathOperator{\supp}{supp}
\DeclareMathOperator{\tr}{tr}
 \DeclareMathOperator{\Tor}{Tor}
\DeclareMathOperator{\cd}{cd}
\DeclareMathOperator{\vcd}{vcd}
\DeclareMathOperator{\FL}{FL}
\DeclareMathOperator{\uE}{\underline{E}}
\DeclareMathOperator{\rank}{rank}
\begin{document}

\pagestyle{myheadings}
\markboth{\Ors groups}{Y. Antol\'{\i}n, W. Dicks and P.\ A.\ Linnell}
\title{Non-orientable \ors groups}

\author{Yago Antol\'{\i}n, Warren Dicks and  Peter A.\ Linnell}

\date{\today}

\maketitle

 \hfill\textit{To the memory of Karl Gruenberg}

\begin{abstract}
Recently Dicks-Linnell determined
the $L^2$-Betti numbers of the orientable \ors groups, and
their arguments involved some results that were obtained topologically by Hempel and Howie.
Using algebraic arguments, we now extend all these results of Hempel and Howie
to a larger class of two-relator groups, and we then apply the extended results
 to determine the $L^2$-Betti numbers of the  non-ori\-ent\-able \ors groups.

\medskip

{\footnotesize
\noindent \emph{2000 Mathematics Subject Classification.} Primary: 20F05;
Secondary: 16S34, 20J05.

\noindent \emph{Key words.} $L^2$-Betti number, \ors  group}
\end{abstract}

\maketitle

\section{Notation} \label{Sbackground}

In this section, we collect together the conventions and basic notation we shall use.

Let $G$ be a (discrete) multiplicative group, fixed throughout the article.

For two subsets $A$, $B$ of a set $X$, the complement of $A \cap B$ in $A$ will be denoted
by $A-B$ (and not by $A\setminus B$ since we let
 $G\backslash Y$ denote the set of $G$-orbits of a left $G$-set $Y$).

By an \textit{ordering},  $<$, of a set, we shall mean a  binary relation  which \textit{totally} orders
the set.

A  \textit{sequence} is a \textit{set} endowed with a specified listing of its elements,
usually represented as a vector in which the coordinates are the elements of
(the underlying set of) the sequence.
For two sequences $A$, $B$,  their  concatenation will be denoted~$A \vee B$.

We use $\mathbb{R} \cup \{-\infty,\infty\}$
with the usual conventions, such as
$\frac{1}{\infty} = 0$.

We will find it useful to have a notation  for intervals in $\Z$ that is different from the notation for intervals in $\R$.

Let $i$, $j \in \Z$.

We write $$[i{\uparrow}j]\coloneqq
 \begin{cases}
(i,i+1,\ldots, j-1, j) \in \Z^{j-i+1}   &\text{if $i \le j$,}\\
() \in \Z^0 &\text{if $i > j$.}
\end{cases}
$$
Also, $[i{\uparrow}\infty[\,\,\,\, \coloneqq (i,i+1,i+2,\ldots)$ and
$[i{\uparrow}\infty]\coloneqq  [i{\uparrow}\infty[ \,\,\,\vee\,\, \{\infty\}$.
We define $[j{\downarrow}i]$ to be the reverse of the sequence $[i{\uparrow}j]$,
that is, $(j,j-1,\ldots,i+1,i)$.

We shall use sequence notation to define families of
indexed symbols.
Let $v$ be a symbol.
For each $k \in \Z$, we let $v_k$ denote the ordered pair
$(v,k)$.  We let $$v_{[i{\uparrow}j]}\coloneqq  \begin{cases}
(v_i,v_{i+1},   \cdots, v_{j-1},   v_j)  &\text{if $i \le j$,}\\
()  &\text{if $i > j$.}
\end{cases}$$
Also, $v_{[i{\uparrow}\infty[\,} \coloneqq (v_i,v_{i+1},v_{i+2},\ldots)$.
We define $v_{[j{\downarrow}i]}$ to be the reverse of the sequence~$v_{[i{\uparrow}j]}$.

Now suppose that $v_{[i{\uparrow}j]}$ is a sequence \textit{in} the group $G$,
that is, there is specified a map of sets $v_{[i{\uparrow}j]} \to G$.  We treat the
elements of $v_{[i{\uparrow}j]}$ as elements of $G$, possibly with repetitions,
and we define
\begin{align*}\Pi v_{[i{\uparrow}j]}&\coloneqq  \begin{cases}
v_i   v_{i+1}   \cdots v_{j-1}   v_j \in G   &\text{if $i \le j$,}\\
1 \in G &\text{if $i > j$.}
\end{cases}\\ \Pi v_{[j{\downarrow}i]} &\coloneqq  \begin{cases}
v_j   v_{j-1}   \cdots v_{i+1}   v_i \in G   &\text{if $j \ge i$,}\\
1 \in G &\text{if $j< i $.}
\end{cases}\end{align*}

For elements $a$, $b$ of  $G$,  we write $\overline a \coloneqq a^{-1}$,   $\lsup{a}{b} \coloneqq ab\overline  a$,
and $[a,b]\coloneqq ab\overline  a \overline b$.

For any subsets $R$ and $X$ of $G$, we let $\gen{R}$ denote the subgroup of $G$ generated by $R$,
we write
$\lsup{X}\!{R}\coloneqq \{\lsup{x}{r} \mid r \in R, x \in X\}$,
 and $G/\normgen{R} \coloneqq G/\gen{\lsup{G}\!{R}}$.
If $R = \{r\}$, we write simply $\gen{r}$, $\lsup{X}{r}$ and $G/\normgen{r}$, respectively.

For each set $X$,   the  \textit{vague cardinal} of $X$, denoted $\abs{X}$,  is
the element of $ [0{\uparrow}\infty]$ defined as follows.
If $X$ is a finite set, then $\abs{X}$ is defined to be  the
  cardinal of $X$, an element of $[0{\uparrow}\infty[$\,.
If $X$ is an infinite set, then $\abs{X}$ is defined to be $\infty$.

The \textit{rank} of~$G$, denoted $\rank(G)$, is the smallest element
of the subset of $[0{\uparrow}\infty]$ which
consists of vague cardinals  of  generating sets of $G$.

Mappings of left modules  will usually be
written on the right of their arguments.

\section{Background and summary of results} \label{Ssummary}

In outline, the article has the following structure.
More detailed definitions can be found in the appropriate sections.

Let $S$ be a surface group, that is, the fundamental group of a  closed  surface.
Here, there exists some $k \in [0{\uparrow}\infty[$ and
a presentation $S = \gp{x_{[1{\uparrow}k]}}{w}$ where
either $k$ is even and
$w = \prod_{i \in [1{\uparrow}\frac{k}{2}]} [x_{2i-1},x_{2i}]$ (the orientable case),
or $k \ge 1$ and $w = \prod x^2_{[1{\uparrow}k]}\vspace{1mm}$ (the non-orientable case).
 Let $r \in  \gp{x_{[1{\uparrow}k]}}{\quad}$ and let
$G\coloneqq \gp{x_{[1{\uparrow}k]}}{w, r}\vspace{1mm}$; we say that
$G$ is a \textit{\ors group}. A simple example is $\gp{a,b,c}{
a^2b^2c^2, abc} \simeq \gp{a,b}{[a,b]}$.
Under the name  `one\d1re\-la\-tor
surface groups',  the orientable \ors groups were introduced and
studied by Hempel~\cite{Hempel90}, and further investigated by
Howie~\cite{Howie04}, with the aim of carrying some of the known
theory of one-relator groups over to these two-relator groups.

The purpose of this article is to study
$G$
algebraically, and generalize the work of Hempel and Howie to include
the non-orientable case.
If $k \le 2$, then   $G$ is virtually abelian of rank at most two,
and we  consider such groups to be well understood.  Thus we assume that $k\ge 3$,  and here
the closed surface is said to be \textit{hyperbolic}.
Recall the following.
\begin{Lem}\label{Lem1} A free generating sequence of $\gp{a,b,c}{\quad}$ is
given by
$(x,y,z)\coloneqq (\overline b \overline a \,\overline c \overline b, ab, cab),$ and here
$\,\,\lsup{abcab}{([x,y]z^{2})} = (abcab) (\overline b \overline a \,\overline c \overline b)
(ab)(bcab)(\overline b \overline a)(cab)(cab) (\overline b \overline a\, \overline c \overline b \overline a)=
a^2b^2c^2.$ \hfill\qed
\end{Lem}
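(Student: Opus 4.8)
The plan is to establish the two assertions in turn: that $(x,y,z)$ is a free generating sequence of $\gp{a,b,c}{\quad}$, and then the displayed identity $\lsup{abcab}{([x,y]z^2)} = a^2b^2c^2$.

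For the first assertion, I would exhibit the inverse substitution explicitly. Writing $F$ for the free group on a fresh alphabet $\{x,y,z\}$, let $\phi\colon F \to \gp{a,b,c}{\quad}$ send $x,y,z$ to $\overline b\,\overline a\,\overline c\,\overline b$, $ab$, $cab$, respectively. Solving these relations for $a,b,c$ — from $z = cab = c\cdot(ab)$ one gets $c = z\overline y$; rewriting $x = \overline y\,\overline c\,\overline b$ gives $bc = \overline x\,\overline y$, hence $b = \overline x\,\overline z$; and then $a = y\overline b = yzx$ — I would define $\psi\colon \gp{a,b,c}{\quad} \to F$ by $a \mapsto yzx$, $b \mapsto \overline x\,\overline z$, $c \mapsto z\overline y$. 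A routine check that $\phi\psi$ and $\psi\phi$ fix the respective free generators shows $\phi$ is an isomorphism, and since it carries the free generating sequence of $F$ to $(x,y,z)$ as defined in the statement, the latter is a free generating sequence of $\gp{a,b,c}{\quad}$.

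For the second assertion, I would simply expand and freely reduce. By definition $[x,y]z^2 = xy\overline x\,\overline y\,zz$, so with $g \coloneqq abcab$ we have $\lsup{g}{([x,y]z^2)} = g\,x\,y\,\overline x\,\overline y\,z\,z\,\overline g$; substituting the words for $x$, for $\overline x = bcab$, for $y$, for $\overline y = \overline b\,\overline a$, for $z = cab$ and for $\overline g = \overline b\,\overline a\,\overline c\,\overline b\,\overline a$ produces precisely the eightfold product displayed in the statement. It then only remains to verify that this product reduces to $a^2b^2c^2$, which happens by telescoping cancellation: $gx$ reduces to $a$ (the word $x = \overline b\,\overline a\,\overline c\,\overline b$ cancels the suffix $bcab$ of $g$), then appending $y = ab$ gives $a^2b$, then $\overline x = bcab$ followed by $\overline y = \overline b\,\overline a$ leaves $a^2b^2c$, then the two copies of $z = cab$ produce $a^2b^2c^2\cdot abcab$, and finally $\overline g$ cancels the trailing $abcab$, leaving $a^2b^2c^2$.

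There is no genuine obstacle here — the whole lemma is a finite computation. The only point requiring care is bookkeeping: keeping straight the conventions $\overline a = a^{-1}$, left-to-right multiplication and $\lsup{a}{b} = ab\overline a$, and, above all, recording the candidate inverse $\psi$ correctly, since the argument for the first assertion rests on the identities $\phi\psi = \operatorname{id}$ and $\psi\phi = \operatorname{id}$ holding verbatim on generators. (Conceptually, the identity reflects the homeomorphism between the closed non-orientable surface of genus $3$ and the connected sum of a torus with a projective plane, but the proof needs none of this topology.)
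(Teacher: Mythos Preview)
Your proposal is correct. The paper gives no proof at all --- the lemma ends with \qed\ immediately after the statement, treating both assertions as routine verifications (the displayed eightfold product is already the whole content of the second claim). Your approach agrees with this in spirit, but you add genuine value by writing down the inverse substitution $\psi$ explicitly and checking both compositions; this is more than the paper bothers to do, and it is the cleanest way to certify that $(x,y,z)$ really is a free generating sequence rather than merely a generating set.
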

\noindent Thus, on setting  $d = k-2 \in [1{\uparrow}\infty[$,
we can change the generating sequence  from $x_{[1{\uparrow}k]}$
to   $(x,y) \vee z_{[1{\uparrow}d]}$ and arrange for
$w$ to take  the form $[x,y]u$, with $u \in \gen{ z_{[1{\uparrow}d]}}$.

This leads us to consider
the class of two-relator groups described as follows.
Let\linebreak $d \in [0{\uparrow}\infty[$\,, let  $F\coloneqq \gp{(x,y) \vee z_{[1{\uparrow}d]}}{\quad}$,
let $u$ and $r$ be elements of $F$,  suppose that $u \in  \gen{ z_{[1{\uparrow}d]}}$,
let
$S\coloneqq \gp{(x,y) \vee z_{[1{\uparrow}d]}}{[x,y]u} \quad \text{and let} \quad
G  \coloneqq  \gp{(x,y) \vee z_{[1{\uparrow}d]}}{[x,y]u,\,\, r}.$
Notice that our surface group has been changed to a form
 which includes many new groups, while
 we   lose three closed
surfaces, namely the sphere, the projective plane, and the Klein bottle.

In Section~\ref{sec:basic}, we introduce a rewriting procedure for $S$.

In Section~\ref{sec:potent}, we recall the concepts of potency and `being residually
a finite $p$-group for each prime~$p$', and we show that
 $S$ enjoys these properties,
and we discuss connections with early work of Karl Gruenberg.
The potency of $S$
is used later in Section~\ref{sec:HNN}  to show that
$G$
is virtually torsion free.

In Section~\ref{sec:HR}, we shall see that by changing $(x,y)$
to a different free generating sequence of $\gp{x,y}{\quad}$  without changing $[x,y]$,
and then by carefully  changing $r$ without changing the conjugacy class of the
image of $r$ in  $S$, we
may assume that we have a presentation in which either
 $r = x^m$ for some $m \in [0{\uparrow}\infty[$\,,
or $d\ge 1$ and $r$ is what we shall call a `Hempel relator
for the presentation $\prs{(x,y) \vee z_{[1{\uparrow}d]}}{[x,y]u}$'.
Notice that we use a double bar to distinguish a presentation  from the group being presented.

In the case where $r=x^m$, we shall see that
$G$ is  virtually  one-relator, and we consider
such groups to be well understood.

The main part of the article then
examines the case where $r$ is a Hempel relator; here, we can
generalize  the results that Hempel and Howie obtained for hyperbolic  orientable
\ors groups.

In Section~\ref{sec:HNN}, for $r$  a Hempel relator,
we perform  what Howie calls `Hempel's trick' and express $G$ as an HNN-extension of
a one-relator group over an isomorphism between two free subgroups.
We deduce that if  $r$ generates a \textit{maximal} cyclic subgroup of $F$,   then
$G$
is locally indicable.
The proof  uses a deep result of Howie which he proved by topological methods;
we present a shorter proof, based on Bass-Serre theory, in an appendix.
Let $m$ denote  the index of  $\gen{r}$ in a maximal cyclic subgroup of~$F$;
we show that
\newline \centerline
{$G$  is (((free) $\rtimes$ (cyclic of order $m$)) by (locally indicable)).}

In Section~\ref{sec:exact}, for $r$ a Hempel relator,
we construct an exact sequence which gives
 a two-dimensional $\uE G$.

In Section~\ref{sec:VFL}, we recall the concept of VFL and show that
$G$ enjoys this property,
and we calculate   Euler characteristics.

In Section~\ref{sec:L2}, we apply all the foregoing Hempel-Howie-type results
to  calculate the $L^2$-Betti numbers of the   \ors groups;
for the orientable case this was done in~\cite[Theorem~5.1]{DicksLinnell}, in essentially the same way,
using the original Hempel-Howie results.

In the appendix, as mentioned, we use Bass-Serre theory to simplify proofs of some important results of
Howie on local indicability.

\section{Rewriting in  $S = \gp{(x,y) \vee z_{[1{\uparrow}d]}}{[x,y]u}$}\label{sec:basic}

We shall use the following at various points in the article.

\begin{Not}\label{Not:gen} Let $d \in [0{\uparrow}\infty[$\,, let
$x$, $y$ and $z$ be symbols, let
$F \coloneqq \gp{(x,y) \vee z_{[1{\uparrow}d]}}{\quad}$,
and  let $u$ and $r$ be elements of $F$.  Suppose that
$u \in \gen{z_{[1{\uparrow}d]}}$.
Let  $S \coloneqq  \gp{(x,y) \vee z_{[1{\uparrow}d]}}{[x,y]u}$, and let
$G \coloneqq \gp{(x,y) \vee z_{[1{\uparrow}d]}}{[x,y]u,\, r}$.
We shall denote the natural map $F \to S$  by\linebreak $w \mapsto w\text{\,mod}[x,y]u$.

 The two-relator group $G$ is
a one-relator quotient   of $S$, with relator  $r \text{\,mod}[x,y]u$.

Let
$N(F)\coloneqq \gp{\Z \times ((x) \vee z_{[1{\uparrow}d]}) }{\quad}$.
For each $i \in \Z$, we shall denote the natural map\vspace{-1mm}
$$\gp{(x) \vee z_{[1{\uparrow}d]} }{\quad} \quad\simeq \quad \gp{ \{i\} \times ((x) \vee z_{[1{\uparrow}d]})
 }{\quad} \quad \le \quad \gp{ \Z \times ((x) \vee z_{[1{\uparrow}d]}) }{\quad}$$
by $w \mapsto \lsup{i}{w}$.
Let $y$ denote the automorphism of $N(F)$ determined by the shifting bijection\vspace{-1mm}
$$(\,\lsup{i}{x})\,\,\, \vee \,\,\,\lsup{i}{z}_{[1{\uparrow}d]}  \quad \to \quad (\,\lsup{i+1}{x})
 \,\,\, \vee \,\,\,\lsup{i+1}{z}_{[1{\uparrow}d]} $$
 for all
$i \in \Z$.   Hence $C_\infty\coloneqq \gp{y}{\quad}$ acts on $N(F)$.
If we form the semidirect product
\linebreak $N(F) \rtimes C_\infty$, we get the presentation $\gp{(x,y) \vee z_{[1{\uparrow}d]}}{\quad }$.  Thus we may
 identify   $N(F) \rtimes C_\infty$ with $F$, and  $N(F)$ then becomes the normal subgroup of $F$
generated by $(x) \vee z_{[1{\uparrow}d]}$, that is,
the set of elements whose $y$-exponent sum, with respect to $(x,y) \vee z_{[1{\uparrow}d]}\vspace{1mm}$, is zero.
 Notice that, for each
$(i,w) \in \Z \times  \gp{(x) \,\,\,\vee\,\,\,  z_{[1{\uparrow}d]}}{\quad}\vspace{1mm}$,
we have identified $\lsup{i}{w}  = \lsup{y^i}\!{w}$.
Bearing in mind that $\Z$ is an abbreviation for $y^{\Z} = C_\infty$,  we shall  write
$(\,\lsup{\Z}{x})$ for $\Z \times (x)  $, and $\lsup{\Z}{z}_{[1{\uparrow}d]}$  for
$\Z \times z_{[1{\uparrow}d]}$.

Let $N(S) \coloneqq \gp{(\,\lsup{\Z}{x})\,\,\, \vee\,\,\, \lsup{\Z}{z}_{[1{\uparrow}d]}}
{ (\,\lsup{i+1}{\overline x} \cdot \lsup{i}{u} \cdot \lsup{i}{x}  \mid i \in \Z)}.$
The shifting action of $C_\infty = \gp{y}{\quad}$ on $N(F)$ induces a $C_\infty$-action
on $N(S)$, and we find that we can identify $N(S) \rtimes C_\infty$ with~$S$.
Thus $N(S)$ is the image of $N(F)\vspace{1mm}$ in $S$.

For each $j\in \Z$, let $y$ act on
$\gp{ (\lsup{j}{x} ) \,\,\,\vee\,\,\,    \lsup{\Z}{z}_{[1{\uparrow}d]}}{\quad}$
by $\lsup{j}{x} \mapsto \lsup{j}{u} \cdot \lsup{j}{x}$, and,
for each $\lsup{i}{z}_\ast  \in  \lsup{\Z}{z}_{[1{\uparrow}d]}$,
$\lsup{i}{z}_\ast  \mapsto  \lsup{i+1}{z}_\ast$.
We find that we can make the identification
$$S = \gp{ (\,\lsup{j}{x}) \,\,\,\vee\,\,\,    \lsup{\Z}{z}_{[1{\uparrow}d]}}{\quad}\rtimes C_\infty.$$
Thus $\gp{ (\,\lsup{j}{x}) \,\,\,\vee\,\,\,    \lsup{\Z}{z}_{[1{\uparrow}d]}}{\quad}$
 can be viewed as a free factor of $N(F) \le F$ which
maps bijectively to $N(S)$ in $S$.   By varying $j$, we get a family of embeddings of $N(S)$ in $N(F) \le F$.
\hfill\qed
\end{Not}

\section{Potency of $S = \gp{ (x,y) \vee z_{[1{\uparrow}d]}}{[x,y]u}$}\label{sec:potent}

In this section, we prove a useful fact about $S$ and recall some related history.

\begin{Def}  A group $S$ is said to be \textit{potent} if, for each $s \in S -\{1\}$ and each
$m \in [2{\uparrow}\infty[$\,, there exists a homomorphism from $S$ to some finite group which sends
$s$ to some element of order exactly $m$; in this event, we also say that $S$ is a potent group.
\hfill\qed
\end{Def}

The following fact, whose  proof  invokes two results of  Allenby~\cite{Allenby}, will be
applied in Section~\ref{sec:HNN} to show that  certain two-relator groups are virtually torsion free.

\begin{Lem}\label{Lem:Allenby}  For $u \in \gp{z_{[1{\uparrow}d]}}{\quad}$,
the group  $S = \gp{(x,y)\vee z_{[1{\uparrow}d]}}{[x,y]u}$  is potent.
\end{Lem}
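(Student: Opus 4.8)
The plan is to exhibit $S$ as a group built from manifestly potent pieces by operations that preserve potency, then invoke Allenby's results. The key structural observation is already available in Notation~\ref{Not:gen}: writing $F = N(F) \rtimes C_\infty$ with $C_\infty = \gen{y}$ and $N(F)$ the normal closure of $(x) \vee z_{[1{\uparrow}d]}$, the relator $[x,y]u$ lies in $N(F)$ and its normal closure in $F$ is generated by the shifts $\lsup{i+1}{\overline x}\cdot\lsup{i}{u}\cdot\lsup{i}{x}$ for $i \in \Z$. Using these relations to eliminate the generators $\lsup{i}{x}$ for $i \ne 0$ (each $\lsup{i+1}{x}$ is expressed in terms of $\lsup{i}{x}$ and a shift of $u$, which only involves $z$-letters), one sees that for any fixed $j$ the subgroup $\gp{(\lsup{j}{x}) \vee \lsup{\Z}{z}_{[1{\uparrow}d]}}{\quad}$ is free and maps isomorphically onto $N(S)$, and $S = N(S) \rtimes C_\infty$. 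Thus $S$ is an ascending HNN-extension of a free group $N(S)$ — more precisely, $S$ is the mapping torus of the injective endomorphism of the free group $\gp{(\lsup{j}{x}) \vee \lsup{\Z}{z}_{[1{\uparrow}d]}}{\quad}$ described at the end of Notation~\ref{Not:gen}, sending $\lsup{j}{x} \mapsto \lsup{j}{u}\cdot\lsup{j}{x}$ and $\lsup{i}{z}_\ast \mapsto \lsup{i+1}{z}_\ast$.

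With this in hand, I would recall the two theorems of Allenby~\cite{Allenby} that are being invoked: first, that free groups are potent (indeed every free group is potent — this is classical, going back to work on residual properties of free groups), and second, Allenby's criterion that potency is inherited by a suitable class of HNN-extensions or, more likely in the form needed here, that an extension of a potent group by $\Z$ — equivalently a mapping torus, or a group of the form $K \rtimes C_\infty$ with $K$ potent and the associated endomorphism behaving well with respect to the finite quotients — is again potent. The role of the first Allenby result is to handle the base group $N(S)$, which we have identified as free; the role of the second is to promote potency from $N(S)$ to $S = N(S) \rtimes C_\infty$. One must check that the shifting/twisting automorphism-like data required by Allenby's hypotheses is satisfied: the $z$-part of the action is a genuine shift (hence permutes a free basis, the mildest possible situation), and the $x$-part multiplies $\lsup{j}{x}$ on the left by an element $\lsup{j}{u}$ of the subgroup generated by the $z$'s, which is exactly the kind of controlled, ``unipotent-like'' modification that survives passage to finite $p$-quotients.

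The step I expect to be the main obstacle is verifying that the specific endomorphism of the free group $N(S)$ underlying $S = N(S)\rtimes C_\infty$ satisfies the precise technical hypotheses of Allenby's potency-of-HNN-extensions theorem — in particular that the associated subgroups (the image of the endomorphism, or the edge groups in the HNN presentation) are finitely generated and that the isomorphism between them is ``compatible'' with enough finite quotients, so that given $s \in S - \{1\}$ and $m \ge 2$ one can build a finite quotient of $S$ detecting an element of order exactly $m$ at the image of $s$. Concretely, one writes $s = n y^k$ with $n \in N(S)$; if $k \ne 0$ one works in the quotient $S \onto C_\infty$ and uses that $C_\infty$ is potent (mapping onto $\Z/(mk')$ appropriately), while if $k = 0$ one has $s \in N(S) - \{1\}$ and must use potency of the free group $N(S)$ together with the HNN-compatibility to extend the detecting map from $N(S)$ to all of $S$; it is this last extension that Allenby's second theorem is designed to supply, and the work is in checking its applicability rather than in any computation. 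Once that is confirmed, the lemma follows immediately. \qed
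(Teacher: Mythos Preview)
Your approach has a genuine gap: the ``second Allenby theorem'' you are invoking does not exist in~\cite{Allenby}. Allenby's paper proves (a) that free products of potent groups are potent, and (b) that cyclically-pinched one-relator groups --- free products of two free groups amalgamating a non-trivial cyclic subgroup --- are potent. There is no result there about potency being inherited by HNN extensions, mapping tori, or semidirect products with~$\Z$. Your extension step, passing from a finite quotient of the free group $N(S)$ detecting an element of order~$m$ to a finite quotient of $S = N(S)\rtimes C_\infty$ doing the same, is exactly the hard part and is not automatic: the $y$-action on $N(S)$ need not descend to an arbitrary finite quotient of~$N(S)$. The paper's Remark~\ref{Rem:Allenby} shows how to carry this out directly with power-series methods, but that is real work, not a citation.

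The paper's proof avoids this entirely by using a different decomposition. When $u\ne 1$, one has
\[
S \;=\; \gp{x,y}{\quad}\mathop{\ast}\limits_{[y,x]\,=\,u}\gp{z_{[1{\uparrow}d]}}{\quad},
\]
an amalgam of two free groups along a non-trivial cyclic subgroup, and Allenby's result~(b) applies directly. When $u=1$, $S$ is the free product of $\Z^2$ with a free group, and one uses that free-abelian groups are potent together with Allenby's result~(a). Your semidirect-product viewpoint is correct as a description of~$S$, but it is the wrong decomposition for citing Allenby; the amalgamated-free-product viewpoint matches his theorems exactly.
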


\begin{proof}  In the case where
$u \ne 1$, $S = \gp{x,y}{\quad}\mathop{\ast}\limits_{[y,x] =   u} \gp{z_{[1{\uparrow}d]}}{\quad},$
and then $S$ is potent because
the free product of two free groups
amalgamating a non-trivial cyclic group  is potent~\cite[Section~4]{Allenby}.
(We can also use the latter result as a perverse reference for the fact that free groups are potent.)
Now we may assume that $u=1$, and, hence,
 $S$ is the free product of a rank-two,
 free-abelian group and a free group.
Observe that free-abelian groups are potent.
Recall that the free product of two
potent groups is potent~\cite[Theorem~2.4]{Allenby}.
(We can also use the latter result as a less perverse reference for the fact
 that free groups are potent.)
The result now follows.
\end{proof}

We dedicate the remainder of this section to setting the foregoing
results of Allenby into an historical context,
with particular emphasis on connections with the early research of Karl Gruenberg.
This digression will allow us
to explain how some of the techniques involved can be used to prove
the potency of $S$  directly.

\begin{Def}
Let us say that a group $S$ is $\rfpap$ if, for each prime $p$, $S$ is residually
a finite $p$-group, that is, $S$ embeds in a direct product of finite $p$-groups; in this event,
we also say that $S$ is an $\rfpap$-group.
\hfill\qed
\end{Def}

The history begins with free groups.

In 1935, Magnus obtained the following important result.
\newline\null\hskip 2cm (H1)  Every free group is residually torsion-free nilpotent.

Let us recall the method that Magnus used.

Let $R$ be an associative ring, let $X$ be a set
and
let $R\langle\langle X \rangle\rangle$ denote the ring of formal
power-series in the set $X$ of non-commuting  variables.  Each $f
\in R\langle\langle X \rangle\rangle$ has a unique expression as a
formal sum $\sum f_{[0{\uparrow}\infty[}$ where, for each
 $m \in [0{\uparrow}\infty[$\,, $ f_m$ is
homogeneous of degree $m$.  For each $n \in [0{\uparrow}\infty[$,
let $I_n$ denote  $$I_n(R,X) \coloneqq  \{f \in R\langle\langle X \rangle\rangle
\mid   f_m = 0 \text{ for all }m \in [0{\uparrow}n-1]\},$$ a closed ideal in
 $R\langle\langle X \rangle\rangle $.
For each $n \in [1{\uparrow}\infty[$,
let $U_n$ denote  $U_n(R,X) \coloneqq 1 + I_n$, a subgroup of the
group of units of $R\langle\langle X \rangle\rangle $.
If $m \in [1{\uparrow}n]$, then the natural map from $U_m$ to the group of units of
$R\langle\langle X \rangle\rangle/I_{n+1}$
has kernel $U_{n+1}$ and
image which can be denoted $1 + (I_m/I_{n+1})$. It follows that
$U_{n+1}$ is a normal subgroup of  $U_{1}$ and that
 $U_n/U_{n+1}$ is  a free $R$-module
which is central in $U_1/U_{n+1}$.

Consider now the case where $R = \Z$.  Here, $U_n/U_{n+1}$  is a free-abelian group,
and, hence,  $U_1(\Z,X)$ is
residually torsion-free nilpotent; see~\cite[IVa]{Magnus35}.
Also, by considering leading coefficients in $I_1$ for reduced expressions,
Magnus showed that
  $1+X$ freely generates a free subgroup of $U_1(\Z,X)$; see~\cite[I]{Magnus35}.
This completes the outline of Magnus' proof of (H1).

Gruenberg~\cite[p.~29]{Gruenberg} remarks that A.\ Mal'cev in 1949, M.\ Hall in 1950, and Takehasi in 1951,
independently found the following result.
\newline\null\hskip 2cm (H2) Every free group is   $\rfpap$.
\newline Let us recall how (H2) is related to power series.
Let $p$ be a prime number and let $R = \Z_p$,
the integers modulo
$p$. In the case where $X$ is finite,  $U_1(\Z_p,X)$ is residually a
finite $p$-group, since the $U_1/U_{n+1}$ are finite $p$-groups. In
the case where $X$ is infinite, we can retract $R\langle\langle X
\rangle\rangle$ onto $R\langle\langle Y \rangle\rangle$ for any
subset $Y$ of $X$, and it follows that  $U_1(\Z_p,X)$ is again
residually a finite $p$-group. Now, as Luis Paris pointed out to us,
a leading-coefficient argument again shows that
 $1+X$ freely generates a free subgroup of $U_1(\Z_p,X)$, and this proves (H2).

Since every non-trivial finite $p$-group has a central, hence normal, subgroup of order $p$,
it is not difficult to see the following result.
\newline\null\hskip 2cm (H3)  Every $\rfpap$-group  is potent.
\newline The result (H3) was presented implicitly by
Fischer\d1Karrass\d1Solitar in 1972; see~\cite[Proof of Theorem~2]{FKS}.
It was presented explicitly by Kim\d1McCarron in 1993; see~\cite[Lemma~2.2]{KimMcCarron}.
The facts (H2) and (H3) together prove the following result.
\newline\null\hskip 2cm (H4) Every free group is potent.
\newline The result (H4) was presented explicitly by Stebe in 1971 with a proof,  attributed to Passman, based on (H1);
see~\cite[Lemma 1]{Stebe}.  Independently, it was presented implicitly by
Fischer\d1Karrass\d1Solitar in 1972, with a proof based on (H2) and (H3); see~\cite[Proof of Theorem~2]{FKS}.

\begin{Rem}\label{Rem:Allenby} Let $d\in[1{\uparrow}\infty[$\,,\, let $u \in \gp{ z_{[1{\uparrow}d]}}{\quad}$, and let
$S\coloneqq \gp{ (x,y) \vee z_{[1{\uparrow}d]}}{[x,y]u}$.

We sketch an argument that shows how
 power series  can be used
to prove that $S$
is $\rfpap$.

Let $p$ be a prime number.

As in Notation~\ref{Not:gen}, $S =  N(S)  \rtimes C_\infty$ where $C_\infty = \gp{y}{\quad}$
and $N(S) = \gp{(x)\,\, \vee\,\,\, \lsup{\Z}{z}_{[1{\uparrow}d]} }{\,\,}$.  Here $y$ acts on $N(S)$ by
$x \mapsto \lsup{0}{u} \cdot x$,\,\, $\lsup{i}{z}_\ast \mapsto \lsup{i+1}{z}_\ast$.
Let $b$ be a symbol and choose an identification of the countably infinite set
$(x) \vee \lsup{\Z}{z}_{[1{\uparrow}d]} $  with the subset $1 + b_{[0{\uparrow}\infty[}$
of $\Z_p\langle\langle b_{[0{\uparrow}\infty[}\rangle\rangle\vspace{1mm}$.  Hence, by the $p$-analogue of
Magnus' result, we get an embedding of
$N(S)$ in the group of units of $\Z_p\langle\langle b_{[0{\uparrow}\infty[}\rangle\rangle\vspace{1mm}$.  The action of
$y$ on $N(S)$ extends uniquely to a
continuous automorphism of $\Z_p\langle\langle b_{[0{\uparrow}\infty[}\rangle\rangle\vspace{1mm}$, and we can form the
skew-group-ring, or skew Laurent-polynomial ring,
$(\Z_p\langle\langle b_{[0{\uparrow}\infty[}\rangle\rangle)[C_\infty]\vspace{1mm}$.  It is then
clear that $S$ embeds in the group of units of
$(\Z_p\langle\langle b_{[0{\uparrow}\infty[}\rangle\rangle)[C_\infty]\vspace{1mm}$.

Consider any $n \in [1{\uparrow}\infty[$.

Let $q\coloneqq p^n$ and let $C_{q^2}\coloneqq \langle y \mid y^{q^2} \rangle$.

We have the closed, $y$-invariant ideal
$I_{n+1} = I_{n+1}(\Z_p, b_{[0{\uparrow}\infty[})$ of
 $\Z_p\langle\langle b_{[0{\uparrow}\infty[}\rangle\rangle\vspace{1mm}$, and we can form
the skew-group-ring $(\Z_p\langle\langle
b_{[0{\uparrow}\infty[}\rangle\rangle/I_{n+1})[C_\infty]\vspace{1mm}$.
It is not difficult to check that the image of $(N(S))^q$ is
trivial.

Let $J_{n+1}$ denote the (closed, $y$-invariant) ideal of
 $\Z_p\langle\langle b_{[0{\uparrow}\infty[}\rangle\rangle$
generated by $$I_{n+1} \cup \{ \lsup{i}{z}_\ast - \lsup{i+q}{z}_\ast
\mid \lsup{i}{z}_\ast \in \lsup{\Z}{z}_{[1{\uparrow}d]}\}.$$ Again,
we can form the skew-group-ring
$(\Z_p\langle\langle
b_{[0{\uparrow}\infty[}\rangle\rangle/J_{n+1})[C_\infty]\vspace{1mm}$.
It is not difficult to check  that the $y^{q}$-action fixes the
image $\lsup{\Z_q}{z}_{[1{\uparrow}d]}$ of
$\lsup{\Z}{z}_{[1{\uparrow}d]}$, and that the $y^{q^{2}}$-action
fixes the image of~$x$. Thus we can form the skew-group-ring
$(\Z_p\langle\langle
b_{[0{\uparrow}\infty[}\rangle\rangle/J_{n+1})[C_{q^2}]\vspace{1mm}$
and find that the image of $S$ is a finite $p$-group. By varying
$n$, we see that $S$ is residually a finite $p$-group. By
varying~$p$, we see that $S$ is $\rfpap$. By (H3), $S$ is potent.
\hfill\qed
\end{Rem}

In 1957, Gruenberg proved the following two results; see~\cite[Theorem~2.1(i) and
Part (iii) of the Corollary  on p.44]{Gruenberg}.
\newline\null\hskip 2cm (H5) Every residually torsion-free nilpotent group is $\rfpap$.
\newline\null\hskip 2cm (H6) The free product of any family of $\rfpap$-groups is  $\rfpap$.
\newline Each of these sheds important light on (H2).

In 1981, Allenby obtained the following result; see~\cite[Theorem~2.4]{Allenby}.
\newline\null\hskip 2cm   (H7) The free product of any family of potent groups is potent.
\newline  This sheds light on (H4).

\bigskip

We next consider a  class of groups which contains $S$ when $u\ne 1$.

Let $A$ and $B$ be free groups, let $a \in A -\{1\}$,
and let $b \in B-\{1\}$.  The amalgamated free product $A \mathop{\ast}\limits_{a=b} B$
is called a \textit{cyclically-pinched one-relator group}.

In 1968, G.~Baumslag used power series to obtain  the following result; see~\cite[Theorem~1]{Baumslag68}.
\newline\null\hskip 2cm (H8) If  $a$ is not a proper power, and $B$ is cyclic,
then  $A \mathop{\ast}\limits_{a=b} B\vspace{-1mm}$
is residually\linebreak \null\hskip 3cm torsion-free nilpotent.
\newline In particular, by (H5), $A \mathop{\ast}\limits_{a=b} B$  is then $\rfpap$.
In 1998, Kim and Tang used this to
obtain the following result; see~\cite[Corollary~3.6]{KimTang}.
\newline\null\hskip 2cm  (H9) $A \mathop{\ast}\limits_{a=b} B$ is $\rfpap$ if and only if $a$ or
 $b$ is not a proper power.
\newline This gives the earliest proof we know of that $S$ is $\rfpap$, since
the case where $u \ne 1$ follows from (H9) with $A = \gp{x,y}{\quad}$, $B = \gp{z_{[1{\uparrow}d]}}{\quad}$,
$a = [y,x]$, $b=u$, since $[y,x]$ is not a proper power,
while the case where $u=1$ follows from (H6) and the fact that free-abelian groups are $\rfpap$.

 In 1981, Allenby obtained the following result; see~\cite[Section~4]{Allenby}.
\newline\null\hskip 1.8cm  (H10) $A \mathop{\ast}\limits_{a=b} B$ is potent.
\newline We have seen that (H10) and (H7) imply that $S$ is potent.

\bigskip

Finally, let us mention fundamental groups of closed surfaces.

In 1968, Chandler~\cite{Chandler} obtained the following result.
\newline\null\hskip 1.8cm  (H11) For every closed surface
except the projective plane and the Klein bottle,
\linebreak\null\hskip 3cm the
fundamental group is residually torsion-free nilpotent and, hence,
\linebreak\null\hskip 3cm by (H5), is $\rfpap$.

The following is a consequence of (H10).
\newline\null\hskip 1.8cm  (H12) For every closed surface except the projective plane, the
fundamental group \linebreak\null\hskip 3cm  is potent.

\section{Hempel relators}\label{sec:HR}

In this section, we  introduce Hempel relators and show how to find them.

\begin{Defs} Let $F$ be a group.  Let $r \in F$.
If $r \in F -\{1\}$ and $\gen{r}$ is contained in a unique maximal infinite, cyclic subgroup $C$ of $F$,
then there exist a unique $m\in [1{\uparrow}\infty[\,,$ and a unique $s\in F,$ such that
$m =  [C{:}\gen{g}]$,  $\gen{s} = C$ and $s^m = r$;
we then define  $\sqrt[F]{r} \coloneqq s$ and $\log_Fr \coloneqq m$,
and say that $s$ is \textit{the root} of $r$ in $F$.
 For $r = 1$, we define $\sqrt[F] 1 \coloneqq  1$ and $\log_F1 = \infty$.
In all other cases we say that $r$ \textit{does not have a root} in $F$.
We say that  $F$ \textit{has roots} if every element of $F$ has a root in $F$.
\hfill\qed
\end{Defs}

\begin{Lem}\label{Lem:root} With {\normalfont Notation~\ref{Not:gen}}, there is a
free generating sequence $(x',y')$ of $\gp{x,y}{\quad}$ such that $[x',y'] = [x,y]$,
and the $y'$-exponent sum of  $r$ with respect to $(x',y')\vee z_{[1{\uparrow}d]}$ is zero,
and the $x'$-exponent sum of   $r$ with respect to $(x',y')\vee z_{[1{\uparrow}d]}$ is nonnegative.

Moreover,  $S$ has roots and any element of the free subgroup $N(S)$
has the same root in both $S$ and   $N(S)$.
\end{Lem}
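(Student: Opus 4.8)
The plan is to treat the two assertions in turn.

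\emph{The free generating sequence $(x',y')$.} I would use the standard fact that the subgroup of $\operatorname{Aut}\gp{x,y}{\quad}$ fixing $[x,y]$ surjects onto $SL_2(\Z)$ under the map $\operatorname{Aut}\gp{x,y}{\quad}\onto GL_2(\Z)$: the automorphisms $x\mapsto x,\, y\mapsto yx$ and $x\mapsto xy,\, y\mapsto y$ each fix $[x,y]$, and their images in $GL_2(\Z)$ generate $SL_2(\Z)$. Let $(a,b)\in\Z^2$ be the coefficient vector of $\bar r$ on $(\bar x,\bar y)$ in the abelianization of $F$. By the Euclidean algorithm there is an element of $SL_2(\Z)$ carrying the column $(a,b)$ to $(g,0)$ with $g\coloneqq\gcd(a,b)\ge 0$ (with $g=0$ if $a=b=0$). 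Realizing the corresponding $[x,y]$-fixing automorphism of $\gp{x,y}{\quad}$ as a composite $\phi$ of the two automorphisms above and their inverses, extending $\phi$ over $F$ by fixing $z_{[1{\uparrow}d]}$, and setting $(x',y')\coloneqq(\phi(x),\phi(y))$, one gets $[x',y']=\phi([x,y])=[x,y]$, and, by tracking the change of coordinates on the abelianization, the $y'$-exponent sum of $r$ with respect to $(x',y')\vee z_{[1{\uparrow}d]}$ is $0$ while the $x'$-exponent sum is $g\ge 0$. This part is routine.

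\emph{$S$ has roots.} Write $A\coloneqq\gp{x,y}{\quad}$ and $B\coloneqq\gp{z_{[1{\uparrow}d]}}{\quad}$. The structural input is that $C\coloneqq\gen{[y,x]}$ is malnormal in $A$: since $[y,x]$ is not a proper power in the free group $A$ and no nontrivial element of a free group is conjugate to its inverse, $\lsup{g}{C}\cap C\ne\{1\}$ forces $g\in C$. If $u=1$ (which includes $d=0$, where $S\simeq\Z^2$), then $S$ is the free product $\Z^2\ast B$; if $u\ne 1$, then $S=A\mathop{\ast}\limits_{[y,x]=u}B$, an amalgam of the two free groups $A,B$ over $C=\gen{[y,x]}=\gen{u}$ with $C$ malnormal in $A$. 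In either case I would run the usual Bass--Serre analysis on the associated tree $T$: $S$ is torsion free (torsion in a free product or in an amalgam of torsion-free groups is conjugate into a factor), and the centralizer in $S$ of every $s\ne 1$ is free abelian --- infinite cyclic, with the single exception that when $u=1$ the centralizer of an element conjugate into the $\Z^2$ factor is free abelian of rank $2$. Granting this, since free abelian groups have roots and any maximal infinite cyclic subgroup of $S$ containing a given $s\ne1$ is contained in, and maximal in, the centralizer $C_S(s)$, it follows that $s$ lies in exactly one maximal infinite cyclic subgroup of $S$; that is, $S$ has roots.

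\emph{The centralizer claim.} This is the technical point, proved via fixed subtrees of $T$. For $s$ hyperbolic, $C_S(s)$ stabilizes the axis $L$ of $s$; the pointwise stabilizer of $L$ is the intersection of the edge groups along $L$, which is trivial (trivially so when $u=1$; when $u\ne 1$ because $L$ passes through a vertex of $A$-type flanked by two distinct edges whose stabilizers are distinct conjugates of $C$ inside a copy of $A$, meeting trivially by malnormality), so $C_S(s)$ embeds in the infinite cyclic group of translations of $L$. For $s$ elliptic, $s$ is conjugate into a vertex group $V$ (a copy of $A$, of $B$, or of $\Z^2$); one checks that $\operatorname{Fix}(s)\subseteq T$ is a single vertex unless $s$ lies in a conjugate of $C$, in which case it is a finite star (here malnormality of $C$ in $A$, together with the computation of the normalizer $N_B\gen{u}=\gen{\sqrt[B]{u}}$, bounds the number of edges of $T$ fixed at each vertex); its centroid is then a vertex fixed by everything that commutes with $s$, so $C_S(s)$ is contained in a single conjugate of $V$, where it equals the centralizer of $s$ there and hence is free abelian.

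\emph{Equality of roots, and the obstacle.} For the last clause: $N(S)$ is free by Notation~\ref{Not:gen}, hence has roots, and since $S/N(S)\simeq C_\infty$ is torsion free, any infinite cyclic subgroup of $S$ meeting $N(S)$ nontrivially lies inside $N(S)$; hence for $s\in N(S)-\{1\}$ the unique maximal infinite cyclic subgroup of $S$ containing $s$ lies in $N(S)$ and is therefore also the unique such subgroup of $N(S)$, so the root of $s$ is the same whether computed in $S$ or in $N(S)$. I expect the main obstacle to be the elliptic case of the centralizer computation when $u\ne1$ and $u$ is a proper power in $B$: then $C$ is not malnormal in $B$, the fixed trees are genuinely nontrivial, and one must check with care that they are nevertheless finite stars whose centroids confine $C_S(s)$ to a single vertex group.
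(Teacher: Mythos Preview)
Your argument is correct, and the first part (finding $(x',y')$) matches the paper's proof up to packaging: the paper writes out the same elementary transvections $x\mapsto xy^{\pm1}$, $y\mapsto yx^{\pm1}$ and runs the Euclidean algorithm by hand rather than invoking $SL_2(\Z)$.

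For the ``$S$ has roots'' assertion, however, you take a genuinely different and much heavier route. You analyze $S$ as a free product or amalgam, invoke malnormality of $\gen{[y,x]}$ in $\gp{x,y}{\quad}$, and classify centralizers via fixed subtrees in the Bass--Serre tree, with a careful case split according to whether $u$ is a proper power. This is all correct and yields the stronger structural fact that every centralizer in $S$ is free abelian.

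The paper's proof is a two-line bootstrap that you missed: it reuses the \emph{first} part of the lemma to prove the second. Given any $s\in S-\{1\}$, lift $s$ to $F$ and apply part one to obtain a basis $(x',y')$ with $[x',y']=[x,y]$ in which the $y'$-exponent sum of the lift is zero; then $s$ lies in the corresponding $N(S)$, which is free, so $s$ has a root there. Since $S/N(S)\simeq C_\infty$ is torsion free, every cyclic subgroup of $S$ containing $s$ already lies in $N(S)$, so the $N(S)$-root is the $S$-root. Your final paragraph is exactly this torsion-free-quotient argument, but you apply it only to the \emph{fixed} $N(S)$; the paper observes that by varying the basis one can place \emph{any} nontrivial element of $S$ inside \emph{some} $N(S)$, which immediately gives roots in all of $S$ with no Bass--Serre theory, no malnormality, and no case analysis on $u$. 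What you gain from your approach is the centralizer description; what the paper's approach buys is brevity and avoidance of the delicate elliptic case you flagged as the main obstacle.
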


\begin{proof}
Let $a$ and $b$ respectively
denote the $x$- and $y$-exponent sums of $r$ with respect to $(x,y)\vee z_{[1{\uparrow}d]}$.
We want $a\ge0$ and $b = 0$.

Replacing   $x$ with $xy^{\pm1}$ fixes  $[x,y]$   and changes
$(a,b)$ to $(a,b \pm a)$.
Replacing   $y$ with $yx^{\pm1}$
fixes $[x,y]$  and changes
 $(a,b)$ to   $(a \pm b,b)$.
If $ab \ne 0$, then one or more of these four operations reduces  $\abs{a}+\abs{b}$.
By repeating such operations, we can arrange that $ab=0$.
If $a=0$, then we can alter $(0,b)$ to $(b,b)$ and then to $(b,0)$; thus we may assume that $b=0$.
Finally, if $a < 0$, we can successively
alter $(a,0)$ to $(a,a)$, $(0,a)$, $(-a,a)$, and $(-a,0)$.  Thus we may also assume that
$a \ge 0$.

We next show that $r \text{\,mod}[x,y]u$ has a root in $S$.
We may assume that $r \text{\,mod}[x,y]u$ lies in $ S - \{1\}$.
Let $(x',y')$ be a free generating sequence of $\gp{x,y}{\quad}$
such that $[x',y'] = [x,y]$,
and the $y'$-exponent sum of $r$ with respect to $(x',y')\vee z_{[1{\uparrow}d]}$ is zero.
To simplify notation, we  forget the original $(x,y)$, use $(x',y')$ as the new
generating sequence, and name it  $(x,y)$.
Thus we may assume that $r\text{\,mod}[x,y]u$
lies in $N(S)$, a free subgroup of $S$.  Hence $r\text{\,mod}[x,y]u$  has a root in
$N(S)$.  Any cyclic subgroup of $S$ containing $r\text{\,mod}[x,y]u$
maps to a finite, hence trivial, subgroup in $S/N(S) = C_\infty = \gp{y}{\quad}$.
 Hence, any cyclic subgroup of $S$ containing $r\text{\,mod}[x,y]u$
lies in $N(S)$.  Thus the root of $r\text{\,mod}[x,y]u$ in the free group $N(S)$ is a root in~$S$.
\end{proof}

\begin{Def} With {\normalfont Notation~\ref{Not:gen}}, let
$X_1 \coloneqq (\,\lsup{1}{x})\,\, \vee \,\,\lsup{[0{\uparrow}\infty[\,}{z}_{[1{\uparrow}d]}$.
We say that $r$ is a \textit{Hempel relator} for the presentation
  $\prs{(x,y)\vee z_{[1{\uparrow}d]}}{[x,y]u}$
if the following hold.
\begin{enumerate}[(R1)]
\vskip-0.6cm \null
\item $r  \in \gp{ X_1   }{\quad} \le N(F) \le F$,
\vskip-0.6cm \null
\item In
$ \gp{ X_1  }{\quad}$, $r$
is not conjugate to any element of $\gen{\,\lsup{0}{\overline u} \cdot \lsup{1}{x}}$ .
\vskip-0.6cm \null
\item With respect to  $X_1 \vspace{1mm} $,   $r$~is cyclically reduced.
\vskip-0.6cm \null
\item With respect to  $X_1 \vspace{1mm} $,
$r$ \textit{involves} some element of $\lsup{0}{z}_{[1{\uparrow}d]}$, that is, $r$ does not lie in the free factor
$\gp{ (\,\lsup{1}{x})\,\,  \vee\,\, \lsup{[1{\uparrow}\infty[\,}{z}_{[1{\uparrow}d]} }{\quad}$.\hfill\qed
\end{enumerate}
\end{Def}

We now want to show that for our  purposes we can assume that $r$ is a Hempel relator.

\begin{Lem}\label{Lem:Hempelments} With {\normalfont Notation~\ref{Not:gen}},
there exists an element $w$ of $F$, an element $v $ of $\gen{\lsup{F}{([x,y]u)}}$, and
 an automorphism $\alpha$ of $\gp{x,y}{\quad}$ which fixes $[x,y]$,
such that  $r'\coloneqq (v)(\lsup{w\alpha}{r})$
is either a non-negative power of $x$ or
a  Hempel relator for $\prs{(x,y)\vee z_{[1{\uparrow}d]}}{[x,y]u}$.

Here,  $G \simeq \gp{(x,y)\vee z_{[1{\uparrow}d]}}{[x,y]u,\, r'}$ and $\log_Fr' = \log_S (r \text{\normalfont \,mod}[x,y]u)$.
\end{Lem}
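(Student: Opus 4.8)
The plan is to work inside the normal subgroups $N(F)\le F$ and $N(S)\le S$ of {\normalfont Notation~\ref{Not:gen}}, exploiting that $N(S)$ is the fundamental group of a ``line of free groups''. Writing $H_i\coloneqq\gp{(\lsup{i}{x})\vee\lsup{i}{z}_{[1{\uparrow}d]}}{\quad}$, we have $N(F)=\mathop{\ast}_{i\in\Z}H_i$, and $N(S)$ is obtained from this by identifying, for each $i$, the free generator $\lsup{i+1}{x}$ of $H_{i+1}$ with the element $\lsup{i}{u}{\cdot}\lsup{i}{x}$ of $H_i$, which is not a proper power of $H_i$ since its $\lsup{i}{x}$-exponent sum is~$1$; thus $N(S)$ is $\pi_1$ of a line graph of groups with vertex groups the $H_i$ and infinite cyclic edge groups, on which $y$ acts by translation. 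The three permitted moves leave the isomorphism type of $G$, and the $S$-conjugacy class of the image of $r$, unchanged: an automorphism $\alpha$ of $\gp{x,y}{\quad}$ fixing $[x,y]$, extended to $F$ by the identity on $z_{[1{\uparrow}d]}$, fixes $[x,y]u$ and thus induces isomorphisms of the corresponding quotients; while conjugation by $w\in F$ and left multiplication by $v\in\gen{\lsup{F}{([x,y]u)}}=\ker(F\onto S)$ do not change $\gen{\lsup{F}{\{[x,y]u,\,r\}}}$. In particular, for any admissible $r'$ we get $\log_S(r\text{\,mod}[x,y]u)=\log_S(r'\text{\,mod}[x,y]u)$ automatically, so only $\log_Fr'=\log_S(r'\text{\,mod}[x,y]u)$ will need to be checked.

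First I would apply Lemma~\ref{Lem:root}: after replacing $r$ by $\alpha(r)$ for a suitable such $\alpha$, we may assume $r\in N(F)$ and that its $x$-exponent sum $a$ is non-negative. Let $\bar r\in N(S)\le S$ be the image of $r$, and split according to whether $\bar r$ is $S$-conjugate to a power of $\lsup{0}{x}$, the image of $x$. If it is, then comparing $x$-exponent sums forces that power to be $a$; lifting an $S$-conjugator to $w\in F$ so that $\lsup{w}{r}$ and $x^{a}$ have the same image in $S$, and putting $v\coloneqq x^{a}(\lsup{w}{r})^{-1}\in\ker(F\onto S)$, we get $r'=x^{a}$, a non-negative power of $x$; here $\log_Fx^{a}=a=\log_S(\lsup{0}{x}^{a})$, since $x$ is a free generator of $F$ and $\lsup{0}{x}$ is not a proper power of $N(S)$, hence --- by the last clause of Lemma~\ref{Lem:root} --- not of $S$.

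The substantive case is when $\bar r$ is not $S$-conjugate to any power of $\lsup{0}{x}$; in particular $\bar r\ne1$. Here, by Bass--Serre theory for the above decomposition (or a direct normal-form analysis in the amalgams $\pi_1([p{\uparrow}q])$), the segments $[p{\uparrow}q]\subseteq\Z$ with $\bar r$ conjugate into $\pi_1([p{\uparrow}q])$ form a non-empty family closed under intersection, hence with a least member $[p_0{\uparrow}q_0]$; applying the translation $y^{-p_0}$, which is conjugation in $S$, I may take $p_0=0$. Identifying the image in $N(S)$ of the free factor $\gp{X_1}{\quad}\le N(F)$ with the free factor $\pi_1([0{\uparrow}\infty[)$ of $N(S)$ --- which contains $\pi_1([0{\uparrow}q_0])$, and hence $H_0$ --- I would choose a representative $g$ of the conjugacy class of $\bar r$ lying in $\pi_1([0{\uparrow}q_0])$, cyclically reduce $g$ with respect to $X_1$, lift the result to $r'\in\gp{X_1}{\quad}\le N(F)\le F$, and absorb the conjugators into a single $w$ and the residual difference in $S$ into a single $v\in\ker(F\onto S)$, as before. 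Then (R1) and (R3) hold by construction; (R4) holds since if $r'$ were conjugate, within $\gp{X_1}{\quad}$, into the free factor $\gp{(\lsup{1}{x})\vee\lsup{[1{\uparrow}\infty[}{z}_{[1{\uparrow}d]}}{\quad}$, then $\bar r$ would be conjugate into some $\pi_1([1{\uparrow}N])$, contradicting minimality of $[0{\uparrow}q_0]$; and (R2) holds since if $r'$ were conjugate, within $\gp{X_1}{\quad}$, to a power of $\lsup{0}{\overline u}{\cdot}\lsup{1}{x}=\lsup{0}{x}$, then $\bar r$ would be $S$-conjugate to a power of $\lsup{0}{x}$, against the case hypothesis. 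Finally, $\gp{X_1}{\quad}$ being a free factor of the free group $N(F)$ and $F/N(F)$ being torsion-free, the root of $r'$ in $F$ lies in, and equals its root in, $\gp{X_1}{\quad}$; transporting through $\gp{X_1}{\quad}\xrightarrow{\ \sim\ }\pi_1([0{\uparrow}\infty[)$ and invoking the last clause of Lemma~\ref{Lem:root} identifies it with the root of $\bar{r'}$ in $S$, so $\log_Fr'=\log_S(r'\text{\,mod}[x,y]u)$.

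I expect the main obstacle to be this last case: making precise that the least segment exists and transforms equivariantly under translation --- the step that genuinely uses the line-of-groups structure of $N(S)$ (equivalently, the rewriting procedure of Section~\ref{sec:basic}) --- that cyclic reduction with respect to $X_1$ cannot destroy (R4), and that all the conjugations and rewritings can be packaged into one $w\in F$ together with one $v\in\gen{\lsup{F}{([x,y]u)}}$. Once these are in hand, the isomorphism $G\simeq\gp{(x,y)\vee z_{[1{\uparrow}d]}}{[x,y]u,\,r'}$ is immediate from the three moves, and the $\log$ identity has been handled case by case above.
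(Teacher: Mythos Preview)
Your proof is correct and reaches the same conclusion, but the route is genuinely different from the paper's. The paper never invokes the line-of-groups structure of $N(S)$ or Bass--Serre theory. Instead, after the initial reduction via Lemma~\ref{Lem:root} (which you also use), it works entirely with the family of free bases $(\,\lsup{j}{x})\vee\lsup{\Z}{z}_{[1{\uparrow}d]}$ of $N(S)$ for varying $j$: for each $j$ it writes the cyclically reduced form of $\bar r$ and records the $z$-support $[\mu_j{\uparrow}\nu_j]$, then proves by a direct combinatorial minimization that some $k$ satisfies $\mu_{k+1}=k$, which after shifting by $y^{-k}$ gives exactly (R4). Condition (R2) is then read off from the case hypothesis. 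This is elementary and self-contained, needing nothing beyond rewriting in free groups.

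Your argument replaces that computation by the structural observation that the segments $[p{\uparrow}q]$ with $\bar r$ conjugate into $\pi_1([p{\uparrow}q])$ have a least member. This is cleaner conceptually, and once the minimal segment is in hand, (R1)--(R4) and the $\log$ identity fall out more transparently than in the paper. The cost is that the ``closed under intersection'' claim is not free: it relies on the fact that, under the case hypothesis, $\bar r$ is not $N(S)$-conjugate into any edge group $\gen{\lsup{i}{x}}$, so that in the Bass--Serre tree either $\bar r$ is elliptic with a single fixed vertex, or hyperbolic with a bounded axis-projection, and in either case the valid segments form a filter with a minimum. You flag this as the expected obstacle, and it is indeed the one point where your sketch needs a sentence of justification; once supplied, the argument is complete. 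Neither approach is strictly more general here, but yours would adapt more readily to other tree-of-groups situations, while the paper's stays closer to the explicit rewriting machinery set up in Section~\ref{sec:basic}.
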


\begin{proof} We shall successively alter $r$ and,
to avoid extra notation, we shall use the same symbol $r$ to denote the altered element of $F$ at each stage.

 We first alter $r$ by  automorphisms of
$\gp{x,y}{\quad}$ which fix   $[x,y]$.
By Lemma~\ref{Lem:root},
we may assume that  the $y$-exponent sum of $r$ with respect to $(x,y)\vee z_{[1{\uparrow}d]}$ is zero,
and the $x$-exponent sum of   $r$ with respect to $(x,y)\vee z_{[1{\uparrow}d]}$ is nonnegative.
Hence $r \text{\normalfont \,mod}[x,y]u$ lies in the free subgroup $N(S)$.

For each $j \in \Z$, we can view $(\,\lsup{j}{x}) \vee \lsup{\Z}{z}_{[1{\uparrow}d]}$
as a free generating sequence of $N(S)$ and
express $r \text{\normalfont \,mod}[x,y]u$ as a word therein and lift the word back to a new
element of $N(F)$; this multiplies $r$ by an element of $\gen{\lsup{F}{([x,y]u)}}$.  We then
reduce the expression cyclically, which corresponds to conjugating $r$ in $F$.

If $r= \lsup{j}{x}^m$ for some $m \in \Z$, then $m \ge 0$.
By replacing $r$ with $\lsup{y^{-\!j}}{r}$, we may assume that $r = x^m$,
and the desired conclusions hold.

Thus, we may assume that, for each $j \in \Z$, the
cyclically reduced expression for $r$ in
$(\,\lsup{j}{x}) \,\,\vee\,\, \lsup{\Z}{z}_{[1{\uparrow}d]}$ involves some element of
 $\lsup{\Z}{z}_{[1{\uparrow}d]}$, and, hence,
 there is a unique smallest non-empty interval
 $[\mu_j{\uparrow}\nu_j]$ in $\Z$ such that the  cyclically reduced expression for $r$
in
$(\,\lsup{j}{x}) \vee \lsup{\Z}{z}_{[1{\uparrow}d]}$
is a word in $(\,\lsup{j}{x}) \,\,\vee\,\, \lsup{[\mu_j{\uparrow}\nu_j]}{z}_{[1{\uparrow}d]}$.

We claim that for some $k \in \Z$, $\mu_{k+1} = k$.

To pass from $(\,\lsup{j+1}{x})\,\, \vee\,\,
\lsup{\Z}{z}_{[1{\uparrow}d]} $ to $(\,\lsup{j}{x})\,\, \vee\,\,
\lsup{\Z}{z}_{[1{\uparrow}d]} $, we replace $\lsup{j+1}{x}$ with $
\lsup{j}{u} \cdot \lsup{j}{x}$ and reduce cyclically. In passing
from
 $[\mu_{j+1}{\uparrow}\nu_{j+1}]$ to $[\mu_{j}{\uparrow}\nu_{j}]$, we may
add $j$, we may delete some values, and we then take the convex hull
in $\Z$. By repeating this change sufficiently often, we find that,
for $j<<0$, $j \le \mu_j$.  Among all $j \in \Z$ such that $j \le
\mu_j$, let us choose one that minimizes $\nu_j - \mu_j \in
[0{\uparrow}\infty[$.

To pass from $(\,\lsup{j}{x}) \,\,\vee \,\,\lsup{\Z}{z}_{[1{\uparrow}d]} $
to $(\,\lsup{j+1}{x}) \,\,\vee \,\, \lsup{\Z}{z}_{[1{\uparrow}d]} $,
we replace $\lsup{j}{x}$
with $ \lsup{j}{\overline u} \cdot \lsup{j+1}{x}$ and reduce cyclically.
In passing from $[\mu_j{\uparrow}\nu_j] \subseteq
[j{\uparrow}\infty[$ to $[\mu_{j+1}{\uparrow}\nu_{j+1}]$, we may add
$j$, we may delete some values, and we then take the convex hull in
$\Z$. Hence, $[\mu_{j+1}{\uparrow}\nu_{j+1}] \subseteq
[j{\uparrow}\nu_j]$; that is,
 $\mu_{j+1} \ge j$ and $\nu_{j+1} \le \nu_j$.

If $\mu_{j+1} = j$, we take $k = j$.

To prove the claim, it remains to consider the case where
$\mu_{j+1} \ge j+1$.  By the minimality assumption, $\mu_{j+1}
\le \mu_j$.  Hence, $j+1 \le \mu_{j+1} \le \mu_j$. Since  $j+1
\le \mu_j$, when we replace  $\lsup{j}{x}$ with $
\lsup{j}{\overline u} \cdot \lsup{j+1}{x}$ the occurrences of
$\lsup{j}{\overline u}$ survive unaffected by (cyclic) reduction.
Since $j+1 \le \mu_{j+1}$, we see that $\lsup{j}{\overline u} = 1$.
Hence $u=1$ and, hence, $\mu_j$ does not depend on $j$.  We take
$k = \mu_0$, and then $\mu_{k+1} = \mu_0 = k$.

In all cases, then, we have some $k \in \Z$ such that $\mu_{k+1} = k$.  By replacing $r$
with $\lsup{y^{-\!k}}\!{r}$,
we may arrange that $k = 0$.   Now $r$ has become a Hempel relator.
Notice that $r$ is not conjugate to a power of $\,\lsup{0}{\overline u} \cdot \lsup{1}{x}$ in
$ \gp{ (\,\lsup{1}{x})\,\, \vee \,\,\lsup{[0{\uparrow}\infty[\,}{z}_{[1{\uparrow}d]} }{\quad}$
because
$r \text{\normalfont \,mod}[x,y]u$ is not conjugate to a power of $\lsup{0}{x}$
in $ \gp{ (\,\lsup{1}{x})\,\, \vee \,\,\lsup{\Z}{z}_{[1{\uparrow}d]} }{\quad}$.
\end{proof}

In Examples~\ref{Exs:vor}, we shall see that, in
the case where $r \in \gen{x}$,  $G$ is virtually  one-relator;  we consider such groups
 to be well understood.

\section{HNN decomposition, local indicability and torsion}\label{sec:HNN}

In this section, we shall extend three types of results of Hempel and Howie,
namely the
HNN decomposition, the local indicability, and the analysis of torsion.

\begin{Not} We shall use a left-right twisting of the notation
in~\cite[Examples~I.3.5(v)]{DicksDunwoody89},
and write $G_{v}\mathop{\ast}\limits_{G_e} t_e$ to denote an HNN extension,
where it is understood that $G_v$ is a group,
$G_e$~is a  subgroup  of~$G_v$,
there is specified some injective homomorphism
$\overline t_e \colon\!  G_e \to G_v$, $g \mapsto \, \lsup{\overline t_e}{g}$,
and the associated HNN extension is
$$ G_{v}\mathop{\ast}\limits_{G_e} t_e \coloneqq (G_{v}
 \ast \gp{t_e}{\quad})/\normgen{\, \{\, \overline t_e{\cdot}\overline g{\cdot}t_e{\cdot}\lsup{\overline t_e}
g \mid g \in
  G_e\}\,}.$$
If $G = G_{v}\mathop{\ast}\limits_{G_e} t_e \coloneqq G_{v}$,
then the Bass-Serre $G$-tree has vertex set $G/G_v$ and edge set $G/G_e$,
with $gG_e$ joining $gG_v$ to $gt_eG_v$, for each $g \in G$.   \hfill\qed
\end{Not}

In the case where $r$ is a Hempel relator we shall now see that we
get an HNN extension of a one-relator group over a free group.

\begin{Not}\label{Not:HNN}
With {\normalfont Notation~\ref{Not:gen}}, suppose that $r$ is a Hempel relator
for the presentation  $\prs{(x,y)\vee z_{[1{\uparrow}d]}}{[x,y]u}$; thus,
for $X_1\coloneqq  (\,\lsup{1}{x})\,\, \vee\,\,  \lsup{[0{\uparrow}\infty[\,}{z}_{[1{\uparrow}d]} $,
the following hold.
\begin{enumerate} [(R1)]
\vskip-0.7cm \null
\item\label{it:1} $r  \in \gp{X_1}{\quad} \le N(F) \le F$.
\vskip-0.7cm \null
\item\label{it:2}  In $ \gp{X_1}{\quad}$,
 $r$ is not conjugate to any element of $\gen{x}$, where $x \coloneqq \lsup{0}{\overline u} \cdot \lsup{1}{x}$.
In particular,  $r \ne 1$ and, hence, $d\ne 0$.
\vskip-0.7cm \null
\item \label{it:3}  With respect to $X_1$, $r$~is cyclically reduced.
\vskip-0.7cm \null
\item \label{it:5} With respect to $X_1$,  $r$  involves  some element of~$\lsup{0}{z}_{[1{\uparrow}d]}$.
\vskip-0.7cm \null
\item\label{it:4}  With respect to $X_1$,   $r$ involves some element of $\lsup{\nu}{z}_{[1{\uparrow}d]}$,
where $\nu = \nu(r)$ denotes the least element of $[0{\uparrow}\infty[$\, such that
$r \in \gp{(\,\lsup{1}{x})\,\, \vee\,\,  \lsup{[0{\uparrow}\nu]}{z}_{[1{\uparrow}d]}}{\quad}$.
\end{enumerate}

Since  $\lsup{1}{x} = \lsup{0}{u} \cdot x$, we can
identify
 $\gp{(x)  \,\, \vee  \,\, \lsup{[0{\uparrow}\nu]}{z}_{[1{\uparrow}d]}}{\quad}=
\gp{(\,\lsup{1}{x})  \,\, \vee  \,\, \lsup{[0{\uparrow}\nu]}{z}_{[1{\uparrow}d]}}{\quad}$,
and thus view $r$ as an element of a free group with two specified free generating sets.
With respect to $(x)  \,\, \vee  \,\, \lsup{[0{\uparrow}\nu]}{z}_{[1{\uparrow}d]}$,
$r$ involves some element of $\lsup{\nu}{z}_{[1{\uparrow}d]}$,  even if $\nu = 0$, by~(R\ref{it:4}) and~(R\ref{it:2}).
With respect to $(\,\lsup{1}{x})  \,\, \vee  \,\, \lsup{[0{\uparrow}\nu]}{z}_{[1{\uparrow}d]}$,
 $r$ involves some element of $\lsup{0}{z}_{[1{\uparrow}d]}$, by~(R\ref{it:5}).
We define
\begin{align*}
G_{[0{\uparrow}\nu]} &\coloneqq \gp{(x)  \,\, \vee  \,\, \lsup{[0{\uparrow}\nu]}{z}_{[1{\uparrow}d]}}{r}=
\gp{(\,\lsup{1}{x})  \,\, \vee  \,\, \lsup{[0{\uparrow}\nu]}{z}_{[1{\uparrow}d]}}{r},\\
G_{[0{\uparrow}(\nu-1)]} &\coloneqq \gp{(x)  \,\, \vee  \,\, \lsup{[0{\uparrow}(\nu-1)]}{z}_{[1{\uparrow}d]}}{\quad},\\
G_{[1{\uparrow}\nu]} &\coloneqq \gp{(\,\lsup{1}{x})  \,\, \vee  \,\, \lsup{[1{\uparrow}\nu]}{z}_{[1{\uparrow}d]}}{\quad}.
\end{align*}
By Magnus' Freiheitssatz, which appears as Corollary~\ref{Cor:Magnus} in the appendix,
the natural maps from $G_{[0{\uparrow}(\nu-1)]}$ and
$G_{[1{\uparrow}\nu]}$ to $G_{[0{\uparrow}\nu]}$ are injective.

We have an isomorphism  $y\colon G_{[0{\uparrow}(\nu-1)]} \to G_{[1{\uparrow}\nu]}$
 given by the natural bijection on the specified free generating sets.
We can then form the HNN extension
$G_{[0{\uparrow}\nu]} \mathop{\ast}\limits_{G_{[1{\uparrow}\nu]}} y$.
On simplifying the presentation we recover the presentation of $G$ and thus
obtain the HNN decomposition
$G = G_{[0{\uparrow}\nu]} \mathop{\ast}\limits_{G_{[1{\uparrow}\nu]}} y$.
\hfill\qed
\end{Not}

In the case where $G$ is  a hyperbolic  orientable \ors group,
this HNN decomposition was obtained topologically
by Howie~\cite[Proposition~2.1.2(c)]{Howie04} who attributed it to an argument
implicit in~\cite[Proof of Theorem~2.2]{Hempel90}
which in turn is attributed to Howie.

\begin{Def} A group is said to be \textit{locally indicable} if each
finitely generated subgroup  \textit{either}  is  trivial \textit{or}   has  some infinite, cyclic
quotient. \hfill\qed
\end{Def}

\begin{Thm}\label{Thm:hemp} Let $d \in [0{\uparrow}\infty[$, let
$F\coloneqq \gp{(x,y)\vee z_{[1{\uparrow}d]}}{\quad}$, and let
$u$ and $r$ be elements of~$F$.  Suppose that
$u \in \gen{z_{[1{\uparrow}d]}}$, and that $r$
is a Hempel relator for $\prs{(x,y)\vee z_{[1{\uparrow}d]}}{[x,y]u}$.
Let $G\coloneqq \gp{(x,y)\vee z_{[1{\uparrow}d]}}{[x,y]u, r}$.
If $\sqrt[F]{r} = r$, then $G$ is locally indicable.
\end{Thm}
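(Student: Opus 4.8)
The plan is to exploit the HNN decomposition $G = G_{[0{\uparrow}\nu]}\mathop{\ast}\limits_{G_{[1{\uparrow}\nu]}} y$ from Notation~\ref{Not:HNN} together with a local-indicability result of Howie for HNN extensions. Recall that a group built from a locally indicable base group by iterated HNN extensions over subgroups, provided the stable-letter conjugation respects a suitable structure, is again locally indicable; the precise statement we want (which is presented via Bass-Serre theory in the appendix, and which is the "deep result of Howie" alluded to in Section~\ref{sec:HNN}) says that if $A$ is locally indicable and $C$ is a subgroup of $A$ with an isomorphism onto another subgroup of $A$, then the HNN extension $A\mathop{\ast}\limits_C t$ is locally indicable provided $A$ itself is locally indicable and the edge group embeds "compatibly" — in the case at hand both edge groups are free, hence locally indicable, and the real content is that the \emph{vertex} group $G_{[0{\uparrow}\nu]}$ is locally indicable. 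So the first and main step is:

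\textbf{Step 1: the vertex group $G_{[0{\uparrow}\nu]}$ is locally indicable.} This is a one-relator group $\gp{(x)\vee \lsup{[0{\uparrow}\nu]}{z}_{[1{\uparrow}d]}}{r}$. By a theorem of Howie (a one-relator group $\gp{X}{r}$ is locally indicable if and only if $r$ is not a proper power in the free group on $X$), it suffices to check that $r$ is not a proper power in $\gp{(x)\vee\lsup{[0{\uparrow}\nu]}{z}_{[1{\uparrow}d]}}{\quad}$. But this free group is a free factor of $\gp{X_1}{\quad}$, and $\sqrt[F]{r}=r$ means $r$ is not a proper power in $F$; I need to transfer "not a proper power" from $F$ down through the free factor $\gp{X_1}{\quad}$ of $N(F)$ (recall $r\in\gp{X_1}{\quad}\le N(F)\le F$ by (R1)) and then down to the free factor $\gp{(x)\vee\lsup{[0{\uparrow}\nu]}{z}_{[1{\uparrow}d]}}{\quad}=\gp{(\lsup1x)\vee\lsup{[0{\uparrow}\nu]}{z}_{[1{\uparrow}d]}}{\quad}$ (which contains $r$ by the definition of $\nu$). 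Roots in free groups behave well under passage to free factors: if $H$ is a free factor of $K$ and $w\in H$ is a proper power in $K$, then $w$ is already a proper power in $H$ (since the root of $w$ in $K$ lies in any free factor containing $w$, by the description of cyclic subgroups in free products). Hence $\sqrt[F]{r}=r$ forces $r$ to have trivial root in $\gp{(x)\vee\lsup{[0{\uparrow}\nu]}{z}_{[1{\uparrow}d]}}{\quad}$ as well, so $G_{[0{\uparrow}\nu]}$ is locally indicable by Howie's theorem.

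\textbf{Step 2: pass to the HNN extension.} With $G_{[0{\uparrow}\nu]}$ locally indicable and both associated subgroups $G_{[0{\uparrow}(\nu-1)]}$, $G_{[1{\uparrow}\nu]}$ free (hence locally indicable, and in fact free factors of the free groups $\gp{(x)\vee\lsup{[0{\uparrow}\nu-1]}{z}_{[1{\uparrow}d]}}{\quad}$ etc.), invoke the result from the appendix: an HNN extension of a locally indicable group over an isomorphism of two subgroups, each of which is locally indicable and "nicely embedded", is locally indicable — more precisely, one shows via Bass-Serre theory that a finitely generated subgroup $H\le G$ acts on the Bass-Serre tree; either $H$ fixes a vertex, in which case it is conjugate into $G_{[0{\uparrow}\nu]}$ and we are done by Step 1, or $H$ acts with no global fixed point, in which case one produces a nontrivial map $H\to\Z$ by collapsing an $H$-invariant subforest and reading off a stable letter (using that the edge groups are proper — here is where the Freiheitssatz-based fact that $G_{[1{\uparrow}\nu]}\ne G_{[0{\uparrow}\nu]}$ and that $y\notin$ any conjugate of the vertex group matters). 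Assembling, $G$ is locally indicable.

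\textbf{Expected main obstacle.} The routine-looking but genuinely load-bearing point is Step 1: ensuring that the hypothesis $\sqrt[F]{r}=r$ — a statement about $r$ in the \emph{whole} free group $F = N(F)\rtimes C_\infty$ — really does guarantee $r$ is not a proper power in the smaller free group $\gp{(x)\vee\lsup{[0{\uparrow}\nu]}{z}_{[1{\uparrow}d]}}{\quad}$ on which the one-relator vertex group is built. One must be careful that $x=\lsup0{\overline u}\cdot\lsup1x$ is part of a free generating set (which is why (R2)–(R5) were arranged), and that the nested free-factor inclusions $\gp{(x)\vee\lsup{[0{\uparrow}\nu]}{z}_{[1{\uparrow}d]}}{\quad}\le\gp{X_1}{\quad}\le N(F)\le F$ genuinely let roots descend. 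Modulo that bookkeeping, everything else is an application of Howie's one-relator local-indicability theorem and the appendix's Bass-Serre argument for HNN extensions.
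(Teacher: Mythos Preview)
Your Step~1 is fine: roots in a free group descend to any free factor containing the element, so $\sqrt[F]{r}=r$ forces $r$ to be root-closed in $\gp{(x)\vee\lsup{[0{\uparrow}\nu]}{z}_{[1{\uparrow}d]}}{\quad}$, and Brodski\u{\i}'s theorem (Corollary~\ref{Cor:Brodskii}) then makes the one-relator vertex group $G_{[0{\uparrow}\nu]}$ locally indicable. Contrary to your ``expected main obstacle'', this is the routine half.

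The genuine gap is Step~2. The appendix contains no statement of the form ``an HNN extension of a locally indicable group over free associated subgroups is locally indicable'', and in fact this is \emph{false}: the two ``half-Higman'' groups $G_1=\gp{a,b,c}{bab^{-1}=a^2,\ cbc^{-1}=b^2}$ and $G_2=\gp{c,d,a}{dcd^{-1}=c^2,\ ada^{-1}=d^2}$ are ascending HNN extensions of $BS(1,2)$, hence locally indicable, yet the HNN extension $(G_1\ast G_2)\mathop{\ast}\limits_{\langle a,c\rangle} t$ (with $t$ conjugating the copy of $\langle a,c\rangle$ in $G_1$ to that in $G_2$) contains Higman's perfect group $G_1\mathop{\ast}\limits_{\langle a,c\rangle}G_2$ and so is not locally indicable. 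Your Bass--Serre sketch misses exactly this phenomenon: when $H\backslash T_H$ is a tree, $H$ is a nontrivial amalgam with no stable letter to read off, and such amalgams need not be indicable. The paper's proof is accordingly different. It passes to $N(G)=\ker(G\twoheadrightarrow C_\infty)$ and uses the Hempel conditions (R\ref{it:2})--(R\ref{it:4}) together with $\sqrt[F]{r}=r$ to show that each one-sided truncation $G_{[j{\uparrow}\infty[}$ is obtained from the trivial group by repeatedly forming $(G'\ast\gp{\lsup{i+\nu}{z}_{[1{\uparrow}d]}}{\quad})/\normgen{\lsup{i}{r}}$ with $\lsup{i}{r}$ not a proper power and not conjugate into $G'$; this is precisely Howie's reducible-presentation hypothesis, so Corollary~\ref{Cor:redpres} gives local indicability of $G_{[j{\uparrow}\infty[}$, hence of $N(G)=\bigcup_j G_{[j{\uparrow}\infty[}$, hence of $G=N(G)\rtimes C_\infty$. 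The load-bearing use of the Hempel-relator structure is thus in getting the staircase of relators for Corollary~\ref{Cor:redpres}, not in the root bookkeeping of Step~1.
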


\begin{proof}  We use Notation~\ref{Not:HNN}.

 In $G$, define, for each  $i \in \Z$,\vspace{-1mm}
\begin{align*}
G_{[i{\uparrow}(i+\nu-1)]} &\coloneqq  \lsup{y^i}\!{(G_{[0,(\nu-1)]})} =
\gp{ (\,\lsup{i}{x})\,\, \vee\,\, \lsup{[i{\uparrow}(i+\nu-1)]}{z}_{[1{\uparrow}d]} }{ \quad },\\
G_{[i{\uparrow}(i+\nu)]} &\coloneqq  \lsup{y^i}\!{(G_{[0,\nu]})} =
\gp{ (\,\lsup{i}{x})\,\, \vee\,\, \lsup{[i{\uparrow}(i+\nu)]}{z}_{[1{\uparrow}d]} }{\,\,\lsup{i}{r}\, }.
\end{align*}

We can then write\vspace{-1mm}
\begin{equation}\label{eq:freeprod2}
G_{[i{\uparrow}(i+\nu)]}  = (G_{[i{\uparrow}(i + \nu-1)]} \ast
\gp{\lsup{i+\nu}{z}_{[1{\uparrow}d]}  }{ \quad })/\normgen{\lsup{i}{r}}.
\end{equation}

Since $\sqrt[F]{r} = r$,  $\lsup{i}{r}$ is not a proper power in $G_{[i{\uparrow}(i + \nu-1)]} \ast
\gp{\lsup{i+\nu}{z}_{[1{\uparrow}d]}  }{ \quad }$.

By~(R\ref{it:2}),~(R\ref{it:3}) and~(R\ref{it:4}), $\lsup{0}{r} \in G_{[0{\uparrow}(\nu-1)]} \ast
\gp{\lsup{\nu}{z}_{[1{\uparrow}d]}  }{ \quad }$ is not conjugate to any
element of $G_{[0{\uparrow}(\nu-1)]}$.  Hence $\lsup{i}{r} \in G_{[i{\uparrow}(i + \nu-1)]} \ast
\gp{\lsup{i+\nu}{z}_{[1{\uparrow}d]}  }{ \quad }$ is not conjugate to any
element of $G_{[i{\uparrow}(i + \nu-1)]}$.

By using $i+ \nu +1$, we can form the free product with amalgamation
\begin{align*}
G_{[i{\uparrow}(i+\nu+1)]} &\coloneqq  G_{[i{\uparrow}(i+\nu)]}
 \mathop{\ast}\limits_{\textstyle
\gp{(\, \lsup{i}{u}\cdot \lsup{i}{x} = \lsup{i+1}{x}) \,\,\,
 \vee \,\,\,  \lsup{[(i+1){\uparrow}(i+\nu)]}{z}_{[1{\uparrow}d]}}{\quad} }
G_{[(i+1){\uparrow}(i+\nu+1)]}\\
&=  G_{[i{\uparrow}(i+\nu)]}
 \mathop{\ast}\limits_{G_{[(i+1){\uparrow}(i+\nu)]}}
G_{[(i+1){\uparrow}(i+\nu+1)]}.
\end{align*}
By varying $i$, we get a bi-infinite chain of free products with amalgamation
$$N(G)  \coloneqq \quad \cdots \,\, G_{[(i-1){\uparrow}(i+\nu-1)]}
 \mathop{\ast}\limits_{\textstyle G_{[i{\uparrow}(i+\nu-1)]}}
G_{[i{\uparrow}(i+\nu)]} \mathop{\ast}\limits_{\textstyle G_{[(i+1){\uparrow}(i+\nu)]}}
G_{[(i+1){\uparrow}(i+\nu+1)]} \,\, \cdots.$$
Here $C_\infty = \gp{y}{\quad}$ acts by shifting and we find that $N(G) \rtimes C_\infty = G$.
For any finite, non-empty interval $[j{\uparrow}i]$ in $\Z$ let us define
\begin{align*}
G_{[j{\uparrow}(i+\nu)]} \coloneqq \quad
G_{[j{\uparrow}(j+\nu)]}
 \mathop{\ast}\limits_{\textstyle G_{[(j+1){\uparrow}(j+\nu)]}} \cdots
 \mathop{\ast}\limits_{\textstyle G_{[i{\uparrow}(i+\nu-1)]}}
G_{[i{\uparrow}(i+\nu)]},
\end{align*}
a subgroup of $G$.  By using~\eqref{eq:freeprod2}, we see that
\begin{equation*}
G_{[j{\uparrow}(i+\nu)]} = (G_{[j{\uparrow}(i + \nu-1)]} \ast
\gp{ \lsup{i+\nu}{z}_{[1{\uparrow}d]} }{\quad})/\normgen{\lsup{i}{r} }.
\end{equation*}
 Now
$\lsup{i}{r} \in G_{[i{\uparrow}(i+\nu-1)]} \ast
\gp{ \lsup{i+\nu}{z}_{[1{\uparrow}d]} }{\quad} \subseteq G_{[j{\uparrow}(i+\nu-1)]}\vspace{1mm}\ast
\gp{ \lsup{i+\nu}{z}_{[1{\uparrow}d]} }{\quad}$,
and $\lsup{i}{r}$ has the same cyclically reduced expression in both free products.  In particular,
$\lsup{i}{r}$  is not a proper power and is not conjugate to any
element of $G_{[j{\uparrow}(i + \nu-1)]}\vspace{1mm}$.
By a result of Howie given as Corollary~\ref{Cor:redpres} in the appendix,  the subgroup
$G_{[j{\uparrow}\infty[} \,\coloneqq \bigcup\limits_{i\in[j{\uparrow}\infty[}G_{[j{\uparrow}(i+\nu)]}$
is  then locally indicable.
It follows that
$\bigcup\limits_{j \in [0{\downarrow}(-\infty)[}G_{[j{\uparrow}\infty[}\vspace{1mm}$
is locally indicable, that is, $N(G)$ is locally indicable.
Hence, $N(G) \rtimes C_\infty$ is locally indicable, that is,
$G$ is locally indicable.
This completes the proof.
\end{proof}

In the case where $G$ is a hyperbolic orientable \ors group,  the above result
 was given by Hempel~\cite[Theorem~2.2]{Hempel90} with its proof attributed to Howie.

We now discuss torsion.

\begin{Thm} \label{thm:torsion} Let $d \in [0{\uparrow}\infty[$,
let $F\coloneqq  \gp{(x,y)\vee z_{[1{\uparrow}d]}}{\quad}\vspace{1mm}$,
and let $u$ and $r$ be elements of~$F$.  Suppose that
$u \in \gen{z_{[1{\uparrow}d]}}\vspace{1mm}$ and that $r$
is a Hempel relator for $\prs{(x,y)\vee z_{[1{\uparrow}d]}}{[x,y]u}$.
Let $G\coloneqq \gp{(x,y)\vee z_{[1{\uparrow}d]}}{[x,y]u, r}\vspace{1mm}$.

Let  $m\coloneqq \log_{F}r$  and let
$C_m \coloneqq \gen{\sqrt[F]{r}\,}/\normgen{r}$.
Then the following hold.
\begin{enumerate}[\normalfont (i).]
\vskip-0.8cm \null
\item $C_m$ can be identified with  the subgroup of $G$ generated by the image of $\sqrt[F]{r}$.
\vskip-0.8cm \null
\item $G/\gen{\lsup{G}{C_m}} = \gp{(x,y)\vee z_{[1{\uparrow}d]}}{[x,y]u, \sqrt[F]{r}\,}$
which is locally indicable.
\vskip-0.8cm \null
\item  There exists a subset $X$ of $G$ such that
$\gen{\lsup{G}{C_m}} = \mathop{\text{\Large$\ast$}}(\lsup{X}{C_m})
\coloneqq\mathop{\text{\Large$\ast$}}\limits_{g \in X}(\lsup{g}{C_m})$.
  Let $K$ denote  the kernel
of the homomorphism $\mathop{\text{\Large$\ast$}} (\lsup{X}{C_m}) \to C_m$
which acts as conjugation by $\overline g$ on $\lsup{g}{C_m}$, for each $g \in X$.
Then $K$ is a free group, and   $\gen{\lsup{G}{C_m}} = K \rtimes C_m$.
\vskip-0.8cm \null
\item\label{it:12} Each torsion subgroup of $G$ lies in some conjugate of $C_m$.
\vskip-0.8cm \null
\item Every torsion-free subgroup of $G$ is locally indicable.
\vskip-0.8cm \null
\item\label{it:10} $G$ has some torsion-free finite-index subgroup.
\end{enumerate}
\end{Thm}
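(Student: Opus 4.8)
The plan is to produce a finite-index \emph{normal} subgroup $H$ of $G$ with $H\cap C_m=\{1\}$; such an $H$ is then automatically torsion free, which gives the assertion. To see the implication, suppose $t\in H$ has finite order. By part~(iv), $\gen{t}$ lies in some conjugate $\lsup{g}{C_m}$, so $\lsup{g^{-1}}{t}\in C_m\cap\lsup{g^{-1}}{H}=C_m\cap H=\{1\}$ since $H\trianglelefteq G$; hence $t=1$. The case $m=1$ is immediate: then $\sqrt[F]{r}=r$, so Theorem~\ref{Thm:hemp} makes $G$ locally indicable, hence torsion free, and $H=G$ works. So from now on assume $m\ge 2$.

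To build $H$ I would descend a suitable finite quotient of $S$ along the quotient map $S\to G$, under which $G$ is the one-relator quotient $S/\normgen{\bar r}$ with $\bar r$ the image of $r$ in $S$ (Notation~\ref{Not:gen}). Write $\bar s$ for the image of $\sqrt[F]{r}$ in $S$, so $\bar r=\bar s^{\,m}$; since $\gp{X_1}{\quad}$ embeds in $S$ (Notation~\ref{Not:gen}) and $r\ne 1$, we get $\bar r\ne 1$ and hence $\bar s\ne 1$. Potency of $S$ (Lemma~\ref{Lem:Allenby}) now provides a homomorphism $\psi$ from $S$ onto a finite group $Q$ with $\psi(\bar s)$ of order exactly $m$. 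Then $\psi(\bar r)=\psi(\bar s)^m=1$, so $\normgen{\bar r}\subseteq\ker\psi$ and $\psi$ factors through a homomorphism $\phi\colon G\to Q$. By part~(i), $C_m$ is generated by the image of $\sqrt[F]{r}$ in $G$, which $\phi$ carries to $\psi(\bar s)$, an element of order $m=\abs{C_m}$; so $\phi$ restricts to an injection on $C_m$. Hence $H\coloneqq\ker\phi$ is a finite-index normal subgroup of $G$ meeting $C_m$ trivially, and the first paragraph finishes the proof.

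The one genuine subtlety, and the main thing I expect to have to get right, is that $G$ is not yet known to be residually finite, so the finite quotient separating the torsion subgroup $C_m$ cannot be produced directly in $G$; it must be manufactured in the potent group $S$ and then pushed down. The descent succeeds precisely because potency delivers an image of $\bar s$ of order \emph{exactly} $m$: this puts the extra relator $\bar r=\bar s^{\,m}$ into the kernel, so $\psi$ factors through $G$, while the image of $C_m$ retains its full order $m$.
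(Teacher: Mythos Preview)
Your proof of part~(vi) is correct and follows essentially the same route as the paper: invoke the potency of $S$ (Lemma~\ref{Lem:Allenby}) to obtain a finite quotient in which the image of $\sqrt[F]{r}$ has order exactly~$m$, observe that this quotient factors through~$G$, and use part~(iv) to conclude that the kernel is torsion free. The paper phrases the argument uniformly without splitting off the case $m=1$, but your separate treatment of that case is harmless.
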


\begin{proof} We again  use Notation~\ref{Not:HNN}.

Notice that $d \ne 0$ and $r \ne 1$, by~(R\ref{it:2}).

Notice that $\sqrt[F]{r}$ too is a Hempel relator for $\prs{(x,y) \vee z_{[1{\uparrow}d]}}{[x,y]u}$,
and that $\nu(\!\!\sqrt[F]{r}){=}\nu(r)$.

(i).
By results of Magnus, we can view $C_m$ as a subgroup of the one-relator group
 $G_{[0{\uparrow}\nu]}$.
By results of Higman-Neumann-Neumann,
we can view $G_{[0{\uparrow}\nu]}$ as a subgroup of $G$. Thus (i) holds.

(ii) follows easily from Theorem~\ref{Thm:hemp}.

(iii).  It is straightforward to see the following.\vspace{-2mm}
\begin{align*}
G/\gen{\lsup{G}{C_m}} &= \gp{(x,y) \vee z_{[1{\uparrow}d]}}{[x,y]u, \sqrt[F]{r}\,}\\
&= \gp{(x)   \vee  \, \lsup{[0{\uparrow}\nu]}{z}_{[1{\uparrow}d]}}{\sqrt[F]{r}\,}
 \mathop{\ast}\limits_{\gp{ (\lsup{1}{x}) \vee\, \lsup{[1{\uparrow} \nu  ]}{z}_{[1{\uparrow}d]}}{\quad}} y \\[-.1cm]
& = (G_{[0{\uparrow}\nu]}/\gen{\lsup{G_{[0{\uparrow}\nu]}}{C_m}})\mathop{\ast}\limits_{G_{[1{\uparrow}\nu]}} y.
\end{align*}

\noindent If we consider $\gen{\lsup{G}{C_m}}$
 acting on the Bass-Serre tree for the HNN decomposition of $G$,
we now see that $\gen{\lsup{G}{C_m}}$  acts freely on the edge set,
and that the quotient graph is the Bass-Serre tree for the above HNN decomposition of $G/\gen{\lsup{G}{C_m}}$.
Hence $\gen{\lsup{G}{C_m}}$ is a free product of conjugates of $\gen{\lsup{G_{[0{\uparrow}\nu]}}{C_m}}\vspace{1mm}$,
with one conjugate for each vertex of
the latter tree.   By~\cite[Theorem~1]{FKS}, $\gen{\lsup{G_{[0{\uparrow}\nu]}}{C_m}}$
in turn is a free product of conjugates of $C_m$.
Hence, $\gen{\lsup{G}{C_m}}$ is a free product of conjugates of $C_m$.
Hence we have the desired homomorphism $\gen{\lsup{G}{C_m}} \to C_m$, and its kernel  $K$.
Clearly $\gen{\lsup{G}{C_m}} = K \rtimes C_m$.
If we consider $K$ acting on the Bass-Serre tree associated with the
graph-of-groups decomposition of $\gen{\lsup{G}{C_m}}$ as a free product of
copies  of~$C_m$, we see that $K$ acts freely on the vertex set, and, hence, $K$ is a free group.

(iv).  Suppose that $H$ is some torsion subgroup of $G$.
The image of $H$ in the torsion-free quotient   $G/\gen{\lsup{G}{C_m}}$ is then trivial,
that is, $H \le \gen{\lsup{G}{C_m}}=  K \rtimes C_m$.
Since $H \cap K$ is necessarily trivial, $H$ embeds in $C_m$ and, in particular, $H$ is finite.
Also, $H$
 lies in $\gen{\lsup{G}{C_m}}= \mathop{\text{\Large$\ast$}} (\lsup{X}{C_m})$.
By Bass-Serre theory, or the Kurosh subgroup theorem,
$H$ lies in some conjugate of $C_m$.

(v). Suppose that $H$ is some torsion-free subgroup of $G$.
Then, by Bass-Serre theory, or the Kurosh subgroup theorem,
$H \cap (\mathop{\text{\Large$\ast$}} (\lsup{X}{C_m})) = H \cap  \gen{\lsup{G}{C_m}}$ is free, and,
hence,  locally indicable.  Now $H/(H\cap\gen{\lsup{G}{C_m}})$ embeds in
the locally indicable group $G/\gen{\lsup{G}{C_m}}$, and hence is locally indicable.
It follows that $H$ is locally indicable.

(vi). We imitate the proof of~\cite[Theorem~2]{FKS}.
By Lemma~\ref{Lem:Allenby}, or Remark~\ref{Rem:Allenby}, there exists some
finite group $\Phi$ and some homomorphism $\alpha \colon \gp{(x,y)\vee z_{[1{\uparrow}d]}}{[x,y]u} \to \Phi$ such that
$\alpha(\!\sqrt[F]{r}\,)$ has order exactly $m$.
This induces a homomorphism
$$\beta \colon \gp{(x,y)\vee z_{[1{\uparrow}d]}}{[x,y]u, r} \to \Phi$$
which is injective on $C_m$.  Let $N$ denote the kernel of $\beta\colon G \to \Phi$.
If $H$ is some torsion subgroup of~$N$, then, by~(iv),  $H \subseteq \lsup{g}{C_m} \cap N$ for some $g \in G$.
Since $\beta$ is injective on $\lsup{g}{C_m}$ and vanishes on $N$,
we see that $\lsup{g}{C_m}  \cap N = \{1\}$. Thus $N$ is torsion free and of finite index in $G$.
\end{proof}

In the case where $G$ is a hyperbolic orientable \ors group,
Howie~\cite{Howie04} proved several of these results.

\section{Exact sequences}\label{sec:exact}

In this section, we construct an exact sequence that shows the existence of a
two-dimensional $\uE G$ when we have a Hempel presentation.

\begin{Rem}\label{Rem:total}  We shall find ourselves considering diagrams of  abelian groups
of the forms that appear in Fig~\ref{fig:0},  where maps are written on the
right of their arguments.
\begin{figure}[t!]
\begin{center}
$\begin{CD}
 && &&  0\\
 && && @VVV\\
  &&0 &&D\\
  &&@VVV @VV{f}V\\
&& A @>c>> E\\
&& @V{a}VV @VV{g}V\\
&& B @>d>> F\\
&& @V{b}VV @VV{h}V\\
0 @>>> C @>{e}>> G  @>i>> H @>>> 0\\
&& @VVV @VVV\\
&& 0 &&0
\end{CD}
\qquad \qquad
\begin{CD}
 0\\
@VVV\\
  A\oplus D\\
  @VV{\left(\begin{smallmatrix} -a&c\\0&f
\end{smallmatrix}\right)}V\\
B\oplus E\\
@VV{\left(\begin{smallmatrix} d\\g
\end{smallmatrix}\right)}V\\
F\\
 @VV{hi}V \\
H\\
 @VVV\\
0
\end{CD}$
\end{center}
\caption{A double complex and its total complex}\label{fig:0}
\end{figure}
If the left-hand diagram is commutative and
its two columns and  long row  are exact,
then the right-hand diagram is an exact sequence; this implication follows from the
theory of total complexes of double complexes, and is easy to check by chasing diagrams.
\hfill\qed
\end{Rem}

\begin{Not}\label{Not:Fox}  Let $W$ be a set and let $G$ be a group.

We identify $\Z[G{\times}W] = \Z G \otimes_\Z \Z W $,
and, hence, for each $(p,w)\in \Z G \times W$,
we view $p\otimes w$ as an element of  $\Z[G{\times}W]$.

We also identify $\Z[G{\times}W]$ with the direct sum of a family of copies of $\Z G$ indexed by~$W$,
denoted $(\Z G)^W$.

For any subgroup $C$ of $G$,
we consider $C$ as acting trivially on $W$ and write $\Z[G \times_C W] = \Z G \otimes_{\Z C} \Z W $.

For any map of sets   $\alpha\colon W \to G$,   $w \mapsto  \alpha(w)$, the map of sets
$$W \to (\Z[G{\times}W]){\rtimes}G = \begin{pmatrix}
1&0\\ \Z[G{\times}W] &G
\end{pmatrix}, \quad w \mapsto ( 1\otimes w, \alpha(w)) = \begin{pmatrix}
1&0\\
1\otimes w &\alpha(w)
\end{pmatrix},$$ induces
a unique group homomorphism $\gp{W}{\quad} \to   (\Z[G{\times}W]) \rtimes G$, denoted
$r \mapsto (\frac{\partial r}{\partial W}, \alpha(r))$.  We call $\frac{\partial r}{\partial W}$
the \textit{total Fox derivative} of $r$ (with respect to $W$, relative to $\alpha$).
If the map $\alpha$ is understood, then, for each $w \in W$,  we  write
$\frac{\partial r}{\partial w}  \otimes w$ for the summand
of $\frac{\partial r}{\partial W}$ corresponding to~$w$.
\hfill\qed
\end{Not}

\newpage

\begin{Thm}\label{Thm:exact} Let $d \in [0{\uparrow}\infty[$\,, let
$F\coloneqq  \gp{(x,y)\vee z_{[1{\uparrow}d]}}{\quad}\vspace{1mm}$,
and let $u$ and $r$ be elements of~$F$.  Suppose that
$u \in \gen{z_{[1{\uparrow}d]}}$ and that $r$
is a Hempel relator for $\prs{(x,y)\vee z_{[1{\uparrow}d]}}{[x,y]u}$.
Let $G\coloneqq \gp{(x,y)\vee z_{[1{\uparrow}d]}}{[x,y]u, r}\vspace{1mm}$.

 Let $C$ denote the subgroup of $G$ generated by the image of $\sqrt[F]{r}$.
Let $(x,y,z_{[1{\uparrow}d]})\vspace{1mm}$ denote $(x,y) \vee z_{[1{\uparrow}d]}\vspace{1mm}$,
and let $ G \times ([x,y]u, \null_Cr)$ denote $(G \times \{[x,y]u\}) \,\, \cup (G \times_C \{r\})$.
Then the  sequence of left $\Z G$-modules given by
\begin{equation*}
0 {\to} \Z[G \times ([x,y]u, \null_Cr)]{\xrightarrow{ \left(\begin{array}{lll}
\scriptstyle 1\otimes [x,y]u \\ \scriptscriptstyle \mapsto
 \frac{ \partial [x,y]u}{\partial(x,y,z_{[1{\uparrow}d]})} \\[.3cm]
\scriptstyle 1\otimes_C r \\ \scriptscriptstyle \mapsto \frac{\partial r}{\partial (x,y,z_{[1{\uparrow}d]})}
\end{array}\right)}} \Z[G \times (x,y,z_{[1{\uparrow}d]})] {\xrightarrow{
\hskip-.15cm\scriptscriptstyle\left(\begin{array}{l}
\scriptstyle 1\otimes w \\ \scriptstyle \mapsto (w-1)1
\end{array}\scriptscriptstyle\right)
}}
\Z[G/1] {\xrightarrow{\left(\scriptstyle 1 \mapsto G\right)}} \Z[G/G] {\to} 0
\end{equation*}
is exact.
\end{Thm}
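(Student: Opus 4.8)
The plan is to realise the asserted sequence via a double complex built from the HNN decomposition $G = G_{[0{\uparrow}\nu]} \mathop{\ast}\limits_{G_{[1{\uparrow}\nu]}} y$ of Notation~\ref{Not:HNN}: combine Lyndon's two-dimensional resolution for the one-relator vertex group $G_{[0{\uparrow}\nu]}$, the length-one resolution for the free edge group $G_{[1{\uparrow}\nu]}$, and the Mayer--Vietoris sequence of the Bass--Serre tree, as in Remark~\ref{Rem:total}; then simplify the result to the stated form by the same Tietze transformations that reduce the HNN presentation to $\gp{(x,y)\vee z_{[1{\uparrow}d]}}{[x,y]u,\,r}$.

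First I would dispose of what is formal. Write $W\coloneqq(x,y)\vee z_{[1{\uparrow}d]}$ and $s\coloneqq\sqrt[F]{r}$, using $s$ also for its image in $G$, and put $m\coloneqq\log_F r$, so $r=s^{m}$ and $s^{m}=1$ in $G$. Exactness at $\Z[G/G]$ is surjectivity of the augmentation, and exactness at $\Z[G/1]=\Z G$ is the standard fact that $\{(w-1)1:w\in W\}$ generates the augmentation ideal as a left ideal (because $W$ generates $G$). The leftmost map is well defined because the Fox product rule gives $\frac{\partial r}{\partial W}=(1+s+\cdots+s^{m-1})\frac{\partial s}{\partial W}$ in $\Z[G\times W]$, whence $s\cdot\frac{\partial r}{\partial W}=\frac{\partial r}{\partial W}$, so $1\otimes_C r\mapsto\frac{\partial r}{\partial W}$ is compatible with the $C$-action. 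Moreover, exactness of the stated sequence \emph{at} $\Z[G\times W]$ is automatic: it coincides with exactness there of the ordinary chain complex of the presentation $2$-complex of $\gp{W}{[x,y]u,\,r}$ (whose universal cover is simply connected), since passing from the free module on $r$ to its quotient $\Z[G\times_C\{r\}]$ does not change the image of the boundary. Thus the real content is \emph{injectivity} of the leftmost map, equivalently that $\pi_2$ of that presentation $2$-complex is carried isomorphically onto the submodule $\Z G\cdot(s-1)$ of the $r$-summand.

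To prove this, use the HNN decomposition. The relator $r=s^{m}$ of the one-relator group $G_{[0{\uparrow}\nu]}=\gp{Y_Q}{r}$ has $s$ not a proper power in its ambient free group ($s$ is not one in $F$, hence not in any subgroup of $F$), so Lyndon's Identity Theorem --- equivalently, the standard two-dimensional $\uE G_{[0{\uparrow}\nu]}$ for a one-relator group with torsion --- supplies an exact sequence of $\Z G_{[0{\uparrow}\nu]}$-modules $0\to\Z[G_{[0{\uparrow}\nu]}\times_C\{r\}]\to\Z[G_{[0{\uparrow}\nu]}\times Y_Q]\to\Z G_{[0{\uparrow}\nu]}\to\Z\to0$. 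The edge group $G_{[1{\uparrow}\nu]}$ is free on a set $Y_P$, with resolution $0\to\Z[G_{[1{\uparrow}\nu]}\times Y_P]\to\Z G_{[1{\uparrow}\nu]}\to\Z\to0$. Inducing both up by the exact functors $\Z G\otimes_{\Z G_{[0{\uparrow}\nu]}}{-}$ and $\Z G\otimes_{\Z G_{[1{\uparrow}\nu]}}{-}$, and choosing a chain map from the induced edge resolution (a free resolution of $\Z[G/G_{[1{\uparrow}\nu]}]$) to the induced vertex resolution lifting the Bass--Serre boundary $\Z[G/G_{[1{\uparrow}\nu]}]\to\Z[G/G_{[0{\uparrow}\nu]}]$, $gG_{[1{\uparrow}\nu]}\mapsto gyG_{[0{\uparrow}\nu]}-gG_{[0{\uparrow}\nu]}$, its mapping cone is the total complex of a double complex of the kind in Remark~\ref{Rem:total}. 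Since the induced resolutions are exact off degree~$0$ and this boundary map is injective (the Bass--Serre tree has no first homology), the cone is exact, yielding $$0\to\Z[G\times_C\{r\}]\oplus\Z[G\times Y_P]\to\Z[G\times Y_Q]\oplus\Z[G\times\{y\}]\to\Z[G/1]\to\Z[G/G]\to0,$$ which is the chain complex of the presentation $2$-complex for the un-simplified HNN presentation $\gp{Y_Q\cup\{y\}}{r,\ \text{the edge relations}}$, with the free module on $r$ twisted down to $\Z[G\times_C\{r\}]$.

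It remains to simplify this to the stated sequence by Tietze moves, exactly as one simplifies the HNN presentation after Notation~\ref{Not:HNN}: the $\nu d$ edge relations $\lsup{i}{z}_\ast=\lsup{y}{(\lsup{i-1}{z}_\ast)}$ ($1\le i\le\nu$) eliminate the $\nu d$ generators $\lsup{i}{z}_\ast$ ($1\le i\le\nu$), leaving the generators $x,y,z_{[1{\uparrow}d]}$ (after trading $\lsup{1}{x}$ for $x$) and the relators $r$ (re-expressed over $W$) and the single surviving edge relation, which becomes $[x,y]u$. Each elementary collapse cancels one free $\Z G$-summand in the degree-two term against one in the degree-one term and preserves exactness, and the surviving differentials become precisely the total Fox derivatives of $[x,y]u$ and $r$ with respect to $(x,y,z_{[1{\uparrow}d]})$. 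I expect the main obstacle to be exactly this last step: verifying carefully that the Tietze collapses identify the leftover edge relator with $[x,y]u$ and convert the cone differential into the stated Fox-derivative matrix. Everything before it --- Lyndon's theorem and the Mayer--Vietoris double complex --- is routine once set up.
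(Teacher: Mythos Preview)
Your proposal is correct and follows essentially the same route as the paper: the paper builds exactly the double complex you describe (Lyndon's resolution for the one-relator vertex group $G_{[0{\uparrow}\nu]}$ induced up, the length-one resolution for the free edge group $G_{[1{\uparrow}\nu]}$ induced up, and the Bass--Serre sequence of the HNN decomposition along the bottom), invokes Remark~\ref{Rem:total} to obtain the total complex, and then performs precisely the $\nu d$ elementary collapses you anticipate to arrive at the stated sequence. The paper also spends its effort exactly where you predict, namely verifying explicitly that the surviving edge summand for $\lsup{1}{x}$ carries the Fox derivative of $[x,y]u$ and that the collapses convert the differential into $\frac{\partial r}{\partial(x,y,z_{[1{\uparrow}d]})}$.
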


\begin{Rem} The exact sequence of left $\Z G$-modules in Theorem~\ref{Thm:exact}, which has the form
\begin{equation*}
0 \to \Z G \oplus \Z [G/C]
\xrightarrow{
}
\Z G^{d+2}
\xrightarrow{  }
\Z G \xrightarrow{} \Z \to 0,
\end{equation*}
is the augmented cellular chain
complex of the `adjusted' universal cover of the Cayley complex of the presentation
$$\prs{(x,y) \vee z_{[1{\uparrow}d]}}{[x,y]u,   r},$$
where `adjusted' means the following.
In the universal cover, at each zero-cell $g \in G$,
there is attached a two-cell $g\mathbf{r}$ whose boundary reads representatives of
$\lsup{g}{r}$ in $F$ off the one-skeleton.
 Let $c$ denote the image of  $ \sqrt[F]{r}$ in $G$.
Then $c^m=1$ and the boundaries of $gc\mathbf{r}$ and $g\mathbf{r}$  read the same elements of~$F$;
 if $m \ge 2$, this
 creates a two-sphere in the universal cover.  By  `adjusting'  the universal
cover we mean
identifying $g\mathbf{r} = gc\mathbf{r}$ for each $g \in G$.
To make the $G$-action cellular, we should subdivide $\mathbf{r}$ into $m$ regions permuted by
$c$ with a single point fixed by~$c$.
This would allow us to divide out by the action of $G$ to recover an
`adjusted' Cayley complex, a two-dimensional  $\uE G$, which has a two-cell $\mathbf{r'}$,
whose boundary reads $\sqrt[F]{r}$, with an interior $\frac{1}{m}\vspace{1mm}$th-point,
or  cone-point of angle~$\frac{2\pi}{m}$,
which means that a path travelling $m$ times around the boundary of  $\mathbf{r'}$ is  null-homotopic.
\hfill\qed
\end{Rem}

\begin{proof}[Proof of Theorem~\ref{Thm:exact}]
In\vspace{1mm} Notation~\ref{Not:HNN}, let $(x, \lsup{[0{\uparrow}\nu]}{z}_{[1{\uparrow}d]})$ denote
$(x) \,\,\vee \,\, \lsup{[0{\uparrow}\nu]}{z}_{[1{\uparrow}d]}$,
let\newline $(x, \lsup{[1{\uparrow}\nu]}{z}_{[1{\uparrow}d]})$ denote
$(x) \,\,\vee \,\, \lsup{[1{\uparrow}\nu]}{z}_{[1{\uparrow}d]}$, and let
\begin{align*}
&\gen{x, \lsup{[0{\uparrow}\nu]}{z}_{[1{\uparrow}d]}}  \coloneqq \,\,G_{[0{\uparrow}\nu]}  \coloneqq \,\,
 \gp{(x) \,\,\vee\,\, \lsup{[0{\uparrow}\nu]}{z}_{[1{\uparrow}d]}}{r},  \\[.3cm]
&\gen{\,\lsup{1}{x}, \lsup{[1{\uparrow}\nu]}{z}_{[1{\uparrow}d]}}  \coloneqq\,\, G_{[1{\uparrow}\nu]}
 \coloneqq \,\, \gp{(x)\,\, \vee \,\,\lsup{[1{\uparrow}\nu]}{z}_{[1{\uparrow}d]}}{\quad}.
\end{align*}
Recall that $y \colon \gen{x, \lsup{[0{\uparrow}(\nu-1)]}{z}_{[1{\uparrow}d]}} \to
 \gen{\,\lsup{1}{x}, \lsup{[1{\uparrow}\nu]}{z}_{[1{\uparrow}d]}}$
acts by $
x\mapsto  \lsup{1}{x}$, $\lsup{i}{z}_{\ast} \mapsto \lsup{i+1}{z}_{\ast}$,
and  that\linebreak $G =\gen{x, \lsup{[0{\uparrow}\nu]}{z}_{[1{\uparrow}d]}}
 \mathop{\ast}\limits_{\gen{\lsup{1}{x},\lsup{[1{\uparrow}\nu]}{z}_{[1{\uparrow}d]}} } y$.
All the notation involved in Fig.~\ref{fig:1} has now been explained.

\begin{figure}[t!]
\begin{center}
$\begin{CD}
\\
@.
0
\\[.3cm]
@.
@VVV
\\[.3cm]
0
@.
\Z[G/C]
\\[.3cm]
@VVV
@VV{\scriptstyle\left(\begin{array}{l}
\scriptstyle C\,\,   \mapsto  \\
\scriptstyle\frac{\partial r(x,\lsup{[0{\uparrow}\nu]}{z}_{[1{\uparrow}d]})}
{\partial(x,\lsup{[0{\uparrow}\nu]}{z}_{[1{\uparrow}d]})}
\end{array}\scriptstyle\right)}V
\\[-3.3cm]
\Z[G \times (\,\lsup{1}{x},\lsup{[1{\uparrow}\nu]}{z}_{[1{\uparrow}d]})]
@>{\scriptstyle\left(\begin{array}{l}
\scriptstyle 1 \otimes\, \lsup{1}{x}   \,\, \mapsto  \\
   \scriptstyle (y-\lsup{0}{u})\otimes x  \\  \scriptstyle -
\frac{\partial\,\lsup{0}{u}}{\partial \,\lsup{0}{z}_{[1{\uparrow}d]}}\\ \\
\scriptstyle 1\otimes \,\,\lsup{i}{z}_{\ast} \,\, \mapsto \\
\scriptstyle y \otimes \,\,\lsup{i-1}{z}_{\ast}  \\\scriptstyle -  1\otimes \,\,\lsup{i}{z}_{\ast}
\end{array}\scriptstyle\right)
} >\phantom{xxxxxxxxxxxxxxxxx}>
\Z[G \times (x,\lsup{[0{\uparrow}\nu]}{z}_{[1{\uparrow}d]})]
\\[.3cm]
@VV{\scriptstyle \left(\begin{array}{lll}
\scriptstyle 1\otimes w   \,\,  \mapsto   \\ \scriptstyle (w-1)\otimes y
\end{array}\scriptstyle \right)}V
@VV{\scriptstyle\left(\begin{array}{l}
\scriptstyle 1\otimes w   \,\, \mapsto \\ \scriptstyle (w-1)1
\end{array}\scriptstyle\right)}V
\\[-.5cm]
\Z[G \times (y)]
@>{\scriptstyle\left(\begin{array}{l}
\scriptstyle 1\otimes y  \,\, \mapsto  \\ \scriptstyle (y-1)1
\end{array}\scriptstyle\right)}>\phantom{xxxxxxxxxxxxxxxxx}>
\Z[G/1]
\\[.5cm]
@VV{\scriptstyle\left(\begin{array}{l}
\scriptstyle 1\otimes y  \,\, \mapsto \\
\scriptstyle \gen{\,\lsup{1}{x},\lsup{[1{\uparrow}\nu]}{z}_{[1{\uparrow}d]}}
\end{array}\scriptstyle\right)}V
@VV{\scriptstyle\left(\begin{array}{l}
\scriptstyle  1\,\, \mapsto \\  \scriptstyle \gen{x,\lsup{[0{\uparrow}\nu]}{z}_{[1{\uparrow}d]}}
\end{array}\scriptstyle\right)}V
\\[.3cm]
0 \to \Z[G/\gen{\,\lsup{1}{x},\lsup{[1{\uparrow}\nu]}{z}_{[1{\uparrow}d]}}]
@>{\scriptstyle\left(\begin{array}{l}
\scriptstyle \gen{\,\lsup{1}{x},\lsup{[1{\uparrow}\nu]}{z}_{[1{\uparrow}d]}} \mapsto  \\ \scriptstyle
(y-1)\gen{ x,\lsup{[0{\uparrow}\nu]}{z}_{[1{\uparrow}d]}}
\end{array}\scriptstyle\right)}>\phantom{xxxxxxxxxxxxxxxxx}>
\Z[G/\gen{x,\lsup{[0{\uparrow}\nu]}{z}_{[1{\uparrow}d]}}]
@>>>
\Z[G/G] \to 0
\\[.3cm]
@VVV @VVV
\\[.3cm]
0 @. 0
\end{CD}$
\end{center}
\caption{A commuting diagram.}\label{fig:1}
\end{figure}

We now make five observations about Fig.~\ref{fig:1}.
\newline\indent \phantom{ii}(i). The long row is
the exact augmented cellular chain complex of the
Bass-Serre tree corresponding to the HNN graph-of\d1groups
decomposition of $G$ with one vertex and one edge.
 See, for example,~\cite[Examples~I.3.5(v) and Theorem~I.6.6]{DicksDunwoody89}.
\newline\indent\phantom{i}(ii). The left column   is the exact sequence obtained by applying the exact functor
$\Z G\otimes_{\Z[\gen{\,\lsup{1}{x}, \lsup{[1{\uparrow}\nu]}{z}_{[1{\uparrow}d]}}]} (\quad)$
to the exact   augmented cellular chain complex of the
Cayley tree of the free group
$\gen{\,\lsup{1}{x}, \lsup{[1{\uparrow}\nu]}{z}_{[1{\uparrow}d]}}$, or, equivalently,
the Bass-Serre tree corresponding to the graph-of-groups decomposition  with one vertex and
$1 + \nu d$ edges.  See, for example,~\cite[Examples~I.3.5(i) and Theorem~I.6.6]{DicksDunwoody89}.
\newline\indent(iii). The right column  is the exact sequence obtained by applying the exact functor
$\Z G\otimes_{\Z[\gen{x, \lsup{[0{\uparrow}\nu]}{z}_{[1{\uparrow}d]}}]} (\quad)$
to Lyndon's exact sequence  for the one-relator group
$\gp{x, \lsup{[0{\uparrow}\nu]}{z}_{[1{\uparrow}d]}}{r}$;
see, for example,~\cite[($*$) on
 p.~167]{Chiswell03}.
\newline\indent\phantom{i}(iv). It is clear that the lower square commutes.
\newline\indent\phantom{ii}(v). To see that the upper square commutes, we note that along
the upper route in the upper square,
\allowdisplaybreaks
\begin{align*}
1 \otimes \,\lsup{1}{x} \quad \mapsto \quad
  &(y-\lsup{0}{u})\otimes x \,\,-\textstyle\sum\limits_{w \in\,\, \lsup{0}{z}_{[1{\uparrow}d]} }
  \frac{\partial\,\,  \lsup{0}{u}}{\partial w}\otimes w \\
\mapsto \quad   &(y- \lsup{0}{u})(x-1) -  (\lsup{0}{u} -1) = yx  - \lsup{0}{u}x - y +1
 \\&= \lsup{1}{x}\cdot y- \lsup{1}{x} -y+1 = (\,\lsup{1}{x}-1)(y-1),
\intertext{and}
1\otimes \,\lsup{i}{z}_{*}  \quad  \mapsto \quad &y \otimes \,\lsup{i-1}{z}_{*} -  1\otimes \,\lsup{i}{z}_{*}\\
\mapsto \quad  &y(\,\lsup{i-1}{z}_{*} -1) - (\,\lsup{i}{z}_{*} -1) = y\cdot \lsup{i-1}{z}_{*} -y-\lsup{i}{z}_{*} +1
\\&  = \lsup{i}{z}_{*} \cdot y -y - \lsup{i}{z}_{*} + 1 = (\,\lsup{i}{z}_{*} -1)(y-1).
\end{align*}
It is now clear that the upper square commutes.

\begin{figure}[t!]
\begin{center}
$\begin{CD}
0&& 0
\\
@VVV@VVV
\\[0cm]
\Z[G \times (\lsup{1}{x},\lsup{[1{\uparrow}\nu]}{z}_{[1{\uparrow}d]})]  \oplus \Z[G/C]
 @>{\left(\begin{smallmatrix}
1 \otimes w  &\mapsto &   1 \otimes  w\\ \\
C &\mapsto& 1 \otimes_C r
\end{smallmatrix}\right)}>{\phantom{xxxxxxxxxxxxxxxxxxxxxxxxxxxxx}}>
\Z[G \times (\,\lsup{1}{x},\,\lsup{[1{\uparrow}\nu]}{z}_{[1{\uparrow}d]}, \null_Cr)]
\\[0.5cm]
@VV{\left(\begin{smallmatrix}
1 \otimes\,\lsup{1}{x}  & \mapsto  &  (1-\, \lsup{1}{x})\otimes y   \\&&   +(y-\,\lsup{0}{u})\otimes x
\\&&- \frac{\partial \,\lsup{0}{u}}{\partial \,\lsup{0}{z}_{[1{\uparrow}d]}}\\ \\
1\otimes \,\lsup{i}{z}_\ast  &\mapsto & (1-\,\lsup{i}{z}_\ast) \otimes y\\&& + y \otimes\,\lsup{i-1}{z}_\ast
\\&&-  1\otimes \,\lsup{i}{z}_\ast \\ \\
C &\mapsto&\frac{\partial (r(x,\lsup{[0{\uparrow}\nu]}{z}_{[1{\uparrow}d]}))}
{\partial(x,\lsup{[0{\uparrow}\nu]}{z}_{[1{\uparrow}d]})}
\end{smallmatrix}\right)
\hskip 1.2cm=}V
@V{\left(\begin{smallmatrix}
-[x,y] \otimes \,\lsup{1}{x}  & \mapsto  &  \frac{\partial ([x,y]\,\lsup{0}{u})}
                                           {\partial(x,y,\,\lsup{[0{\uparrow}\nu]}{z}_{[1{\uparrow}d]})}
\\ \\ \\  \\ \\ \\[-.3cm]
1\otimes \,\lsup{i}{z}_\ast  &\mapsto &
\frac{\partial (y\cdot\lsup{i-1}{z}_\ast\cdot \overline y  \cdot \lsup{i}{\overline z}_\ast)}
      {\partial(x,y,\,\lsup{[0{\uparrow}\nu]}{z}_{[1{\uparrow}d]})} \\ \\ \\ \\
1 \otimes_C r &\mapsto&\frac{\partial (r(x,\lsup{[0{\uparrow}\nu]}{z}_{[1{\uparrow}d]} ))}
                            {\partial(x,y,\,\lsup{[0{\uparrow}\nu]}{z}_{[1{\uparrow}d]})}
\end{smallmatrix}\right)}VV
\\[.5cm]
\Z[G \times  (y) ] \oplus \Z[G \times (x,\lsup{[0{\uparrow}\nu]}{z}_{[1{\uparrow}d]})]
@=
\Z[G \times (x,y,\,\lsup{[0{\uparrow}\nu]}{z}_{[1{\uparrow}d]})]
\\[.5cm]
@VV{\left(\begin{smallmatrix}
1\otimes w  &\mapsto &(w-1)1\end{smallmatrix}\right)\hskip2.9cm =}V
@V{\left(\begin{smallmatrix}
1\otimes w  &\mapsto (w-1)1
\end{smallmatrix}\right)
}VV
\\[.2cm]
\Z[G/1] @= \Z[G/1]
\\[.2cm]
@VV{(1\,\,\mapsto\,\, G)}V
@V{(1\,\,\mapsto\,\, G)}VV
\\[.2cm]
\Z[G/G] @= \Z[G/G]
\\[.2cm]
@VVV @VVV
\\[.2cm]
0 &&0
\end{CD}$
\end{center}
\caption{An exact sequence rewritten.}\label{fig:2}
\end{figure}

With these five observations in mind, we can
 apply Remark~\ref{Rem:total} to Fig.~\ref{fig:1}, and what we get is
the exact sequence  which appears as the left column of Fig.~\ref{fig:2}.
After some adjustments, it becomes the right column of Fig.~\ref{fig:2}, which can
be viewed as the augmented cellular chain
complex of the adjusted  universal cover of the  Cayley complex of the presentation
$$\prs{(x,y) \,\,\,\vee\,\, \,\lsup{[0{\uparrow}\nu]}{z}_{[1{\uparrow}d]}}{[x,y]u,\,\,
(y\cdot\lsup{i-1}{z}_\ast\cdot \overline y  \cdot \lsup{i}{\overline z}_\ast \mid
z_\ast^{(i)} \in \,\lsup{[1{\uparrow}\nu]}{z}_{[1{\uparrow}d]}),\,\,
r(x,\lsup{[0{\uparrow}\nu]}{z}_{[1{\uparrow}d]}) }.$$

If $\nu = 0$, we have the desired exact sequence.

If $\nu \ge 1$,  then
 we  shall delete $\lsup{[1{\uparrow}\nu]}{z}_{[1{\uparrow}d]}$
from the set of generators and delete
$$(y\cdot\lsup{i-1}{z}_\ast\cdot \overline y  \cdot \lsup{i}{\overline z}_\ast \mid
z_\ast^{(i)} \in \,\lsup{[1{\uparrow}\nu]}{z}_{[1{\uparrow}d]})$$
 from the set of relators,  and  understand, henceforth, that
$ \lsup{i}{z}_\ast$ denotes  $\lsup{y^i}{\!z}_\ast$ in the
new generators.  We consider $G$ as being unaltered, but we alter
the exact sequence by dividing out  by the exact subcomplex
$$0 \to \Z[G \times (\lsup{[1{\uparrow}\nu]}{z}_{[1{\uparrow}d]})])]
\xrightarrow{\begin{pmatrix}
\hskip-.9cm\scriptstyle  1\otimes \,\lsup{i}{z}_\ast  \mapsto
\\[.2cm] \scriptstyle\frac{\partial (y\cdot\lsup{i-1}{z}_\ast\cdot \overline y  \cdot \lsup{i}{\overline z}_\ast)}
      {\partial(x,y,\,\lsup{[0{\uparrow}\nu]}{z}_{[1{\uparrow}d]})}
\end{pmatrix}}
\Z[G \times (\scriptstyle\frac{\partial (y\cdot\lsup{i-1}{z}_\ast\cdot \overline y  \cdot \lsup{i}{\overline z}_\ast)}
      {\partial(x,y,\,\lsup{[0{\uparrow}\nu]}{z}_{[1{\uparrow}d]})}\textstyle\mid \,\lsup{i}{z}_\ast
\in \lsup{[1{\uparrow}\nu]}{z}_{[1{\uparrow}d]})]
\to 0 \to 0 \to 0.$$
Thus, for each $i \in [1{\uparrow}d]$, in the quotient of
$\Z[G \times (\,\lsup{1}{x},\,\lsup{[1{\uparrow}\nu]}{z}_{[1{\uparrow}d]}, \null_Cr)]$, we are identifying
$1\otimes \,\lsup{i}{z}_\ast$ with $0$, while, in the quotient of
$\Z[G \times (x,y,\,\lsup{[0{\uparrow}\nu]}{z}_{[1{\uparrow}d]})]$, we are identifying
$1\otimes \,\lsup{i}{z}_\ast$ with
$(1-\,\lsup{i}{z}_\ast) \otimes y+ y \otimes\,\lsup{i-1}{z}_\ast,$
which, by induction, is identified with
$\frac{\partial(y^i \cdot  z_\ast \cdot  \overline y^i )}{\partial(y,  z_\ast  )}.$
It follows that, in the quotient of $\Z[G \times (x,y,\,\lsup{[0{\uparrow}\nu]}{z}_{[1{\uparrow}d]})]$,
$\frac{\partial (r(x,\lsup{[0{\uparrow}\nu]}{z}_{[1{\uparrow}d]} ))}
                            {\partial(x,y,\,\lsup{[0{\uparrow}\nu]}{z}_{[1{\uparrow}d]})} $
becomes identified with $\frac{\partial (r(x,y, z_{[1{\uparrow}d]} ))}
                            {\partial(x,y, z_{[1{\uparrow}d]})} $,
and the   exact quotient sequence is the augmented cellular chain
complex of the adjusted  universal cover of our original  Cayley complex.
\end{proof}

In the case where $G$ is a hyperbolic orientable \ors group,
this result was obtained by Howie~\cite[Corollary~3.6]{Howie04}.

\section{VFL and Euler characteristics}\label{sec:VFL}

In this section we calculate the Euler characteristics of the \ors groups.

\begin{Defs} Consider any   resolution of $\Z$ by projective,
left $\Z G$-mod\-ules
\begin{equation} \label{Eresolution}
 \cdots \longrightarrow P_2 \longrightarrow P_1
\longrightarrow P_0 \longrightarrow \Z \longrightarrow 0.
\end{equation}

The \textit{length} of the projective $\Z
G$-resolution~\eqref{Eresolution} is the
supremum, in
$[0{\uparrow}\infty]$, of the non-empty set $\{n \in
[0{\uparrow}\infty] \mid P_{n} \ne 0\}$.

Let $\mathcal{P}$ denote the unaugmented complex $\cdots \longrightarrow P_2 \longrightarrow P_1
\longrightarrow P_0  \longrightarrow 0$, and
view $\Q$  as a $\Q$-$\Z G$-bi\-module. For each $n \in [0{\uparrow}\infty[$,
$ \Hop_n(G; \Q) \coloneqq  \Tor_n^{\Z G}(\Q, \Z)  \coloneqq \Hop_n(\Q \otimes_{\Z G} \mathcal{P})$;
for the purposes of this article,    it will be convenient to understand that $\Hop_n(G; -)$
applies to \textit{right} $\Z G$-modules.
The \textit{$n$th  Betti number of $G$} is
$b_n(G) \coloneqq\dim_{\mathbb {Q}} \Hop_n (G;\mathbb {Q})  \in [0{\uparrow}\infty]$.
The value of the Betti numbers does not depend on the choice of
projective $\Z G$-resolu\-tion~\eqref{Eresolution}.

The \textit{cohomological dimension of $G$},
denoted $\cd G$, is the smallest element
of the  subset of $[0{\uparrow}\infty]$ which consists of  lengths of
 projective $\Z G$-resolu\-tions of $\Z$.
The \textit{virtual cohomological dimension of $G$},
denoted $\vcd G$, is the smallest element of the subset of $[0{\uparrow}\infty]$
which consists of cohomological dimensions of  finite-index subgroups of $G$.
The  \textit{cohomological dimension of $G$ with respect to an associative ring $Q$},
denoted $\cd_{{\scriptscriptstyle Q}}G$, is the smallest element
of the  subset of $[0{\uparrow}\infty]$ which consists of  lengths of
 projective $Q G$-resolu\-tions of $Q$.

If there exists a resolution~\eqref{Eresolution} of finite length
such that all the $P_n$ are finitely generated, free  left $\mathbb {Z}G$-modules,
then we say that $G$ is \textit{of type $\FL$}  and we define the \textit{Euler characteristic of}  $G$
to be $$\chi(G)\coloneqq\textstyle\sum\limits_{n \in [0{\uparrow}\infty]} (-1)^n b_n(G).$$
If $G$ has some finite-index  subgroup $H$ of type $\FL$, then we say that  $G$ is  \textit{of type VFL} and,
if $G$ is not of type FL,
we  define the   \textit{Euler characteristic of}   $G$ to be
$$\chi(G) \coloneqq \textstyle\frac{1}{[G:H]}\chi(H);$$
this value, which is sometimes called the `virtual
 Euler characteristic', does not depend on the choice of~$H$.
\hfill\qed
\end{Defs}

\begin{Rem}\label{Rem:vor} If $G$ has some finitely generated, one-relator, finite-index
subgroup $H$, say $H = \gp{X}{r}$, then
$G$ is of type VFL  and $$\textstyle -\chi(G) = \frac{-1}{[G:H]} \chi(H) =
\frac{1}{[G:H]} (\abs{X} - 1 - \frac{1}{\log_{\gp{X}{\,\,}}(r)}).$$
See, for example,~\cite[($*$) on
 p.~167]{Chiswell03}.  \hfill\qed
\end{Rem}

We now discuss the case where $r = x^m$.

\begin{Exs}\label{Exs:vor}  Let $d,\,m \in[0{\uparrow}\infty[$\,,
let $F \coloneqq \gp{(x,y)\vee z_{[1{\uparrow}d]}}{\quad}$,
and let $u$ and $r$ be elements of~$F$.  Suppose that
$u \in \gen{  z_{[1{\uparrow}d]}} - \{1\} $ and that $r = x^m$.
Let  $G = \gp{(x,y)\vee z_{[1{\uparrow}d]}}{[x,y]u, r}$.

In $[1{\uparrow}\infty]$, let $m_F\coloneqq \log_Fx^m \in \{m,\infty\}$, let
$m'$ denote the supremum of the orders of the finite subgroups of~$G$,
and let $m''\coloneqq \max(m_F ,m')$.

It is  straightforward to verify the following assertions.  It then follows that
$G$ is virtually one-relator,
and hence of type VFL.
\begin{enumerate} [\normalfont (i).]
\vskip-0.6cm \null
\item If $m=0$, then
$G\,\, = \,\, \gp{(x,y)\vee z_{[1{\uparrow}d]}}{[x,y]u}\vspace{1mm}$.

Here,
 $m_F= \infty$, $m' = 1$,  $m'' = \infty$, and
$-\chi(G) = d = d - \frac{1}{m''} \ge 0$.
\vskip-0.7cm \null
\item  If  $m=1$, then   $G\,\, = \,\,\gp{(y)\vee z_{[1{\uparrow}d]}}{u}$.

Here,  $m_F = 1$, $m' = \log_{F}u$,  $m'' = \log_{F}u$,  and
$-\chi(G) = d   - \frac{1}{\log_{F}u} =   d-\frac{1}{m''}\ge 0$.
\vskip-0.7cm \null
\item  If $m\ge 2$,  then  $G \,\, = \,\,\langle\, (y)\vee \lsup{\Z_m}{z}_{[1{\uparrow}d]}
 \mid \textstyle\prod\limits_{i\in [(m-1){\downarrow}0]} \lsup{i}{u}
 \,\rangle
\,\,\,\rtimes\,\,\,\gp{x}{x^m}$, with the $x$-ac\-tion defined by $\lsup{x}{y}  =  \lsup{0}{\overline u} \cdot y$ and
$\lsup{x}{(\,\lsup{i}{z}_\ast )} = \lsup{i+1}{z}_\ast$.

Here,
 $m_F = m' = m'' = m$, and $-\chi(G) =  d-\frac{1}{m}  =  d-\frac{1}{m''}\ge 0 $. \hfill\qed
\end{enumerate}
\end{Exs}

It is now convenient to
 go back to writing  $(x,y)\vee z_{[1{\uparrow}d]}$ in the form
$x_{[1{\uparrow}{k}]}$ with $k=d+2$.

\begin{Cor}\label{Cor:Qexact} Let $k \in [3{\uparrow}\infty[$\,, let $F\coloneqq  \gp{x_{[1{\uparrow}{k}]}}{\quad}$,
and let $w$ and $r$ be elements of~$F$.  Suppose that
 $w \in [x_1,x_2]\gen{x_{[3{\uparrow}k]}}$ and that $r$ is a Hempel relator for
$\prs{x_{[1{\uparrow}{k}]}}{w}$.  Let\linebreak   $G\coloneqq \gp{x_{[1{\uparrow}k]}}{w, r}\vspace{1mm}$.

Let $m \coloneqq \log_{F}r$.  Then  $\vcd G \le 2$, $G$ is of type VFL and $-\chi(G) = k-2 - \frac{1}{m} \ge 0\vspace{1mm}$.

Let $C$ denote the subgroup of $G$ generated by
the image of $\sqrt[F]{r}$, let $Q$ be any associative ring such that $mQ = Q$, let $R\coloneqq Q G$, and
let $e\coloneqq \frac{1}{m} \sum\limits_{c\in C}c \in R\vspace{-1.5mm}$.
Then the sequence of left $R$-modules\vspace{-1mm}
\begin{equation*}
0 \to R \oplus Re
\xrightarrow{
\begin{pmatrix}
 \frac{ \partial w}{\partial x_{1}} &\cdots & \frac{ \partial w}{\partial x_{k}}
 \\[.2cm] \frac{ \partial r}{\partial x_{1}}  &\cdots & \frac{ \partial r}{\partial x_{k}}
\end{pmatrix}
}
R^{k}
\xrightarrow{   \begin{pmatrix}
\scriptstyle x_1-1 \\[.1cm]
\scriptstyle\vdots\\[.1cm]
\scriptstyle x_k-1
\end{pmatrix} }
R \xrightarrow{G\to \{1\}} Q \to 0
\end{equation*}
is exact, and $\cd_Q G \le 2$.
\end{Cor}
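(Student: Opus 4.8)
Proof proposal.

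The plan is to read the Corollary off Theorem~\ref{Thm:exact} and Theorem~\ref{thm:torsion} after specialising to $d=k-2$: put $x=x_1$, $y=x_2$, $z_i=x_{i+2}$ for $i\in[1{\uparrow}d]$, so that $x_{[1{\uparrow}k]}=(x,y)\vee z_{[1{\uparrow}d]}$, and observe that the hypothesis $w\in[x_1,x_2]\gen{x_{[3{\uparrow}k]}}$ says exactly that $w=[x,y]u$ with $u\in\gen{z_{[1{\uparrow}d]}}$, placing us in Notation~\ref{Not:gen}. Since $\Z[G\times\{w\}]=\Z G$ and $\Z[G\times_C\{r\}]=\Z G\otimes_{\Z C}\Z\cong\Z[G/C]$, Theorem~\ref{Thm:exact} supplies the exact sequence of left $\Z G$-modules
\[
0\longrightarrow\Z G\oplus\Z[G/C]\longrightarrow(\Z G)^{k}\longrightarrow\Z G\longrightarrow\Z\longrightarrow 0,
\]
in which, by Theorem~\ref{thm:torsion}(i), $C$ is cyclic of order $m=\log_F r$; since $r\ne 1$ (a Hempel relator is nontrivial by (R2)) and $F$ is free, $m\in[1{\uparrow}\infty[$.

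For the assertions on $\vcd G$, VFL and $\chi(G)$ I would restrict this sequence along $\Z N\hookrightarrow\Z G$, where $N$ is a torsion-free finite-index subgroup of $G$ provided by Theorem~\ref{thm:torsion}(vi). Restriction of scalars is exact; $\Z G$ becomes a free $\Z N$-module of rank $[G{:}N]$, while, splitting $G/C$ into $N$-orbits, $\Z[G/C]$ becomes $\bigoplus_{NgC\in N\backslash G/C}\Z[N/(N\cap\lsup{g}{C})]$. As $N$ is torsion-free and $\lsup{g}{C}$ is finite, $N\cap\lsup{g}{C}=\{1\}$, so $\Z[G/C]|_N$ is free over $\Z N$; the same fact forces each fibre of $N\backslash G\to N\backslash G/C$ to have exactly $|C|=m$ elements, so $m\mid[G{:}N]$ and $\Z[G/C]|_N$ has rank $[G{:}N]/m$. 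Thus the restricted sequence is a length-$2$ resolution of $\Z$ by finitely generated free $\Z N$-modules, whence $\cd N\le 2$, so $\vcd G\le 2$, and $N$ is of type $\FL$, so $G$ is of type VFL. Counting ranks, $\chi(N)=\big([G{:}N]+\tfrac{[G{:}N]}{m}\big)-k[G{:}N]+[G{:}N]=[G{:}N]\big(2-k+\tfrac1m\big)$, so $\chi(G)=2-k+\tfrac1m$; since $m\ge 1$ and $k\ge 3$, $\tfrac1m\le 1\le k-2$, hence $-\chi(G)=k-2-\tfrac1m\ge 0$.

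For the $R$-module sequence and the bound $\cd_Q G\le 2$ I would base-change to $Q$. Each module in the $\Z G$-sequence, and each module of cycles occurring in it, is free as an abelian group (a subgroup of a free abelian group, $\Z[G/C]$ having $\Z$-basis $G/C$), so $\Tor_1^{\Z}(Q,-)$ vanishes throughout and $Q\otimes_{\Z}(-)$ preserves exactness, giving an exact sequence of left $R=QG$-modules $0\to R\oplus(QG\otimes_{QC}Q)\to R^{k}\to R\to Q\to 0$. The hypothesis $mQ=Q$ makes the central element $m\cdot 1_Q$ invertible, so $e=\tfrac1m\sum_{c\in C}c\in QC$ is an idempotent with $ce=e$ for every $c\in C$; then $Q\cong QCe$ as left $QC$-modules, so $QG\otimes_{QC}Q\cong QGe=Re$, a direct summand of the free module $R$. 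Reading the boundary maps off the total-Fox-derivative formulas of Theorem~\ref{Thm:exact} and using the power rule $\frac{\partial r}{\partial x_i}=(1+c+\cdots+c^{m-1})\frac{\partial s}{\partial x_i}=me\,\frac{\partial s}{\partial x_i}$ in $R$, where $s=\sqrt[F]{r}$ and $c$ is its image in $G$ (so that $e\,\frac{\partial r}{\partial x_i}=\frac{\partial r}{\partial x_i}$, which is what makes the row indexed by $r$ a well-defined map out of the summand $Re$), one recovers precisely the displayed sequence. Its terms are all projective left $R$-modules, so it is a length-$2$ projective $R$-resolution of $Q$, giving $\cd_Q G\le 2$.

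The whole argument rests on Theorems~\ref{Thm:exact} and~\ref{thm:torsion}; the only genuinely delicate points are the rank count for $\Z[G/C]|_N$ — the free-orbit/fibre argument that forces $m\mid[G{:}N]$ — and verifying, once $e$ has been introduced, that the $r$-row of the presentation matrix really defines a homomorphism on the summand $Re$ rather than on the whole of $R$.
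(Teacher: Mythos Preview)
Your proof is correct and follows essentially the same route as the paper's: both restrict the exact sequence of Theorem~\ref{Thm:exact} to a torsion-free finite-index subgroup furnished by Theorem~\ref{thm:torsion}(vi), count the $\Z N$-rank of $\Z[G/C]$ as $[G{:}N]/m$ via the free action of $C$ on $N\backslash G$, and then identify $Q[G/C]\cong Re$ via $(QC)e\cong Q$. You supply a little more detail than the paper does (the $\Tor$-vanishing argument for base change and the check that $e\,\tfrac{\partial r}{\partial x_i}=\tfrac{\partial r}{\partial x_i}$), but these are exactly the points the paper leaves implicit.
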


\begin{proof}
By Theorem~\ref{thm:torsion}(\ref{it:10}), there exists some torsion-free finite-index subgroup $H$ of $G$.
Clearly, $H$ acts freely on $G$ on the left and the number of orbits is $[G{\colon}H] = \abs{H\leftmod G}$.
Since $H$ meets each conjugate of $C$ trivially, $C$ acts freely on $H\leftmod G$ on the right,
 $H$  acts freely on $G/C$\vspace{1mm} on the left,
and the number of orbits in each case is $\abs{H\leftmod G \rightmod C} =
 \frac{\abs{H\leftmod G}}{\abs{C}} = \frac{[G{:}H]}{m}$.  \vspace{1mm}

By Theorem~\ref{Thm:exact}, with $d = k-2$, we see that
$\cd H \le 2$,
$H$ is of type FL, and $\chi(H) =  [G{:}H] - (d+2)[G{:}H] + ([G{:}H] + \frac{[G{:}H]}{m})$.
Hence $\vcd G \le 2$,
 $G$ is of type VFL, and $\chi(G) = 1 - (d+2) + (1 + \frac{1}{m})$.
Since $(QC)e \simeq Q$ as left $QC$-modules, we see that $$Q[G/C] = Q G\otimes_{QC} Q \simeq
Q G\otimes_{QC} (QC)e     \simeq Re.$$
  It is now clear that the
results hold.
\end{proof}

We can now describe the Euler characteristics of the \ors groups.

\begin{Rems}\label{Rems:ors} Let $k \in [0{\uparrow}\infty[$\,,
let $F\coloneqq  \gp{x_{[1{\uparrow}{k}]}}{\quad}$,
and let $w$ and $r$ be elements of $F$.
Suppose
either that $k$ is even and
$w = \prod_{i \in [1{\uparrow}\frac{k}{2}]} [x_{2i-1},x_{2i}]\vspace{.5mm}$,
or that  $k \ge 1$ and $w = \prod x^2_{[1{\uparrow}k]}$.
Let $S \coloneqq \gp{x_{[1{\uparrow}k]}}{w}$ and let
  $G\coloneqq  \gp{x_{[1{\uparrow}k]}}{w,r}$.\vspace{1mm}

In $[1{\uparrow}\infty]$, let $m\coloneqq \log_S r\,\text{mod}\,w$, let
$m'$ denote the supremum of the orders of the finite subgroups of $G$,
and let $m''\coloneqq \max(m,m')$.

\medskip

\noindent\textbf{Case 1.} $k \le 2$.

Here, $G$ is virtually abelian of rank at most two.
In  particular, $G$ is  virtually  one-relator, and
it follows from Remark~\ref{Rem:vor} that $G$ is of type VFL
and $\chi(G) = \frac{1}{\abs{G}} \ge 0$.

\medskip

\noindent\textbf{Case 2.} $k  \ge 3$ and, for some $m \in [0{\uparrow}\infty[$\,,
$r\,\,\text{mod}\,w$ is conjugate in $S$ to the $m$th
power of some free generator of the subgroup
 $\gp{x_1x_2,\, x_2x_3}{\quad }$ of $S$.

Here, by using Lemma~\ref{Lem1}  together with   Examples~\ref{Exs:vor},
we find that $G$ is virtually  one-relator,
that $G$ is of type VFL, and that $-\chi(G) =  k-2 - \frac{1}{m''}  \ge 0$.
There are two possibilities for~$m''$.

\medskip

\noindent\textbf{Case 2a.}  $(k,m) = (3,1)$.

That is, $r\,\,\text{mod}\,w$ is conjugate in $S$ to some free generator of the
rank-two, free subgroup
 $\gp{x_1x_2,\, x_2x_3,\, x_3x_1 }{(x_1x_2)(x_2x_3)(x_3x_1)}$ of $S$.
Here, $G \simeq C_\infty{\ast}C_2$,  $-\chi(G)  = \frac{1}{2} \ge 0$, $m = 1$,
$m' = 2$, and $m'' = 2$.

\medskip

\noindent\textbf{Case 2b.} $(k,m) \ne (3,1)$.

Here,  $-\chi(G) = k-2 - \frac{1}{m} \vspace{1mm}\ge 0$.
If  $r\,\,\text{mod}\,w \ne 1$, then  $m=m'=m'' < \infty$.
If  $r\,\,\text{mod}\,w = 1$, then  $m= \infty$, $m' = 1$, and $m'' = \infty$.

\medskip

\noindent\textbf{Case 3.} $k  \ge 3$ and
$r\,\,\text{mod}\,w$ is not conjugate in $S$ to any
power of any free generator of the subgroup
 $\gp{x_1x_2,\, x_2x_3}{\quad }$ of $S$.

By using Lemmas~\ref{Lem1} and~\ref{Lem:Hempelments},
we find that there exists some presentation for $G$ as in Corollary~\ref{Cor:Qexact}.

Hence,  $G$ is of type VFL, $-\chi(G) =  k-2 - \frac{1}{m} \vspace{1mm}\ge 0$, and $m = m' = m'' < \infty$.
\hfill\qed
\end{Rems}

We  summarize the preceding, but do not record that
one can use $m'' = m$ except for one case, or $m'' = m'$ except for one case.

\begin{Cor}\label{Cor:chiors} Let $k \in [0{\uparrow}\infty[$\,,
let $F\coloneqq  \gp{x_{[1{\uparrow}{k}]}}{\quad}\vspace{1mm}$,
and let $w$ and $r$ be elements of $F$.
Suppose
either that $k$ is even and
$w = \prod_{i \in [1{\uparrow}\frac{k}{2}]} [x_{2i-1},x_{2i}]\vspace{1mm}$,
or that  $k \ge 1$ and $w = \prod x^2_{[1{\uparrow}k]}$.
Let $S \coloneqq \gp{x_{[1{\uparrow}k]}}{w}$ and let
  $G\coloneqq  \gp{x_{[1{\uparrow}k]}}{w,r}\vspace{1mm}$.
In $[1{\uparrow}\infty]$, let $m\coloneqq \log_S r\,\text{\normalfont mod}\,w$, let
$m'$ denote the supremum of the orders of the finite subgroups of $G$,
and let $m''\coloneqq \max(m,m')$. Then

$$\chi(G) = \begin{cases} \frac{1}{\abs{G}} \ge 0 &\text{if $k \le 2$,}\\[.3cm]
-k+2+\frac{1}{m''} \le 0 &\text{if $k \ge 3$}.
\end{cases}$$
\vskip -.8cm\hfill\qed\vskip .6cm
\end{Cor}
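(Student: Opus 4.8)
The plan is to reduce everything to the previously established results, organizing by the same case split used in Remarks~\ref{Rems:ors}, and then to combine those local computations into the uniform formula. First I would handle the case $k\le 2$, where $G$ is virtually abelian of rank at most two; here $G$ is finite or has a finite-index free-abelian (hence one-relator, in fact free or $\Z^2$) subgroup, so Remark~\ref{Rem:vor} (or its immediate consequences) gives $G$ of type VFL with $\chi(G)=\frac{1}{\abs{G}}\ge 0$, matching the first line of the asserted formula. This is the easy regime and requires essentially no new work.

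For $k\ge 3$ I would invoke Lemma~\ref{Lem1} to rewrite the surface presentation, replacing the generating sequence $x_{[1{\uparrow}k]}$ by $(x,y)\vee z_{[1{\uparrow}d]}$ with $d=k-2$ so that $w$ takes the form $[x,y]u$ with $u\in\gen{z_{[1{\uparrow}d]}}$ (and in the non-orientable case $u\ne 1$, which is what is needed below). Then I would apply Lemma~\ref{Lem:Hempelments}: after altering $r$ within its conjugacy class in $S$ and applying an automorphism of $\gp{x,y}{}$ fixing $[x,y]$, either $r'=x^{m_0}$ for some $m_0\in[0{\uparrow}\infty[$, or $r'$ is a Hempel relator; moreover $\log_F r'=\log_S(r\,\text{mod}\,w)=m$ and the presentation still presents $G$. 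In the power-of-$x$ case, Examples~\ref{Exs:vor} show directly that $G$ is virtually one-relator, hence of type VFL, with $-\chi(G)=d-\frac{1}{m''}=k-2-\frac{1}{m''}$ after identifying $m''$ with the quantity $\max(m_F,m')$ appearing there (the three subcases $m_0=0,1,\ge 2$ are exactly the bullet points of Examples~\ref{Exs:vor}, and one checks $m''$ agrees with the $m''$ defined in the Corollary). In the Hempel-relator case, Corollary~\ref{Cor:Qexact} gives $G$ of type VFL and $-\chi(G)=k-2-\frac{1}{m}$; combined with Theorem~\ref{thm:torsion}(\ref{it:12}), which forces every finite subgroup of $G$ into a conjugate of $C_m$ and hence gives $m'=m$, we get $m''=m$ and the same formula $-\chi(G)=k-2-\frac{1}{m''}$.

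The remaining task is bookkeeping: verifying that the two subcases exhaust $k\ge 3$ (this is the dichotomy of Lemma~\ref{Lem:Hempelments}, transported through Lemma~\ref{Lem1}: either $r\,\text{mod}\,w$ is conjugate to a power of a free generator of $\gp{x_1x_2,x_2x_3}{}$, which is the $r'=x^{m_0}$ case, or it is not, which is the Hempel case), and confirming that in every subcase the number $m''$ as \emph{defined} in the Corollary — namely $\max(\log_S r\,\text{mod}\,w,\,m')$ with $m'$ the supremum of orders of finite subgroups of $G$ — coincides with the number that naturally appears in the Euler-characteristic computation. I would present this as the same four-way split as Remarks~\ref{Rems:ors} (Cases 1, 2a, 2b, 3), quote the value of $\chi(G)$ and of $(m,m',m'')$ recorded there in each case, and observe that in all of Cases 2a, 2b, 3 the common value is $-k+2+\frac{1}{m''}\le 0$ (the inequality because $k\ge 3$ and $m''\ge 1$), while Case 1 gives $\frac{1}{\abs{G}}\ge 0$. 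Since Remarks~\ref{Rems:ors} has already done the case-by-case verification, the proof of the Corollary is just the assembly of those facts, and indeed the statement is marked with \hfill\qed immediately, signalling that no further argument beyond Remarks~\ref{Rems:ors} is expected.

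The main obstacle — such as it is — is purely one of reconciling the several slightly different meanings of "$m$" floating through the paper: $\log_F r$ in $F$ versus $\log_S(r\,\text{mod}\,w)$ in $S$ versus the index of $\gen{r}$ in a maximal cyclic subgroup, and the relation of each to $m'$. The key point that makes this harmless is Lemma~\ref{Lem:Hempelments}'s identity $\log_F r'=\log_S(r\,\text{mod}\,w)$ together with Theorem~\ref{thm:torsion}(\ref{it:12}) (finite subgroups sit inside conjugates of $C_m$, giving $m'\le m$, with equality since $C_m$ itself is finite of order $m$ when $m<\infty$) in the Hempel case, and the explicit tables in Examples~\ref{Exs:vor} in the power case. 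Once those identifications are made, the formula drops out, and no genuinely new argument is needed.
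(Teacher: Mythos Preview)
Your proposal is correct and follows essentially the same approach as the paper: the Corollary is presented there as a direct summary of Remarks~\ref{Rems:ors}, with no separate proof, and you have accurately reconstructed the four-way case split (Cases 1, 2a, 2b, 3) together with the supporting results (Lemma~\ref{Lem1}, Lemma~\ref{Lem:Hempelments}, Examples~\ref{Exs:vor}, Corollary~\ref{Cor:Qexact}, Theorem~\ref{thm:torsion}(\ref{it:12})) that feed into each case. Your closing observation that the \qed signals no argument beyond Remarks~\ref{Rems:ors} is exactly right.
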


\section{$L^2$-Betti numbers of \ors groups}\label{Sors}\label{sec:L2}

Let us begin with a brief algebraic review of Atiyah's theory of $L^2$-Betti numbers of groups.

\begin{Rev}\label{Rev}
Let $\mathbb{C}[[G]]$ denote the set of all functions from $G$ to $\mathbb{C}$
expressed as formal sums, that is, a function $x \colon G \to \mathbb{C}$, $g \mapsto x_g$,
will be written as $\sum\limits_{g \in G} x_g g$. Then $\mathbb{C}[[G]]$
has a natural $\mathbb{C}G$-bimodule structure, and contains
a copy of $\mathbb{C}G$ as $\mathbb{C}G$-sub-bimodule\vspace{1mm}.
For each $x \in \mathbb{C}[[G]]$,  let $\norm{x}\coloneqq {{ \sqrt{\phantom{\sum_{g\in G}dd}}}
\hskip-1.6cm \sum\limits_{g\in G}\, \abs{x_g}^2}\in [0,\infty]$\vspace{1mm},
and let $\tr(x) \coloneqq x_{1_G} \in \mathbb{C}$.

Let
$\ell^2(G) \coloneqq \{x \in \mathbb{C}[[G]] :  \norm{x} < \infty\}.$
 For $x$, $y \in \ell^2(G)$, $g \in G$, and $S$ a finite subset of $G$,
it follows from the Cauchy-Schwarz inequality that
$\sum\limits_{h \in S} \abs{x_h y_{\,\overline h g}}  \le  \norm{x}{\cdot}\norm{y}$;
hence, there exists a well-defined limit
$\sum\limits_{h \in G} ( x_h  y_{\,\overline h g}) \in \C$;
hence, there exists a well-defined element
 $x{\cdot} y\coloneqq
\sum\limits_{g \in G} ((\sum\limits_{h\in G} x_hy_{\,\overline hg}) g) \in \C[[G]]$,
 called the \textit{external product} of $x$ and $y$.

The \textit{group von Neumann algebra of $G$} is defined as the additive abelian group
$$\mathcal{N}(G)\coloneqq
\{p \in \ell^2(G) \mid p {\cdot} \ell^2(G) \subseteq \ell^2(G)\}$$
endowed with the ring structure induced by the external product;
it can be shown that this agrees with the definition  of the group von Neumann algebra
given in~\cite[Section 1.1]{Lueck02}, and that $\mathcal{N}(G) =
\{p \in \ell^2(G) \mid  \ell^2(G){\cdot} p  \subseteq \ell^2(G)\}$.  We then have a chain of $\C G$-bimodules
 $  \mathbb{C}G \subseteq \mathcal{N}(G) \subseteq  \ell^2(G) \subseteq \mathbb{C}[[G]]$,
and $\mathbb{C}G$ is a subring of $\mathcal{N}(G)$, and
$\ell^2(G)$ is an $\mathcal{N}(G)$-bi\-mod\-ule containing $\mathcal{N}(G)$ as
$\mathcal{N}(G)$-sub-bi\-mod\-ule.

It can be shown that the elements of $\mathcal{N}(G)$ which act faithfully on
the left $\mathcal{N}(G)$-mod\-ule~$\ell^2(G)$  are precisely
the two-sided non-zerodivisors in $\mathcal{N}(G)$,
and that these form a left and right Ore subset of
$\mathcal{N}(G)$; see~\cite[Theorem~8.22(1)]{Lueck02}.
The \textit{ring of unbounded operators affiliated to $\mathcal{N}(G)$},
denoted  $\mathcal{U}(G)$, is defined as
the left, and the right,
Ore localization of $\mathcal{N}(G)$ at the set of its two-sided non-zerodivisors;
see~\cite[Section 8.1]{Lueck02}.
For example, it is then clear that
\begin{equation}\label{Einvertible}
\text{if $g$ is an element of $G$ of infinite order, then $g-1$ is invertible in $\mathcal{U}(G)$.}
\end{equation}

It can be shown that $\mathcal{U}(G)$ is a von Neumann regular ring in which one-sided
inverses are two-sided inverses, and, hence,
one-sided zerodivisors are two-sided zerodivisors;
see~\cite[Section 8.2]{Lueck02}.

It can be shown that there exists  a continuous, additive von Neumann dimension
that assigns to every left $\mathcal{U}(G)$-module $M$
a value
$\dim_{\mspace{2mu}\mathcal{U}(G)}M \in [0,\infty]$;
see Definition 8.28 and Theorem 8.29 of~\cite{Lueck02}.
For example,  if $e$ is an idempotent element of $\mathcal{N}(G)$, then
$\dim_{\mspace{2mu}\mathcal{U}(G)}\mathcal{U}(G)  e= \tr(e);$
see Theorem 8.29 and Sections~6.1-2 of~\cite{Lueck02}.

For each $n \in [0{\uparrow}\infty[$\,,
the \textit{$n$th $L^2$-Betti number of $G$} is defined as
$$b_n^{(2)}(G) \coloneqq \dim_{\mspace{2mu}\mathcal{U}(G)}\,\,
\Hop_n(G; \mathcal{U}(G))\,\,\,\,\in\,\,\,\, [0,\infty],$$
where  $\mathcal{U}(G)$ is to be
viewed as a $\mathcal{U}(G)$-$\Z G$-bi\-module; see
 Definition~6.50, Lemma~6.51 and Theorem 8.29 of~\cite{Lueck02}.

It is easy to show that if $G$ is finite, then, for each $n \in [0{\uparrow}\infty[$\,,
$$b_n^{(2)}(G) = \begin{cases}
 \chi(G) = \frac{1}{\abs{G}} &\text{if $n = 0$,}\\
0 &\text{if $n \in [1{\uparrow}\infty[\,$.}
\end{cases}$$

By~\cite[Theorem~6.54(8b)]{Lueck02},
\begin{equation}\label{eq:zero}
b_0^{(2)}(G) = \textstyle\frac{1}{\abs{G}}.
\end{equation}

By~\cite[Theorem 1.9(8)]{Lueck02}, if $H$ is a finite-index subgroup
of $G$, then
\begin{equation}\label{eq:index}
 b_n^{(2)}(H) = [G:H] b_n^{(2)}(G).
\end{equation}

In general, there is little relation between the $n$th
$L^2$-Betti number
and the $n$th (ordinary)  Betti number,
$b_n(G) = \dim_{\mathbb {Q}} \Hop_n (G;\mathbb {Q}) \in [0{\uparrow}\infty].$
However, by~\cite[Remark~6.81]{Lueck02}, if $G$ is of type VFL, we can define and calculate
the \textit{$L^2$-Euler characteristic}
\begin{equation}\label{eq:chi}
\chi^{(2)}(G)\coloneqq\textstyle\sum\limits_{n \in [0{\uparrow}\infty[} (-1)^n b_n^{(2)}(G) =
\sum\limits_{n \in [0{\uparrow}\infty[} (-1)^n b_n (G)\eqqcolon\chi(G).
\end{equation}
\vskip -1.1cm\hfill\qed \vskip .8cm
\end{Rev}

Recall that, for any finitely generated, virtually one-relator group $G$, a formula for $\chi(G)$ was given in
Remark~\ref{Rem:vor}.

 \begin{Lem}\label{Lem:vor} If $G$ is a finitely generated, virtually
one-relator  group, then
$G$ is of type VFL and, for each $n \in [0{\uparrow}\infty[$\,,
\begin{equation*}
b_n^{(2)}(G) = \begin{cases}
\max\{\chi(G),0\} = \frac{1}{\abs{G}} &\text{if $n = 0$,}\\
\max\{-\chi(G),0\} &\text{if $n = 1$,}\\
0 &\text{if $n \in [2{\uparrow}\infty[\,$.}
\end{cases}
\end{equation*}
\end{Lem}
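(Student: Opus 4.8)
The plan is to reduce everything to a finitely generated one-relator subgroup of finite index in $G$, and then to feed in the known computation of the $L^2$-Betti numbers of one-relator groups.

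By hypothesis $G$ has a finitely generated, one-relator, finite-index subgroup $H=\gp{X}{r}$. By Remark~\ref{Rem:vor}, $G$ (and hence $H$) is of type VFL and $-\chi(G)=\frac{1}{[G:H]}\bigl(\abs{X}-1-\frac{1}{\log_{\gp{X}{\,\,}}(r)}\bigr)$. Since $b_n^{(2)}(H)=[G:H]\,b_n^{(2)}(G)$ by~\eqref{eq:index} and $\chi(H)=[G:H]\,\chi(G)$ by the definition of the virtual Euler characteristic, it suffices to prove the displayed formula with $G$ replaced by $H$, treating separately the cases $G$ finite (equivalently $H$ finite) and $G$ infinite.

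If $G$ is finite, then $\chi(G)=\frac{1}{\abs{G}}>0$, so $\max\{\chi(G),0\}=\frac{1}{\abs{G}}$ and $\max\{-\chi(G),0\}=0$, and the asserted values of $b_n^{(2)}(G)$ are precisely those recorded for finite groups in Review~\ref{Rev}. If $G$ is infinite, then so is $H$; an infinite one-relator group on at most one generator is $\Z$, and on two or more generators it surjects onto $\Z$, so in every case $\abs{X}-1-\frac{1}{\log_{\gp{X}{\,\,}}(r)}\ge 0$, whence $-\chi(G)\ge 0$ and $\chi(G)\le 0$. Thus $\max\{\chi(G),0\}=0=\frac{1}{\abs{G}}$ by~\eqref{eq:zero} and $\max\{-\chi(G),0\}=-\chi(G)$, so it remains only to see that $b_1^{(2)}(H)=-\chi(H)$ and $b_n^{(2)}(H)=0$ for $n\ge 2$; dividing by $[G:H]$ then yields the lemma. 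For this I would invoke the determination of the $L^2$-Betti numbers of one-relator groups due to Dicks and Linnell: for the infinite one-relator group $H$ one has $b_0^{(2)}(H)=0$ and $b_n^{(2)}(H)=0$ for $n\ge 2$, and then $b_1^{(2)}(H)=-\chi(H)$ is forced by~\eqref{eq:chi}. (That $b_n^{(2)}(H)=0$ for $n\ge 3$ is routine, since $H$ has a torsion-free finite-index subgroup---a one-relator group with torsion being virtually torsion-free, cf.~\cite[Theorem~2]{FKS}---which acts freely and cocompactly on a $2$-dimensional contractible complex and so has cohomological dimension at most $2$.)

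The only substantive point, and the one I expect to be the main obstacle, is the vanishing $b_2^{(2)}(H)=0$ for an infinite one-relator group $H$. Unlike the vanishing in degrees $\ge 3$, this cannot come from a mere dimension count---for instance the direct product of two rank-two free groups has cohomological dimension $2$ but second $L^2$-Betti number $1$---and it genuinely uses the one-relator structure; concretely it rests on the strong Atiyah conjecture for one-relator groups, which is the heart of the Dicks--Linnell computation being quoted. Once that input is granted, the rest is bookkeeping with $\chi$, the index formula~\eqref{eq:index}, and the value~\eqref{eq:zero} of $b_0^{(2)}$.
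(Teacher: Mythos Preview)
Your proposal is correct and takes essentially the same approach as the paper: reduce to a finitely generated one-relator subgroup $H$ of finite index, invoke the Dicks--Linnell computation~\cite[Theorem~4.2]{DicksLinnell} of the $L^2$-Betti numbers of one-relator groups, and then pass back to $G$ via the index formula~\eqref{eq:index}. The paper's proof is terser---it simply cites~\cite[Theorem~4.2]{DicksLinnell} for the one-relator case and~\eqref{eq:index} for the passage to finite overgroups---whereas you unpack the case split between finite and infinite $G$ and explain why $b_2^{(2)}=0$ is the substantive input, but the logical skeleton is identical.
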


\begin{proof} If $G$ is itself a  finitely generated, one-relator group,
the conclusions
hold, by~\cite[Theorem~4.2]{DicksLinnell}.  By Review~\ref{Rev}\eqref{eq:index}, this then
extends to overgroups of finite
index on dividing by the index.
\end{proof}

For Hempel presentations we have the following information.

\begin{Lem}\label{Lem:text} Let $k \in [3{\uparrow}\infty[$\,,
let $F\coloneqq  \gp{x_{[1{\uparrow}{k}]}}{\quad}\vspace{1mm}$, and
let $w$ and $r$ be elements of~$F$.  Suppose that
$w \in [x_1,x_2]\gen{x_{[3{\uparrow}k]}}\vspace{1mm}$ and that $r$
is a Hempel relator for $\prs{x_{[1{\uparrow}{k}]}}{w}$. Let\linebreak
$G\coloneqq \gp{x_{[1{\uparrow}k]}}{w, r}\vspace{1mm}$.
 Let $m \coloneqq \log_{F}r$,  let $C_m$ denote the subgroup of $G$ generated by
the image of $\sqrt[F]{r}$, and
let $e\coloneqq \frac{1}{m} \sum\limits_{c\in C_m}c \in \C G$.
\begin{enumerate}[\normalfont (i).]
\vskip-0.4cm \null
\item For all $q \in \mathcal{U}(G) $  and all $a \in  \mathbb{C}G$,
 if $qea = 0$ then either $qe= 0$ or $ea = 0$.
\vskip-0.7cm \null
\item The  homology of\vspace{-3mm}\newline\null\hskip 1cm
$
0 \to \mathcal{U}(G) \oplus \mathcal{U}(G)\,e
\xrightarrow{
\begin{pmatrix}
 \frac{ \partial w}{\partial x_{1}} &\cdots & \frac{ \partial w}{\partial x_{k}}
 \\[.2cm] \frac{ \partial r}{\partial x_{1}}  &\cdots & \frac{ \partial r}{\partial x_{k}}
\end{pmatrix}
}
\mathcal{U}(G)^{k}
\xrightarrow{   \begin{pmatrix}
\scriptstyle x_1-1 \\[.1cm]
\scriptstyle\vdots\\[.1cm]
\scriptstyle x_k-1
\end{pmatrix} }
\mathcal{U}(G)  \to 0,\vspace{5mm}
$
\newline is $\Hop_*(G; \mathcal{U}(G))$, and, for each
 $n \in   (0) \vee [3{\uparrow}\infty[$\,,\, $b_n^{(2)}(G) = 0$.
\end{enumerate}
\end{Lem}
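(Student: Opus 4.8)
\emph{Part~(ii).} The plan is to derive the complex of~(ii) by tensoring a finite \emph{projective} resolution of $\Q$ over $\Q G$ up to $\mathcal{U}(G)$. Concretely, I would apply Corollary~\ref{Cor:Qexact} with $Q=\Q$ — legitimate because $m=\log_F r\in[1{\uparrow}\infty[$, so $m\Q=\Q$ — obtaining the exact sequence of left $\Q G$-modules $0\to\Q G\oplus\Q G\,e\to\Q G^{k}\to\Q G\to\Q\to0$; since $\Q G\,e$ is a direct summand of $\Q G$ this is a length-two \emph{projective} resolution of $\Q$. As $\mathcal{U}(G)$ is a $\Q$-algebra, $\Hop_*(G; \mathcal{U}(G))=\Tor_*^{\Z G}(\mathcal{U}(G),\Z)$ may be computed from it, and applying $\mathcal{U}(G)\otimes_{\Q G}(-)$, together with the identification $\mathcal{U}(G)\otimes_{\Q G}\Q G\,e\cong\mathcal{U}(G)\,e$, reproduces the displayed complex with the same Fox-Jacobian and augmentation differentials. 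Hence its homology is $\Hop_*(G; \mathcal{U}(G))$, as claimed. The complex is concentrated in degrees $0,1,2$, so $\Hop_n(G; \mathcal{U}(G))=0$ and $b_n^{(2)}(G)=0$ for $n\ge3$; and $b_0^{(2)}(G)=\tfrac1{\abs G}$ by Review~\ref{Rev}\eqref{eq:zero}, which is $0$ since $G$ is infinite — it is an HNN extension, so the stable letter $y$ of Notation~\ref{Not:HNN} has infinite order. Together this is the claimed range $(0)\vee[3{\uparrow}\infty[$.

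\emph{Part~(i).} Here $e=\tfrac1m\sum_{c\in C_m}c$ is the (self-adjoint) averaging idempotent of the finite cyclic group $C_m\le G$, so $e=e^2=e^*$ in $\mathcal{N}(G)$ and $\tr(e)=\tfrac1m$. Given $q\in\mathcal{U}(G)$ and $a\in\C G$ with $qea=0$, I would first dispose of the case $ea=0$ and assume $ea\ne0$, aiming for $qe=0$. The idea is to localize the question to the idempotent $e$: write $qea=(qe)(ea)$, and let $f\in\mathcal{N}(G)$ be the left support (range projection) of the nonzero element $ea$; since $e\cdot(ea)=ea$ one has $0<f\le e$. The key move is then to invoke the strong Atiyah conjecture for $G$, which gives $\tr(f)=\dim_{\mathcal{U}(G)}\mathcal{U}(G)(ea)\in\tfrac1m\Z$; combined with $0<\tr(f)\le\tr(e)=\tfrac1m$ this forces $\tr(f)=\tr(e)$, hence $f=e$ (a subprojection of equal trace in the finite von Neumann algebra $\mathcal{N}(G)$). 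Therefore the left annihilator of $ea$ in the von Neumann regular ring $\mathcal{U}(G)$ is $\{x:xf=0\}=\{x:xe=0\}=\mathcal{U}(G)(1-e)$, so $qea=(qe)(ea)=0$ forces $qe\in\mathcal{U}(G)(1-e)$, i.e.\ $(qe)e=0$, i.e.\ $qe=0$.

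\emph{Where the work is.} The only ingredient beyond the excerpt is the strong Atiyah conjecture for $G$ over $\C$ (or just for a single element of $\C G$ acting on $\mathcal{U}(G)$) — equivalently, that the division closure $\mathcal{D}G$ of $\C G$ in $\mathcal{U}(G)$ is semisimple Artinian with $e$ a primitive idempotent, its rank being the least positive value $\tfrac1m$ of the von Neumann dimension; this also permits the variant endgame of right-multiplying $qea=0$ by a $b\in\mathcal{D}G$ with $eab=e$. This is precisely the ingredient that \cite{DicksLinnell} takes from Linnell's theorem in the one-relator case. I would obtain it here from what is already proved: by Theorem~\ref{thm:torsion}(\ref{it:10}) and~(\ref{it:12}), $G$ has a torsion-free finite-index subgroup that absorbs every finite subgroup up to conjugacy, so Linnell's reduction brings the conjecture for $G$ down to that subgroup, which is locally indicable by Theorem~\ref{thm:torsion}(v) and carries the HNN decomposition of Notation~\ref{Not:HNN} — the configuration treated in the one-relator analysis. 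I expect carrying out this reduction cleanly, and checking that it functions with complex (not merely rational) coefficients, to be the main obstacle.
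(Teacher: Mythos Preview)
Your argument for part~(ii) is correct and matches the paper's: apply Corollary~\ref{Cor:Qexact} with $Q=\Q$ to get a length-two projective $\Q G$-resolution of~$\Q$, tensor up to~$\mathcal U(G)$, and use Review~\ref{Rev}\eqref{eq:zero} together with $\abs{G}=\infty$ for $n=0$.

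For part~(i) you take a genuinely different route. The paper does not go through the strong Atiyah conjecture at all: it simply invokes \cite[Theorem~3.1(iii)]{DicksLinnell}, which states that the conclusion of~(i) holds for any group of the shape ((free)~$\rtimes C_m$) by (locally indicable), and then observes that Theorem~\ref{thm:torsion}(ii),(iii) put $G$ exactly in that class --- the normal subgroup $\gen{\lsup{G}{C_m}}$ is $K\rtimes C_m$ with $K$ free, and the quotient $G/\gen{\lsup{G}{C_m}}$ is locally indicable. That is the whole proof.

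Your approach would also work in principle, but the step you flag as ``the main obstacle'' really is one. Reducing to a torsion-free finite-index subgroup and noting it is locally indicable does \emph{not} by itself give the strong Atiyah conjecture over~$\C$: that is open for general locally indicable groups (it would imply Kaplansky's zero-divisor conjecture). To push it through you would have to exploit the specific structure --- essentially the same (free~$\rtimes C_m$)-by-(locally indicable) decomposition --- at which point you are reproving the black box \cite[Theorem~3.1(iii)]{DicksLinnell} rather than bypassing it. The paper's route is shorter precisely because that theorem, built on \cite{BurnsHale72} and \cite{Linnell92}, is already tailored to deliver the zero-divisor statement~(i) directly, without passing through the full Atiyah machinery.
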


\begin{proof} (i). Recall that~\cite[Theorem~3.1(iii)]{DicksLinnell}, which depends on
results in~\cite{BurnsHale72} and~\cite{Linnell92},
asserts that if $G$  is (( free  $\rtimes~C_m$) by (locally indicable)),
then (i) holds.  By Theorem~\ref{thm:torsion}(ii),(iii),
we now see that (i) holds.

(ii). By using
Corollary~\ref{Cor:Qexact} and Review~\ref{Rev}\eqref{eq:zero}, it is not difficult to
see that (ii) holds.
\end{proof}

Let us now consider the special case of hyperbolic  \ors groups.

\begin{Lem}\label{Lem:main} Let $k \in [3{\uparrow}\infty[$\,,
let $F\coloneqq  \gp{x_{[1{\uparrow}{k}]}}{\quad}\vspace{1mm}$,
and let $w$ and $r$ be elements of $F$.  Suppose
either that  $w= [x_1,x_2]\prod x_{[3{\uparrow}k]}^2$ or that $k$ is even and
$w = \prod_{j\in[1{\uparrow}\frac{k}{2}]}[x_{2j-1},x_{2j}]$.
Suppose that
 $r$  is a Hempel relator for $\prs{x_{[1{\uparrow}{k}]}}{w}$.
Let $G\coloneqq \gp{x_{[1{\uparrow}k]}}{w, r}\vspace{1mm}$.
 Then    $b_2^{(2)}(G) = 0$.
\end{Lem}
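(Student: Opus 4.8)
The plan is to deduce $b_2^{(2)}(G)=0$ from the exact sequence of Corollary~\ref{Cor:Qexact} (equivalently, from Lemma~\ref{Lem:text}(ii)) together with the known formula for the Euler characteristic. First I would reduce to the case where $r$ is a Hempel relator and the presentation is of the type appearing in Corollary~\ref{Cor:Qexact}: apply Review~\ref{Rev}\eqref{eq:chi} to record that $\chi^{(2)}(G)=\chi(G)=-k+2+\frac{1}{m}$ (using Corollary~\ref{Cor:chiors}, noting $m=m'=m''$ here since $r\ne 1$), and use Lemma~\ref{Lem:text}(ii) to see that $b_n^{(2)}(G)=0$ for all $n\notin\{1,2\}$, while $b_0^{(2)}(G)=0$ because $G$ is infinite (Review~\ref{Rev}\eqref{eq:zero}). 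Hence $\chi^{(2)}(G)=-b_1^{(2)}(G)+b_2^{(2)}(G)$, so it suffices to show $b_1^{(2)}(G)=k-2-\frac{1}{m}$, or, what is the same, to compute the $\mathcal{U}(G)$-dimension of the relevant homology groups from the four-term complex.

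Next I would work directly with the complex $0\to\mathcal{U}(G)\oplus\mathcal{U}(G)e\xrightarrow{\partial_2}\mathcal{U}(G)^k\xrightarrow{\partial_1}\mathcal{U}(G)\to 0$ of Lemma~\ref{Lem:text}(ii), whose homology is $\Hop_*(G;\mathcal{U}(G))$. The top map $\partial_1$ has matrix $(x_i-1)_{i=1}^k$; since $k\ge 3$ and $G$ is infinite, at least one $x_i$ has infinite order in $G$ (indeed one can arrange $x_1-1$ invertible in $\mathcal{U}(G)$ by Review~\ref{Rev}\eqref{Einvertible}, as $x_1$ generates an infinite cyclic subgroup — this needs a brief check that the image of $x_1$ has infinite order, which follows since its image in $G/\gen{\lsup{G}{C_m}}$, a locally indicable hence torsion-free group by Theorem~\ref{thm:torsion}(ii), is nontrivial once we note $x_1$ is not killed). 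Consequently $\partial_1$ is surjective with $\dim_{\mathcal{U}(G)}\ker\partial_1=k-1$, giving $\Hop_0(G;\mathcal{U}(G))=0$ and reducing the problem to the submodule $\ker\partial_1\subseteq\mathcal{U}(G)^k$ together with $\partial_2$.

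The crux is then the injectivity (in the von~Neumann sense, i.e.\ $\dim$-injectivity) of $\partial_2\colon\mathcal{U}(G)\oplus\mathcal{U}(G)e\to\mathcal{U}(G)^k$: this is exactly what forces $\Hop_2(G;\mathcal{U}(G))=0$. Here I would use Lemma~\ref{Lem:text}(i), the "no zero-divisor across $e$" statement that comes from $G$ being $((\text{free}\rtimes C_m)\text{ by locally indicable})$: if $(p,qe)\partial_2=0$ then the two rows of the matrix contribute independently, and the Magnus--Fox-derivative rows $\frac{\partial w}{\partial x_i}$ and $\frac{\partial r}{\partial x_i}$ — being the boundary maps of the aspherical (adjusted) $2$-complex for a one-relator-type presentation — are themselves injective after tensoring up, by the Lyndon/one-relator argument already invoked in the proof of Theorem~\ref{Thm:exact} and in \cite{DicksLinnell}. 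Concretely, the argument mirrors \cite[Theorem~4.2 / Theorem~5.1]{DicksLinnell}: reduce via the retraction $\mathcal{U}(G)e\hookrightarrow\mathcal{U}(G)$ and Lemma~\ref{Lem:text}(i) to ordinary injectivity of a one-relator Fox-derivative matrix over $\mathcal{U}(G)$, which holds because the associated one-relator group (or its torsion-free finite-index subgroup, via Review~\ref{Rev}\eqref{eq:index} and Theorem~\ref{thm:torsion}(\ref{it:10})) is of type FL with a length-$2$ resolution and the Atiyah-type flatness of $\mathcal{U}(G)$ over $\Z G$ turns the exact sequence of Corollary~\ref{Cor:Qexact} (take $Q=\C$) into an exact sequence of $\mathcal{U}(G)$-modules after applying $\mathcal{U}(G)\otimes_{\Z G}(-)$. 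Exactness at the left term is precisely $\Hop_2(G;\mathcal{U}(G))=0$, hence $b_2^{(2)}(G)=0$.

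I expect the main obstacle to be the last step — establishing that $\partial_2$ has trivial kernel after passing to $\mathcal{U}(G)$, i.e.\ that no nontrivial $\mathcal{U}(G)$-linear combination of the two relator rows vanishes. The orientable case of this is \cite[proof of Theorem~5.1]{DicksLinnell}; the present non-orientable case is designed to go through verbatim once Lemma~\ref{Lem:text}(i),(ii) and Theorem~\ref{thm:torsion} are in hand, so the proof should be short: cite Lemma~\ref{Lem:text}, invoke $\chi^{(2)}(G)=\chi(G)$ from Review~\ref{Rev}\eqref{eq:chi} and the value from Corollary~\ref{Cor:chiors}, and conclude $b_2^{(2)}(G)=\chi(G)+b_1^{(2)}(G)-b_0^{(2)}(G)$ vanishes because the $\mathcal{U}(G)$-chain complex of Lemma~\ref{Lem:text}(ii) is exact at its left end.
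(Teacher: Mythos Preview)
Your proposal has a genuine gap at the crux: there is no ``Atiyah-type flatness of $\mathcal{U}(G)$ over $\Z G$'' that lets you tensor the exact sequence of Corollary~\ref{Cor:Qexact} and retain exactness at the left. Tensoring with $\mathcal{U}(G)$ over $\Z G$ is only right exact, and the resulting homology at the left term is \emph{by definition} $\Hop_2(G;\mathcal{U}(G))$, whose vanishing is exactly what you are trying to prove. So the final sentence of your proposal is circular. Likewise, the attempt to ``reduce to ordinary injectivity of a one-relator Fox-derivative matrix over $\mathcal{U}(G)$'' does not work: the kernel of $\partial_2$ consists of pairs $(p,qe)$ for which a \emph{linear combination} of the two rows vanishes, and Lemma~\ref{Lem:text}(i) by itself does not decouple them. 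Your Euler-characteristic bookkeeping is fine but only yields $b_2^{(2)}(G)-b_1^{(2)}(G)=\chi(G)$; it cannot separate the two summands without an independent argument that one of them is zero.

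What the paper actually does is quite different and uses the \emph{specific form} of~$w$. One supposes $\partial_2$ has a nonzero kernel element $(p,qe)$, so $p\,\frac{\partial w}{\partial x_j}+qe\,\frac{\partial r}{\partial x_j}=0$ for all~$j$. Set $V\coloneqq p+qe\,\C G$ and $G_V\coloneqq\{g\in G\mid Vg=V\}$. Using Lemma~\ref{Lem:text}(i) and injectivity of $\partial_2$ over $\C G$ (Corollary~\ref{Cor:Qexact}) one shows $0\notin V$, and then, via a $G_V$-tree with vertex set $V$ and torsion stabilizers, that $G_V$ is virtually free. The heart of the argument is then to compute the Fox derivatives $\frac{\partial w}{\partial x_j}$ explicitly: the commutator part gives $x_1,x_2\in G_V$, and each square $x_j^2$ gives $\frac{\partial w}{\partial x_j}=u(1+x_j)$, forcing $x_j\in G_{\pm V}\coloneqq\{g\mid Vg=\pm V\}$. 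Hence $G_{\pm V}=G$, so $[G{:}G_V]\le 2$, $G$ is virtually free, $\vcd G\le 1$, and $b_2^{(2)}(G)=0$. This use of the explicit shape of the surface relator is the missing idea in your outline.
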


\begin{proof}  Let $m \coloneqq \log_{F}r$,
let $C$ denote the subgroup of $G$ generated by
the image of $\sqrt[F]{r}$, and
let $e\coloneqq \frac{1}{m} \sum\limits_{c\in C}c \in \C G$.
If the map
$$
\mathcal{U}(G) \oplus \mathcal{U}(G)\,e
\xrightarrow{
\begin{pmatrix}
 \frac{ \partial w}{\partial x_{1}} &\cdots & \frac{ \partial w}{\partial x_{k}}
 \\[.2cm] \frac{ \partial r}{\partial x_{1}}  &\cdots & \frac{ \partial r}{\partial x_{k}}
\end{pmatrix}
}
\mathcal{U}(G)^{k}
$$
is injective, then $b^{(2)}(G) = 0$, by~Lemma~\ref{Lem:text}(ii).
It remains to consider the case where there exists
some $(p,qe) \ne (0,0)$ in the kernel of this map,
that is, $p$,\,$ q \in \mathcal{U}(G)$ and
\begin{equation}\label{Eker1}
\text{ for each $j \in [1{\uparrow}k]$,} \quad  p \textstyle\frac{ \partial w}{\partial x_j}
 +  qe\frac{ \partial r}{\partial x_j}  = 0 \text { in }
\mathcal{U}(G).
\end{equation}

Let $G$ act
by right multiplication
on the set of all
subsets of $\mathcal{U}(G)$,  let $V \coloneqq p + qe\mathbb{C}G$, a subset of $\mathcal{U}(G)$,
and let $G_V \coloneqq  \{g \in G \mid Vg =  V\}= \{g \in G \mid p(g-1) \in qe\C G\}$, a subgroup of~$G$.
The subset
$V$ of $\mathcal{U}(G)$ is then closed under the right $G_V$-action on $\mathcal{U}(G)$.
By Lemma~\ref{Lem:text}(i), the surjective map
$e\mathbb{C}G \to qe\mathbb{C}G$, $ea \mapsto qea$,
is either injective or zero.  In either event,
$qe\mathbb{C}G$ is a projective right
$\mathbb{C}G$-module, and hence a projective right $\mathbb{C}G_V$-module.
By the left-right dual
of~\cite[Corollary~5.6]{DicksDunwoody06},
there exists a right $G_V$-tree $T$ with finite edge stabilizers
and vertex set the right $G_V$-set   $p+ qe\C G$.

We claim that $0 \not \in p+ qe\C G$.  Since $(p,qe)\ne (0,0)$, we may assume that $qe \ne0$ for
this argument.
Consider any $a\in \mathbb{C}G$.
By Corollary~\ref{Cor:Qexact}\vspace{2mm}, with $Q = \C$,
the map $$\mathbb{C}G \oplus \mathbb{C}Ge
\xrightarrow{
\begin{pmatrix}
 \frac{ \partial w}{\partial x_{1}} &\cdots & \frac{ \partial w}{\partial x_{k}}
 \\[.2cm] \frac{ \partial r}{\partial x_{1}}  &\cdots & \frac{ \partial r}{\partial x_{k}}
\end{pmatrix}
}
\mathbb{C}G^{k}$$ is injective, and, in particular, $(-ea,e)$ does not lie in the kernel, since $e \ne 0$.
Hence there exists some $j \in [1{\uparrow}k]$,
such that $\textstyle 0 \ne -ea \frac{ \partial w}{\partial x_j}
 + e \frac{ \partial r}{\partial x_j}
= e(-a \frac{ \partial w}{\partial x_j}  +   \frac{ \partial r}{\partial x_j})\vspace{1mm}$.
By Lemma~\ref{Lem:text}(i),
$$\textstyle 0 \ne qe(-a \frac{ \partial w}{\partial x_j}  +   \frac{ \partial r}{\partial x_j})
=  (-qea) \frac{ \partial w}{\partial x_j}  +  qe \frac{ \partial r}{\partial x_j}.$$
It is now clear from~\eqref{Eker1} that $p \ne -qea$.  This proves that $0 \not\in p + qe\C G$, as claimed.

By using Review~\ref{Rev}\eqref{Einvertible}, we now find that each vertex stabilizer for $T$ is torsion.
Hence, by  Theorem~\ref{thm:torsion}\eqref{it:12},  each vertex stabilizer for $T$
has order at most $m$.
It follows that $G_V$ is virtually free; see, for
example,~\cite[Theorem~IV.1.6,(b)$\Rightarrow$(c)]{DicksDunwoody89}.

Let $$G_{\pm V} \coloneqq \{g \in G \mid\text{ either } p(g-1)\in qe\mathbb{C}G \text{ or } p(g+1) \in qe\mathbb{C}G \}
= \{g \in G \mid Vg = \pm V \},$$ a
subgroup of $G$.   We shall see that  $G_{\pm V} = G$.

By~\eqref{Eker1}, we see that,  for each $j \in [1{\uparrow}k]$,
 $p\,\frac{\partial w}{\partial x_j} \in qe\mathbb{C}G$, and, hence,
\begin{align}
 p\,(1 - x_1x_2\overline x_1 ) = p\,\textstyle
\frac{\partial w}{\partial x_1} &\in qe\mathbb{C}G,\label{Eker21}
 \\
  p\,(x_1 - [x_1,x_2]) = p\,\textstyle
\frac{\partial w}{\partial x_2}  &\in qe\mathbb{C}G,\label{Eker22}
\\
\text{for all $j \in [3{\uparrow}k]$, \,\, }p\, \textstyle
\frac{\partial w}{\partial x_j} &\in qe\mathbb{C}G. \label{Eker24}
\end{align}
By~\eqref{Eker21}, $\lsup{x_1}{x_2} \in G_V$.
Right multiplying~\eqref{Eker22} by $\overline x_1$, we see that
$ p\,(1-\lsup{x_1x_2}{\overline x_1}) \in qe\mathbb{C}G$.  Thus
$\lsup{x_1x_2}{x_2}$ and $\lsup{x_1x_2}{x_1}$ lie in $G_{V}$.  Hence, their product $x_1x_2$ lies in $G_{V}$,
and then $x_1$ and $x_2$ lie in~$G_{V}$.
In particular, $V[x_1,x_2]=V$.

We claim that $x_{[3{\uparrow}k]} \subseteq G_{\pm V}$.
Arguing inductively, we suppose that $j \in [3{\uparrow}k]$
and   $x_{[1{\uparrow}(j-1)]} \subseteq G_{\pm V}$. \vspace{1mm}
For this step, we shall consider only the non-orientable case, $w=[x_1,x_2]\prod x_{[3{\uparrow}k]}^2$; the
\vspace{1mm} orientable case is similar, and was done in~\cite[Lemma~5.15]{DicksLinnell}.
Let $u = [x_1,x_2]\textstyle\prod
 x_{[3{\uparrow}(j-1)]}^2$.  Then $u\in G_V$, that is, $p-pu \in qe\C G$.
Now $\frac{\partial w}{\partial x_j} = u(1+x_j)$, and,
by~\eqref{Eker24},   $pu(1+x_j) \in qe\C G$. Summing these two elements of $qe\C G$,
 we see that $p + pux_j \in qe\C G$.
Thus $ux_j \in G_{\pm V}$, and, hence, $x_j \in G_{\pm V}$.

By induction, the claim is proved.

Hence $G_{\pm V} = G$, and,  hence $[G{:}G_V]\le 2$, and, hence, $G$ is virtually free.
  Thus $\vcd G \le 1$, and, hence, $b_2^{(2)}(G) = 0$.
\end{proof}

Recall that, for any \ors group $G$, a formula for $\chi(G)$ was given in
Corollary~\ref{Cor:chiors}.

\begin{Thm} Let $G$ be a \ors group.  Then $G$ is of type VFL
and, for each $n \in [0{\uparrow}\infty[$\,,
\begin{equation*}
b_n^{(2)}(G) = \begin{cases}
\max\{\chi(G),0\} = \frac{1}{\abs{G}} &\text{if $n = 0$,}\\
\max\{-\chi(G),0\} &\text{if $n = 1$,}\\
0 &\text{if $n \in [2{\uparrow}\infty[\,$.}
\end{cases}
\end{equation*}
\end{Thm}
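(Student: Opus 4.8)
The plan is to reduce everything to the case analysis already isolated in Remarks~\ref{Rems:ors}. By Lemma~\ref{Lem1} together with Lemma~\ref{Lem:Hempelments}, every \ors group $G$ falls into one of two situations: either (Cases~1 and~2 of Remarks~\ref{Rems:ors}) $G$ is finitely generated and virtually one-relator, or (Case~3, which occurs only when $k\ge3$) $G$ admits a presentation $\gp{x_{[1{\uparrow}k]}}{w,r}$ of the kind appearing in Corollary~\ref{Cor:Qexact}, that is, with $w$ in orientable form or $w=[x_1,x_2]\prod x_{[3{\uparrow}k]}^2$, and with $r$ a Hempel relator. I would treat these two situations separately.

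In the virtually one-relator situation there is nothing further to do: Lemma~\ref{Lem:vor} states exactly that such a $G$ is of type VFL and that its $L^2$-Betti numbers are $\tfrac1{\abs G}$ in degree~$0$, $\max\{-\chi(G),0\}$ in degree~$1$, and $0$ in all higher degrees.

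In the Hempel situation, I would assemble the conclusion from the material of Sections~\ref{sec:HNN}--\ref{sec:L2}. Corollary~\ref{Cor:Qexact} already gives that $G$ is of type VFL and that $-\chi(G)=k-2-\tfrac1m\ge0$, so that $\max\{\chi(G),0\}=0$ and $\max\{-\chi(G),0\}=-\chi(G)$. The HNN decomposition of Notation~\ref{Not:HNN} maps $G$ onto $C_\infty=\gp{y}{\quad}$, so $G$ is infinite and hence $b_0^{(2)}(G)=\tfrac1{\abs G}=0$ by Review~\ref{Rev}\eqref{eq:zero}. By Lemma~\ref{Lem:text}(ii), $b_n^{(2)}(G)=0$ for every $n\ge3$; and the essential input, Lemma~\ref{Lem:main}, supplies $b_2^{(2)}(G)=0$. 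With all $L^2$-Betti numbers except possibly $b_1^{(2)}(G)$ now known to vanish, the $L^2$-Euler-characteristic identity of Review~\ref{Rev}\eqref{eq:chi} reads $\chi(G)=\chi^{(2)}(G)=-b_1^{(2)}(G)$, whence $b_1^{(2)}(G)=-\chi(G)=\max\{-\chi(G),0\}$, as required.

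All the real content is in the lemmas being invoked; the only step that is not bookkeeping is Lemma~\ref{Lem:main}, whose proof reduces the vanishing of $b_2^{(2)}$ to the Hempel--Howie structure theory developed in Sections~\ref{sec:HNN}--\ref{sec:exact}, and that is the step I expect to be the main obstacle. Granting it, the Theorem is just the case split plus the Euler-characteristic identity.
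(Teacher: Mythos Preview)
Your proposal is correct and follows essentially the same route as the paper: split into the virtually one-relator case (handled by Lemma~\ref{Lem:vor}) and the Hempel-relator case, then in the latter combine Lemma~\ref{Lem:text}(ii) and Lemma~\ref{Lem:main} to kill $b_n^{(2)}(G)$ for $n\ne 1$ and recover $b_1^{(2)}(G)$ from the Euler-characteristic identity~\eqref{eq:chi}. The paper's proof is terser but structurally identical; your version simply spells out the intermediate observations ($G$ infinite, $-\chi(G)\ge 0$) that the paper leaves implicit.
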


\begin{proof} If $G$ is   virtually one-relator, then, by Lemma~\ref{Lem:vor}, the desired conclusions hold.
Thus, we may assume that $G$ is \textit{not}  virtually  one-relator.
It then follows from Remarks~\ref{Rems:ors} that
we may assume that $G$ has a presentation as in Lemma~\ref{Lem:main} and
that $G$ is of type VFL.  Then,  by Lemma~\ref{Lem:main} and
Lemma~\ref{Lem:text}(ii),
$b_2^{(n)}(G) = 0$ for all $n \in (0) \vee [2{\uparrow}\infty[$\,.
 By Review~\ref{Rev}\eqref{eq:chi},   $\chi(G) = -b_1^{(2)}(G)$, as desired.
\end{proof}

\newpage

\setcounter{section}{0}
\renewcommand\thesection{\text{A.}\arabic{section}}
\setcounter{Thm}{1}

\bigskip

\bigskip

\centerline{\LARGE\textbf{APPENDIX}}

\bigskip

\centerline{\LARGE\textbf{Howie towers via Bass-Serre Theory}}

\bigskip

In this appendix, we use Bass-Serre Theory to prove some of
Howie's results on local indicability.

We shall use~\cite{DicksDunwoody89} as our reference  for Bass-Serre theory.

Throughout, let $F$ be a group.

\section{Actions on trees}\label{BS}

\begin{Not}
Let  $E$ be a subset of an $F$-set $X$.

Two elements of $E$ that lie in the same $F$-orbit in $X$ are said to be
\textit{glued together by} $F$, and we write
 $\glue(F,E) \coloneqq \{g\in F \mid gE \cap E \ne \emptyset\}$.
\hfill\qed
\end{Not}

\begin{Defs} Let $T = (T,VT,ET, \iota,\tau)$ be an $F$-tree.

\medskip

\noindent(i). Let $r$ be an element of $F$   that   fixes no vertex of $T$.

The smallest $\gen{r}$-subtree of~$T$,
 denoted $\axis(r)$, has the form of the real line, and
$r$ acts  on it by shifting it; see,
for example,~\cite[Proposition~I.4.11]{DicksDunwoody89}.
We write $\Eaxis(r)\coloneqq E(\axis(r))$.

The \textit{$(F,ET)$-support} of $r$ is defined as
$$\supp(r)\coloneqq    \{Fe  \in F\leftmod ET \mid e \in \Eaxis(r)\}.$$

For all $f \in F$, $\axis(\lsup{f}{r}) = f\axis(r)$ and
$\supp(\lsup{f}{r}) = \supp(r)$.

For all $n \in [1{\uparrow}\infty[$\,,  $\axis(r^n) = \axis(r)$.

If  $F$ acts freely on $ET$, then $r$ has a root in $F$.

If $\glue(F, \Eaxis(r)) = \gen{r}$, then $\sqrt[F]{r} = r$.

\medskip

\noindent(ii). Let $<$ be a (total) ordering of $F\leftmod ET$.

A subset $R$  of $F$  is said to be \textit{$(F,ET,{<})$-staggered}
 if each element of $R$ fixes no vertex of~$T$ and, for each $(r_1, r_2) \in R \times R$,
exactly one of the following three conditions holds:
\vskip-0.8cm \null
\begin{list} {}
\item $\lsup{F}\!{r}_1 = \lsup{F}\!{r}_2$;
\item $\min (\supp(r_1), <) \,\,  < \,\, \min(\supp(r_2),<)$  and  $\max (\supp(r_1), <) \,\, < \,\, \max(\supp(r_2),<)$;
\vskip-0.7cm \null
\item $\min (\supp(r_2), <) \,\,  < \,\, \min(\supp(r_1),<)$   and  $\max (\supp(r_2), <) \,\, < \,\, \max(\supp(r_1),<)$.
\end{list}

\medskip

\noindent When either of the latter two conditions holds,
we say that $r_1$ and $r_2$ have \textit{staggered supports} (with respect to $<$).

\medskip

\noindent(iii). A subset $R$  of $F$  is said to be \textit{$(F,ET)$-staggerable}
 if
there exists an ordering $<$  of $F\leftmod ET$
such that $R$ is $(F,ET,{<})$-staggered.
\hfill\qed
\end{Defs}

\begin{Ex}\label{Ex:free}
For a free product $F = A{\ast}B$,
the \textit{Bass-Serre tree} is the $F$-graph  $T$ with  vertex set   the disjoint union of
$F/A$ and $F/B$
and  edge set   $F$, in which  each $f \in ET=F$ has initial vertex $fA$
and terminal vertex $fB$.
It can be shown that $T$ is a tree; see, for example,~\cite[Theorem~I.7.6]{DicksDunwoody89}.
Notice that $F$ acts \textit{freely} on the edge set of $T$.

Let $r \in F$.

Clearly, $r$  fixes some vertex of $T$ if and only if $r$ lies in some conjugate of
$A$ or some conjugate of $B$.

If   $r$ fixes no vertex of $T$, then
there exists some $n \in [1{\uparrow}\infty[$\,, and
some sequence $a_{[1{\uparrow}n]} $ in $ A -\{1\}$, and some sequence $b_{[1{\uparrow}n]}$ in $ B-\{1\}\vspace{1mm}$
such that some conjugate of $r$
can be expressed in the form
$\lsup{f}{r} =    \textstyle \prod_{i\in[1{\uparrow}n]} (a_ib_i)\vspace{1mm}.$
The entire conjugacy class $\lsup{F}{r}$  can be represented by writing
$\prod_{i\in[1{\uparrow}n]} (a_ib_i)$ cyclically. \vspace{1mm}

Here, $\supp(r) = \{F \} = F\leftmod ET$.  If $<$ denotes the unique ordering of $F\leftmod ET$, then
$\{r\}$ is $(F,ET,{<})\vspace{1mm}$-staggered.

If $r=    \textstyle \prod_{i\in[1{\uparrow}n]} (a_ib_i)\vspace{1mm}$,
then $\gen{r}\leftmod \axis(r) = \{\gen{r}w \mid w \text{ is an initial subword of } r \}$
and there is  a direct description of $\sqrt[F]{r}$ and $\log_Fr$, as follows.
We can write $\log_Fr = \frac{n}{m}$, where $m$ is smallest element of
  $ [1{\uparrow}n]$ with the property  that, for each $i \in [1{\uparrow}(n-m)]$,
$a_i = a_{i+m}$ and $b_i = b_{i+m}$.  Here,
$\sqrt[F]{r} = \prod\limits_{i\in[1{\uparrow}m]} (a_ib_i)$.  \hfill\qed
\end{Ex}

\section{Staggerability}

\begin{Defs}  A group is said to be \textit{indicable} if \textit{either} it  is  trivial \textit{or} it   has
some infinite, cyclic quotient.
A group is said to be \textit{locally indicable} if every finitely generated subgroup is indicable.
\hfill\qed
\end{Defs}

For any subset $R$ of $F$ in which each element of $R$ has a root in $F$,
we let $\sqrt [F]R$ denote the set of roots of elements of $R$ in $F$.
In this section, we let $F$ be a locally indicable group,
and let $T$ be an $F$-tree with trivial edge stabilizers, and
let $R$  be an $(F,ET)$-staggerable subset of $F$ with $\sqrt[F]{R} = R$.
We will prove that  $F/\gen{\lsup{F}\!{R}}$ is locally indicable,
and   that $\gen{\lsup{F}\!{R}}$ acts freely on $T$,
or, equivalently,  the natural map
$F \onto F/\gen{\lsup{F}\!{R}}$ is injective on the vertex stabilizers.
The following result deals with an extreme case.

\begin{Lem}\label{Lem:ind} Let $F$ be a locally indicable group,
let $T$ be an $F$-tree with trivial edge stabilizers, and let
$R$ be an $(F,ET)$-staggerable subset of $F$ such that $\sqrt[F]{R}=R$.

Suppose that $F$ is finitely generated and that $F\leftmod T$ is finite.\vspace{1mm}

Then $F/\gen{\lsup{F}\!{R}}$ is indicable; if $F/\gen{\lsup{F}\!{R}}$ is trivial then
$F$ acts freely on $T$ and, for each $r \in R$,
$\glue(F, \Eaxis(r)) =  \gen{r}$.
\end{Lem}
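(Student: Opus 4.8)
The plan is to establish the statement by combining a rational-homology count, which settles all but a degenerate family of configurations at once, with an induction on $|F\leftmod ET|$ that peels relators against the staggering, in the spirit of the classical towers of Magnus and Howie.

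First I would unpack the Bass--Serre data. Since $F$ acts on $T$ with trivial edge stabilizers and $F\leftmod T$ is a finite graph $Y$, the group $F$ is the fundamental group of a finite graph of groups over $Y$ with trivial edge groups; equivalently $F$ is the free product of its finitely many vertex stabilizers together with a free group of rank $l=b_1(Y)$. As $F$ is finitely generated, each vertex stabilizer is finitely generated (Kurosh), and being a subgroup of the locally indicable group $F$ it is indicable or trivial. Hence the rational rank of $F^{\mathrm{ab}}$ is at least $k+l$, where $k$ is the number of vertex orbits with nontrivial stabilizer. I would also record that $\sqrt[F]{R}=R$ forbids proper powers in $R$ and, since elements of $R$ fix no vertex, that no element of $R$ is conjugate into a vertex stabilizer.

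The engine is a numerical consequence of staggerability: if $<$ witnesses that $R$ is $(F,ET,{<})$-staggered, the defining trichotomy shows that distinct conjugacy classes $\lsup{F}{r}$ (for $r\in R$) have distinct values of $\min(\supp(r),{<})$ in $F\leftmod ET$; since $|F\leftmod ET|=k'+l-1$ with $k'\ge k$ the total number of vertex orbits, $R$ meets at most $k'+l-1$ conjugacy classes, and conjugate elements have the same image in $F^{\mathrm{ab}}$. Therefore the rational rank of $(F/\normgen{R})^{\mathrm{ab}}$ is at least $(k+l)-(k'+l-1)=1-(k'-k)$. When there are no vertex orbits with trivial stabilizer this is positive, so $F/\normgen{R}$ surjects onto $\Z$; it is then indicable and nontrivial, and the second assertion holds vacuously. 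This reduces matters to the degenerate configurations, where some vertex orbit has trivial stabilizer and the bound need not be positive; here I would induct on $|F\leftmod ET|$, peeling a $<$-extreme edge orbit $e$. If $e$ lies in no $\supp(r)$, then deleting it from $Y$ either disconnects $Y$ --- giving a splitting $F=F_1\ast F_2$ over the trivial subgroup into which $R$ distributes after harmless conjugation of its elements, so the inductive hypothesis applies to each factor and we reassemble --- or it does not disconnect $Y$, producing a free $\Z$-factor that raises the rational rank of $(F/\normgen{R})^{\mathrm{ab}}$ and so is excluded under degeneracy. If $e$ does lie in some $\supp(r)$, the staggering forces $e$ to occur only in the support of the unique conjugacy class $\lsup{F}{r_0}$ realizing the extreme value; splitting $F$ along $e$ over the trivial subgroup exhibits $r_0$ as crossing $e$ essentially while every other relator is conjugate into the complementary vertex group $F^{\flat}$, to which the inductive hypothesis applies with the remaining relators, and the final step is a one-relator reduction that removes the class of $r_0$. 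Once $F/\normgen{R}$ is seen to be trivial in all these cases, the freeness of the $F$-action on $T$ and the equalities $\glue(F,\Eaxis(r))=\gen{r}$ are read back off the splittings, using $\sqrt[F]{r}=r$ and the elementary properties of axes and supports in Section~\ref{BS}.

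The step I expect to be the main obstacle is closing this induction at the peeled relator class: one must verify that after removing $e$ the induced relator system over $F^{\flat}$ (or over each free factor) is again staggerable, is again free of proper powers relative to the smaller group, and does not reabsorb the peeled class, and one must carry out the one-relator reduction compatibly enough with the splitting --- and with the condition $\sqrt[F]{r_0}=r_0$, which is what prevents the quotient from acquiring finite cyclic torsion --- that triviality of the quotient, freeness of the action, and the equality $\glue(F,\Eaxis(r))=\gen{r}$ all survive simultaneously. The additional bookkeeping around vertex orbits with trivial stabilizer, and dually vertex stabilizers whose abelianization has rank $\ge2$, is the secondary difficulty, precisely because these are where the crude homology count stops being decisive.
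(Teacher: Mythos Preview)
Your rational-homology count is correct and disposes of the case where every vertex orbit has nontrivial stabilizer: with $k'=k$ the bound gives $\mathrm{rk}_{\Q}\,(F/\normgen{R})^{\mathrm{ab}}\ge 1$, so the quotient surjects onto $\Z$ and the second clause is vacuous. But this does not shorten the argument. In the remaining case you still propose to induct on $\abs{F\leftmod ET}$ by peeling a $<$-extreme edge orbit, and nothing about the presence of a trivial vertex stabilizer makes that induction easier; the count is a detour, not a reduction, and the induction you sketch is essentially the paper's proof with its hardest step left open.

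The paper runs a single uniform induction on $\abs{F\leftmod ET}$ with no homology preliminaries. It splits along the orbit $Fe$ realising the $<$-maximum over $\bigcup_{r\in R}\supp(r)$, which isolates one class $\lsup{F}{r}$ and distributes the rest into $R_\iota$, $R_\tau$. The governing dichotomy is whether both $F_\iota/\normgen{R_\iota}$ and $F_\tau/\normgen{R_\tau}$ admit infinite cyclic quotients: if so, $F/\normgen{R-\lsup{F}{r}}$ has rank-two abelianisation and adding $r$ leaves an infinite cyclic quotient. If not, the inductive hypothesis makes (say) the $\iota$-side trivial, freely acting, and satisfying all its glue equalities. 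What you call the ``one-relator reduction'' is then an argument, tracking how $\axis(r)$ enters and leaves $T_\iota$, that forces $e$ to be nonseparating in $F\leftmod T$ and yields $F=F_\iota\ast\gen{r}$ --- but this step \emph{requires} the condition $\glue(F,\Eaxis(r))=\gen{r}$. When that condition fails for the initial choice $r=r_\maxx$, the paper does not simply push through: it uses the glue equalities now established for $R_\iota$ to extract a second pair $(r_\iota,e_\iota)$, with $Fe_\iota$ the $<$-minimum over $R_\iota$-supports, for which the glue condition \emph{does} hold, and reruns the split along $Fe_\iota$. This two-pass choice of splitting edge is exactly what makes indicability, freeness of the action, and the equalities $\glue(F,\Eaxis(r))=\gen{r}$ come out simultaneously, and it is precisely the mechanism your outline does not supply.
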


\begin{proof}
We argue by induction on $\abs{F\leftmod ET}$.
Since  $F$ is a finitely generated, locally indicable group,
$F$  is indicable, which means that the implications hold when $R$ is empty.
Thus we  may assume that $R$ is  non-empty;
in particular, $\abs{F\leftmod ET} \ge 1$.
By induction, we may suppose  that   the
implications hold for all smaller values of $\abs{F\leftmod ET}$.

We may assume that $R = \lsup{F}\!{R}$, and we may  assume that we are given
an ordering   $<$ of the finite set $F\leftmod ET$
such that $R$ is $(F,ET, {<})$-stag\-gered.
In particular,  there exists some
$e_\maxx \in \bigcup\limits_{r  \in R} \Eaxis(r)$
such that\vspace{-5mm}  $$Fe_\maxx
\quad = \quad \max(\{Fe \mid e \in \bigcup\limits_{r  \in R} \Eaxis(r)\},\,\, <).\vspace{-2mm}$$
There then exists some $r_\maxx \in R$ such that $e_\maxx \in \Eaxis(r_\maxx)$,
and, by the definition of $(F,ET,{<})$-stag\-gered,
$Fe_\maxx$ does not meet the axis of any element of
$R - \lsup{F}\!{(r_\maxx)}$.
Thus there exists some pair $(r,e)$, for example, $(r_\maxx, e_\maxx)$, such that the following hold.
\begin{align}
&r \in R \text{, } e \in \Eaxis(r) \text{, and }
  Fe \text{ does not meet the axis of any element of } R- \lsup{F}\!{r}. \label{eq:disj}
 \\
&\glue(F, \Eaxis(r)) = \gen{r}
\text{ and/or }  (r,e) = (r_\maxx,e_\maxx).\label{eq:max}
\end{align}

In the forest $T - Fe$,  let $T_\iota$ denote the component containing
$\iota e$, and let $T_\tau$ denote the component containing $\tau e$.
Let $F_\iota$ denote the $F$-stabilizer of $\{T_\iota\}$,
and let   $F_\tau$ denote the $F$-stabilizer of $\{T_\tau\}$.
Let $R_\iota \coloneqq R \cap F_\iota$ and $R_\tau \coloneqq R \cap F_\tau$.
By~\eqref{eq:disj}, for each $r' \in R - \{\lsup{F}\!{r}\}$, $\axis(r')$ lies in $T - Fe$ and hence lies in a
component of $T - Fe$.  It follows that $\lsup{F}\!{R}_\iota  \,\cup \,\lsup{F}\!{R}_\tau \, \cup\,  \lsup{F}\!{r}$ is all of $R$.
Notice that if $F\{T_\iota\} = F\{T_\tau\}$ then  $\lsup{F}\!{R}_\iota  = \lsup{F}\!{R}_\tau$.

By applying the Bass-Serre Structure Theorem to
 the $F$-tree whose vertices are the components of $T - Fe$,
and whose edge set is $Fe$, with $fe$ joining $f T_\iota$ to $f T_\tau$, we see that
\begin{equation}\label{eq:preconn}
F = \begin{cases}
F_\iota {\ast} F_\tau &\text{if $F\{T_\iota\} \ne F\{T_\tau\}$,}\\
F_\iota {\ast} \gp{f}{\quad}&\text{if  $f \in F$ and $f T_\iota  =   T_\tau $.}
\end{cases}
\end{equation}
Hence,
\begin{equation}\label{eq:conn}
F/\normgen{R-\{r\}} =
\begin{cases}
(F_\iota/\normgen{R_\iota}) {\ast} (F_\tau/\normgen{R_\tau}) &\text{if $F\{T_\iota\} \ne F\{T_\tau\}$,}\\
(F_{\iota}/\normgen{R_{\iota}}) {\ast} \gp{f}{\quad}&\text{if  $f \in F$ and $f T_\iota  =   T_\tau $.}
\end{cases}
\end{equation}

Consider the case where both
$F_\iota/\normgen{R_\iota}$ and $F_\tau/\normgen{R_\tau}$ have
infinite, cyclic quotients.  By~\eqref{eq:conn},
$F/\normgen{R-\{r\}}$ has a rank-two, free-abelian quotient.
On incorporating $r$, we see that $F/\normgen{R}$ has an
infinite, cyclic quotient, and the desired conclusion holds.

Thus, it remains to consider the case where one of
$F_\iota/\normgen{R_\iota}$,  $F_\tau/\normgen{R_\tau}$  does not have an
infinite, cyclic quotient; by replacing $e$ with $\overline e$, if necessary,
we may assume  that $F_\iota/\normgen{R_\iota}$ does not have an infinite,
cyclic quotient.

By the induction hypothesis applied to $(F_\iota, T_\iota, R_\iota)$,
we see that $F_{\iota}/\normgen{R_{\iota}}$ is trivial,
that $F_{\iota}$ acts freely on $T_{\iota}$, and,
for each $r_{\iota} \in R_{\iota}$, $\glue(F_\iota, \Eaxis(r_\iota)) =  \gen{r_\iota}$,
and, hence, $\glue(F, \Eaxis(r_\iota)) =  \gen{r_\iota}$.

By replacing $r$ with $\overline r$ if necessary, we may assume the following.
\begin{equation}\label{eq:path}
\text{There exists a  segment of $\axis(r)$ of the form $e, p, re$.}
\end{equation}

If $F\{T_\iota\} \ne F\{T_\tau\}$, then, by~\eqref{eq:path}, we have a path
$\overline r p$ in $\axis(r)$ from $\overline r \tau e\!\in\!\overline r T_\tau\!\ne\!T_\iota$\linebreak to $\iota e \in T_\iota$.
Now $\overline r p$ necessarily enters $T_\iota$ through an edge of the form $g\overline e $ where\linebreak
$g \in F_\iota$ and $\overline e$ is the inverse of the edge $e$.
Notice that $g \in \glue(F_\iota, \Eaxis(r)) -\gen{r}$. This proves the following.
\begin{equation}\label{eq:disconnected2}
\text{If   $\glue(F_\iota, \Eaxis(r)) \subseteq \gen{r}$  then
$F\{T_\iota\} = F\{T_\tau\}$.}
\end{equation}

Consider the case where $R_\iota$ is empty.
Here, $F_\iota\!=\!F_{\iota}/\normgen{R_{\iota}}\!=\!\{1\}$.
By~\eqref{eq:disconnected2},   $F\{T_\iota\} = F\{T_\tau\}$,\linebreak and, then,
by~\eqref{eq:preconn},  $F = \gp{t}{\quad}$.
Now $\{t, \overline t\} = \sqrt[F]{F} \supseteq R = \{r\}$.
Hence, $F = \gen{r}$, and, hence, $\glue(F, \Eaxis(r)) = \gen{r}$
 This proves the following.
\begin{equation}\label{eq:empty}
\text{If  $\glue(F, \Eaxis(r)) \ne \gen{r}$ then $R_\iota$ is non-empty.}
\end{equation}

\medskip

\noindent\textbf{Case 1.}   $\glue(F, \Eaxis(r)) = \gen{r}$.

Here, by~\eqref{eq:disconnected2},   $F\{T_\iota\} = F\{T_\tau\}$.
Hence, $F(VT_\iota) = VT$ and $R = \lsup{F}\!{R}_\iota  \cup  \lsup{F}\!{r}$.
Let $v \in VT$.  We wish to show that $F_v = 1$,
and, we may assume that $v \in VT_\iota$.
Here $F_v \le F_{\iota}$, and, since $F_\iota$ acts freely on $T_\iota$,  $F_v = 1$, as desired.
Thus $F$ acts freely on $T$.
In~\eqref{eq:path}, the path $p$ from $\tau e$ to $r\iota e$ in $\axis(r)$ does not
meet $Fe$, and, hence, $p$ stays within $T_\tau$, and, hence $r\iota e \in T_\tau$.
Thus, $r T_\iota = T_\tau$, and, by~\eqref{eq:preconn},   $F = F_\iota {\ast} \gp{r}{\quad}$.
Since  $F_{\iota}/\normgen{R_{\iota}}$ is trivial, we see that
$F/\normgen{R}$ is trivial, and, hence, $F/\normgen{R}$ is indicable.
Here all the required conclusions hold.

\medskip

\noindent \textbf{Case 2.} $\glue(F, \Eaxis(r)) \ne \gen{r}$.

By~\eqref{eq:empty},  $R_\iota$ is non-empty, and, hence,  there exists some
$e_\iota \in \bigcup\limits_{r_\iota  \in R_\iota} \Eaxis(r_\iota)$
such that\vspace{-2mm}  $$Fe_\iota
= \min(\{Fe \mid e \in \bigcup\limits_{r_\iota  \in R_\iota} \Eaxis(r_\iota)\},\,\, <).\vspace{-1mm}$$
 There then exists some
 $r_\iota \in R$ such that $e_\iota \in \Eaxis(r_\iota)$, and we
then know that\linebreak $\glue(F, \Eaxis(r_\iota))\!=\!\gen{r_\iota}$.
By~\eqref{eq:max}, $(r,e)\!=\!(r_\maxx,e_\maxx)$.
Using the definition of $(F,T,{<})$-stag\-gered, one can show that
$Fe_\iota$ does not meet the axis of any element of
$R - \lsup{F}\!{r}_\iota$.
We then replace $(r,e)$ with $(r_\iota, e_\iota)$,
and, by Case 1, all the required conclusions hold.

\medskip

This completes the proof.
\end{proof}

We next deal with the case where $R$ is finite.

\begin{Thm}\label{Thm:howie0} Let $F$ be a locally indicable group,
let $T$ be an $F$-tree with trivial edge stabilizers, and let
$R$ be an $(F,ET)$-staggerable subset of $F$ such that $\sqrt[F]{R}= R$.

Suppose that  $R$ is finite and  that $H$ is a finitely generated subgroup
of $F$  such that $H$ contains $R$, and
 $H/\gen{\lsup{H}\!{R}}$ has no infinite, cyclic quotient.

Then there exists some finitely generated subgroup $F'$ of $F$
such that  $F'$ contains $H$, $R$ is $(F',ET)$-staggerable,
$F'/\gen{\lsup{F'}\!{R}}$ is trivial,
$F'$ acts freely on $T$ and, for each $r \in R$,
$\glue(F', \Eaxis(r)) =  \gen{r}$.
Here, $H \le F' = \gen{\lsup{F'}\!{R}} \le  \gen{\lsup{F}\!{R}}$, and, hence,  $H$
acts freely on $T$.
\end{Thm}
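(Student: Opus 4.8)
The plan is to deduce the theorem from Lemma~\ref{Lem:ind} by throwing away most of the tree. Since $R\subseteq H$ and every element of a staggerable set fixes no vertex, $H$ has no global fixed point in $T$, hence a (unique) minimal invariant subtree $T_H$; because $H$ is finitely generated and acts minimally on $T_H$, the quotient $H\leftmod T_H$ is finite, and this is the one place finite generation of $H$ is used. The edge stabilizers of $T_H$ are trivial, being edge stabilizers of $T$, and for each $r\in R$ the axis $\axis(r)$, being the minimal $\gen{r}$-invariant subtree, lies in the $\gen{r}$-invariant subtree $T_H$, so $\Eaxis(r)\subseteq ET_H$ and the $(H,ET_H)$-support of $r$ is defined. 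Finally $\sqrt[H]{R}=R$: for $r\in R$, the hypothesis $\sqrt[F]{r}=r$ says $\gen{r}$ is a maximal infinite cyclic subgroup of $F$, hence the only infinite cyclic subgroup of $F$ containing $\gen{r}$ is $\gen{r}$; the same then holds in $H$, so $\sqrt[H]{r}=r$.

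Granting, for the moment, that $R$ is $(H,ET_H)$-staggerable, Lemma~\ref{Lem:ind} applies to the triple $(H,T_H,R)$ and shows that $H/\gen{\lsup{H}{R}}$ is indicable. By hypothesis this group has no infinite cyclic quotient, so it is trivial; hence $H=\gen{\lsup{H}{R}}$, the group $H$ acts freely on $T_H$, and $\glue(H,\Eaxis(r))=\gen{r}$ for every $r\in R$. Since $T_H$ is $H$-invariant, nearest-point projection $T\to T_H$ is $H$-equivariant, so the triviality of vertex and edge stabilizers propagates from $T_H$ to all of $T$, i.e.\ $H$ acts freely on $T$; and $R$, having all its supports inside $ET_H$, is $(H,ET)$-staggerable by extending the ordering of $H\leftmod ET_H$ arbitrarily over $H\leftmod ET$. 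Thus $F'\coloneqq H$ satisfies every assertion, and the relations $H\le F'=\gen{\lsup{F'}{R}}\le\gen{\lsup{F}{R}}$ then hold automatically.

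It remains to make $R$ staggerable over $H$, which I expect to be the main obstacle. I would fix an ordering $<$ of $F\leftmod ET$ witnessing $(F,ET,{<})$-staggeredness of $R$ and take $<_H$ to be any ordering of $H\leftmod ET_H$ refining the pullback of $<$ along the natural surjection $\pi\colon H\leftmod ET_H\to F\leftmod ET$ (order by the $<$-value of the image, ties within a fibre broken arbitrarily). Then $\pi$ maps the $(H,ET_H)$-support of each $r\in R$ onto its $(F,ET)$-support and, by the refinement, sends minima to minima and maxima to maxima; consequently, for a pair $r_1,r_2\in R$ that is neither $H$- nor $F$-conjugate the $F$-staggered inequalities transfer verbatim to $<_H$, while for an $H$-conjugate pair the first staggered alternative holds. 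The one case left open is a pair $r_1,r_2\in R$ that is $F$-conjugate but not $H$-conjugate: here the $(F,ET)$-supports coincide, so refining $<$ is useless, and nested $(H,ET_H)$-supports could not be staggered by any ordering of the fibre. I would close this case by showing that for such a pair the two $(H,ET_H)$-supports are disjoint, i.e.\ that when $r_2=\lsup{g}{r_1}$ with $g\notin H$ no $H$-orbit of edges meets both $\Eaxis(r_1)$ and $\Eaxis(gr_1)$ -- which should follow from the triviality of edge stabilizers together with $\sqrt[F]{r_1}=r_1$ bounding the overlap of $\axis(r_1)$ with a translate -- after which the supports of elements of $R$ fall into blocks that are pairwise equal or disjoint, and any linear order on these blocks induces a suitable $<_H$. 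Should that approach fail, the fallback is to enlarge $H$ by a conjugating element for each offending pair, obtaining a still finitely generated group over which $R$ is staggerable, and to iterate this as a finite tower that terminates by peeling off a maximal-support relator exactly as in the proof of Lemma~\ref{Lem:ind}, tracking at each stage that a homomorphism to $\Z$ killing $R$ must vanish on the image of $H$; when the tower closes, the top group plays the role of $F'$ in the argument of the second paragraph.
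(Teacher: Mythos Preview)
Your reduction to Lemma~\ref{Lem:ind} via the minimal $H$-subtree is natural, but the step you flag as ``the main obstacle'' is a genuine gap, and your proposed fix fails. The disjointness claim is false: take $F=\gp{a,b}{\quad}$ with its Cayley tree, $r_1=a^2b$, $r_2=\lsup{a}{r_1}=a^3ba^{-1}$, and $H=\gen{r_1,r_2}$. Abelianising shows $a\notin H$, so $r_1,r_2$ are not $H$-conjugate; yet the edge from $a$ to $a^2$ lies in both $\Eaxis(r_1)$ and $\Eaxis(r_2)$, so their $H$-supports meet. In this particular example the two supports, though overlapping, happen to be staggerable, but you have no argument ruling out proper nesting in general, and even pairwise staggerability would not obviously yield a single global ordering of $H\leftmod ET_H$ that staggers all pairs simultaneously. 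Your fallback (``enlarge $H$ by a conjugating element \ldots\ iterate as a finite tower'') gestures at the right idea but supplies no termination mechanism: adjoining conjugators can create new offending pairs.

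The paper's proof avoids the staggerability-over-$H$ problem entirely by never passing directly to $H$. Instead it fixes a finite subtree $Y$ containing a generating set's worth of basepoints and alternates two moves: a ``Type~1'' step replacing $(F,T)$ by $(\gen{S\cup\glue(F,EY)},\,F'Y)$, and a ``Type~2'' step replacing $F$ by the kernel $N$ of a surjection $F/\normgen{R}\twoheadrightarrow\Z$. The crucial point is that staggerability is preserved at each step: Type~1 does not change $\glue(F,EY)$ and embeds the new orbit space in the old one, while Type~2 passes to a \emph{normal} subgroup with infinite cyclic quotient, so the new orbit space is $(F\leftmod ET)\times\Z$ and an $F$-conjugate pair that is not $N$-conjugate acquires supports differing by a nonzero $\Z$-shift, hence staggered lexicographically. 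Termination comes from the finiteness of $\glue(F,EY)$, which Type~2 strictly shrinks. This normality-plus-cyclic-quotient structure is exactly what makes the staggerability transfer work, and it is absent when one jumps straight to an arbitrary finitely generated $H$.
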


\begin{proof}  Let $v$ be an arbitrary vertex of $T$.
Choose a finite generating set $S$ of $H$ such that $S$ contains $R \cup \{1\}$, and let
$Y$ be the smallest subtree of $T$ containing $Sv$.
Then, for each $s \in S$, $sVY \cap VY $ is non-empty since it contains $sv$; thus
\begin{equation}\label{eq:Y}
S \subseteq \glue(F,VY).
\end{equation}
If $F'$ is any subgroup of $F$ and $T'$ is any
$F'$-subtree of $T$, for the purposes of this proof
let us say that  $(F',T')$ is an \textit{admissible} pair
if  $F' \supseteq S$, and   $T' \supseteq Y$, and
$R$ is $(F',ET')$-staggerable.
By hypothesis, $(F,T)$ is admissible.
Let $<$ be an ordering of $F\leftmod ET$ such that $R$ is
$(F,ET,{<})$-staggered.

\medskip

\noindent \textbf{The Type 1 transformation.}  Suppose that $F \ne \gen{S \cup \,\, \glue(F, EY)}$ or $T \ne FY$.
Define $F'\coloneqq\gen{S \cup \,\, \glue(F, EY)}$ and $T'\coloneqq F'Y$.  We shall prove that
$(F',T')$  is an admissible pair
and $\glue(F,EY) =\glue(F',EY)$.

\medskip

Clearly
\begin{equation}\label{eq:y}
\glue(F,EY) =\glue(F',EY).
\end{equation}
It follows from~\eqref{eq:Y} that $$S \,\,\cup\,\, \glue(F', EY) \quad \subseteq \quad \glue(F',Y).$$
If we consider the set of components of the $F'$-forest $F'Y$ in $T$ as an $F'$-set,
we see that the component containing $Y$ is fixed by a generating set of $F'$, and
hence is fixed by $F'$.  This component must then be all of $F'Y$.
Hence $T'$ is connected,  and, hence, $T'$ is an $F'$-tree.

It is straightforward to show that
\begin{equation}\label{eq:t'}
\glue(F, ET')   =  F'.
\end{equation}
By~\eqref{eq:t'},  the natural map
from $F'\leftmod ET'$ to $F \leftmod ET$ is an embedding;
we again denote by $<$ the ordering of $F'\leftmod ET'$
induced from $F\leftmod ET$.
We claim that, with the conjugation action by $F$ on $F$,
$\glue(F,R) \subseteq F'$. Suppose that $f \in F$, that $r \in R$, and that   $\lsup{f}{r} \in R$.
Then $\axis(r)$ and $\axis(\lsup{f}{r})$ lie in $T'$
and are glued together by $f$ since $ \axis(r^f) = f\axis(r)$.
By~\eqref{eq:t'},   $f \in F'$, and the claim is proved.
It follows that $R$ is $(F',ET',{<})$-staggered.
Hence $(F',T')$ is admissible.
This completes the verification of the Type 1 transformation.

\medskip

\noindent \textbf{The Type 2 transformation.}  Suppose that $F = \gen{S \cup \,\, \glue(F, EY)}$,
that $T = FY$, and that  $F/\gen{\lsup{F}\!{R}}$  has some infinite, cyclic quotient $F/N$;
here, $N$ is a normal subgroup of $F$ such that $N \geq \gen{\lsup{F}\!{R}}$.
We shall prove that $(N,T)$ is admissible
and $\glue(N, EY) \subset \glue(F, EY)$.

\medskip

Since $H/\gen{\lsup{H}\!{R}}$ has no infinite, cyclic quotient, it follows that $H \subseteq N$.

Since $F/N$ is cyclic, we can choose $x \in F$ such that $xN$ generates $F/N$.
Then $F = \gen{x}N$.  Since $F/N$ is infinite,
 $\gen{x} \cap N = \{1\}$.

We now  give $N\leftmod ET$ an ordering.
Give $(F\leftmod ET) \times \Z$ the lexicographic ordering and
choose a left $F$-transversal $E_0$ in $ET$.
Then $\gen{x}E_0$ is a left
$N$-transversal in $ET$, and there is a bijective map
$$(F\leftmod ET) \times \Z \quad \to \quad  N\leftmod ET, \quad (Fe, n) \mapsto  Nx^ne,  (e,n)  \in E_0 \times \Z.$$
Let $<_N$ denote the ordering induced on $N\leftmod ET$ by this bijection.

We claim that $R$ is $(N,ET,{<_N})$-staggered.
To see this, consider any  $r_1$, $r_2 \in R$ such that $\lsup{N}{r}_1 \ne \lsup{N}{r}_2$.
If $\lsup{F}{r}_1  = \lsup{F}{r}_2$, then $r_1$ and $r_2$ will have equal
$(F,ET)$-supports, while their $(N,ET)$-supports, viewed in $(F\leftmod ET) \times \Z$,
 will differ by a non-zero shift in the second coordinate,
and hence be staggered.
If $\lsup{F}{r}_1  \ne \lsup{F}{r}_2$, then   $r_1$ and $r_2$ will have  staggered
$(F,ET)$-supports,  and, hence, their $(N,ET)$-supports, viewed in
$(F\leftmod ET) \times \Z$, will also be staggered.

Hence $(N,T)$ is admissible.

Since   $N \not \supseteq F = \gen{S \cup \glue(F,EY)}$, we see that
$N \not \supseteq  S \cup \glue(F,EY)$.
Since $N \supseteq S$,  we see that
$N \not \supseteq  \glue(F,EY)$, and, hence, $\glue(N,EY) \subset \glue(F,EY)$.
This completes the verification of the Type 2 transformation.

\medskip

Since $EY$ is finite and edge stabilizers are trivial, $\glue(F, EY)$ is
finite. Notice that transformations of Type 2 reduce   $\glue(F, EY)$,
while transformations of Type 1 do  not change  $\glue(F, EY)$.
Notice that we cannot apply two  transformations of  Type 1 consecutively.
  Thus, after applying a finite number of  transformations of
Types 1 and  2, we arrive at a  pair $(F',T')$
such that $F' = \gen{S \cup \glue(F',EY)}$, $T' = F'Y$, $R$ is $(F',ET')$-staggerable, and
$F'/\gen{R^{F'}}$  has no infinite, cyclic quotient.
By Lemma~\ref{Lem:ind},  $F'/\gen{\lsup{F'}\!{R}}$ is trivial,
$F'$ acts freely on $T'$, and, for each $r \in R$,
$\glue(F', \Eaxis(r)) =  \gen{r}$.   Since $F'$ acts freely on $T'$,
it follows that $F'$ acts freely on all of $T$.  Since $R$ is $(F',ET')$-staggerable,
it follows that $R$ is $(F',ET)$-staggerable, because any
ordering on $F'\leftmod ET'$ can be extended to some ordering of
$F'\leftmod ET$, by the axiom of choice.
\end{proof}

The finite descending chain of subgroups implicit in the
above argument is the chain of subgroups considered by Howie in
his tower arguments.

We now have a general result.

\begin{Cor}\label{Cor:main} Let $F$ be a locally indicable group,
let $T$ be an $F$-tree with trivial edge stabilizers, and let
$R$ be an $(F,ET)$-staggerable subset of $F$.

Then $\gen{\lsup{F}\!{R}}$ acts freely on $T$,
that is, each vertex stabilizer embeds in $F/\gen{\lsup{F}\!{R}}$ under the natural map.

If, moreover,  $\sqrt[F]{R}= R$, then $F/\gen{\lsup{F}\!{R}}$ is locally indicable.
\end{Cor}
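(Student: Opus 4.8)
The plan is to deduce both assertions from Theorem~\ref{Thm:howie0} (the finitely generated, finite-relator case) and Lemma~\ref{Lem:ind} by a reduction to finitely generated data, after first disposing of the root hypothesis in the first assertion.

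\emph{Removing the root hypothesis.} For the statement that $\gen{\lsup{F}{R}}$ acts freely on $T$, I would first replace $R$ by $\sqrt[F]{R}$. This is legitimate: since the edge stabilizers are trivial, $F$ acts freely on $ET$, so each $r\in R$ — each fixing no vertex, as a member of a staggered set — has a root in $F$; moreover $\sqrt[F]{R}$ is again $(F,ET)$-staggerable, because $\axis(\sqrt[F]{r})=\axis(r)$ (hence $\supp(\sqrt[F]{r})=\supp(r)$) and roots of conjugate elements are conjugate, so the staggering of supports is inherited, and clearly $\sqrt[F]{\sqrt[F]{R}}=\sqrt[F]{R}$; and $\gen{\lsup{F}{R}}\le\gen{\lsup{F}{\sqrt[F]{R}}}$, so freeness of the larger action forces freeness of the smaller. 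So we may assume $\sqrt[F]{R}=R$; this forces $\sqrt[F]{r}=r$ for each $r\in R$, whence every subset $R_0$ of $R$ is $(F,ET)$-staggerable with $\sqrt[F]{R_0}=R_0$.

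\emph{Localization and the tower.} Both conclusions concern finitely generated subgroups, and every element of $\gen{\lsup{F}{R}}$, and every finite generating set of a subgroup of $F/\gen{\lsup{F}{R}}$, involves only finitely many conjugates of finitely many members of $R$. So, given a non-trivial $g\in\gen{\lsup{F}{R}}$ fixing some vertex $v$ of $T$ — respectively, assuming for a contradiction that $F/\gen{\lsup{F}{R}}$ has a non-trivial finitely generated subgroup $\overline H$ with no infinite cyclic quotient — I would pick a finite $R_0\subseteq R$ and a finitely generated $H\le F$ with $R_0\subseteq H$ such that $g\in\gen{\lsup{H}{R_0}}$, respectively such that the natural map $\pi\colon F\onto F/\gen{\lsup{F}{R}}$ satisfies $\pi(H)=\overline H$; and then apply Theorem~\ref{Thm:howie0} to $(F,R_0,H)$. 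If $H/\gen{\lsup{H}{R_0}}$ has no infinite cyclic quotient, the theorem gives $H\le\gen{\lsup{F}{R_0}}\le\gen{\lsup{F}{R}}$ and that $H$ acts freely on $T$, so in the first case $g$ fixes no vertex and in the second $\overline H=\pi(H)$ is trivial — contradictions in both. Otherwise $H/\gen{\lsup{H}{R_0}}$ has an infinite cyclic quotient $H/N$, and I would re-run the tower argument of Theorem~\ref{Thm:howie0}: pass to $N$ (acting on $T$ with trivial edge stabilizers), observe that $g$ lies in the $N$-normal closure of a finite conjugate-set $R_0'$ of $R_0$ that is $(N,ET)$-staggerable via the lexicographic re-ordering of $N\backslash ET$ by the $H/N$-grading used in the Type~2 transformation, with $\sqrt[N]{R_0'}=R_0'$, restrict to a finitely generated subgroup of $N$, and iterate; this terminates for the same reason as in the proof of Theorem~\ref{Thm:howie0}, the finite set $\glue(F,EY)$ attached to a suitable finite subtree $Y$ strictly decreasing at each step, with Lemma~\ref{Lem:ind} at the bottom.

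\emph{Main obstacle.} All the substance is in Theorem~\ref{Thm:howie0} and Lemma~\ref{Lem:ind}; the residual work is the bookkeeping above, and the one genuine subtlety is reconciling the ``no infinite cyclic quotient'' hypothesis of Theorem~\ref{Thm:howie0} with an arbitrary finitely generated $H$: one must check that when $\overline H$ has no infinite cyclic quotient the finite set $R_0$ can be enlarged so that $H/\gen{\lsup{H}{R_0}}$ also has none — using that $\gen{\lsup{F}{R}}=\bigcup_{R_0}\gen{\lsup{F}{R_0}}$ is an increasing union and $H$ has finitely generated abelianization — and, in the alternative case, that the descent to $N$ really can be iterated to termination. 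Once the first assertion is established, the second follows: under $\sqrt[F]{R}=R$, a non-trivial finitely generated $\overline H\le F/\gen{\lsup{F}{R}}$ with no infinite cyclic quotient would, by the above, yield a finitely generated $H\le F$ with $H\le\gen{\lsup{F}{R}}$ yet $\pi(H)=\overline H\ne 1$, which is absurd.
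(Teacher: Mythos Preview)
Your outline is essentially correct and matches the paper's strategy: reduce to $\sqrt[F]{R}=R$, then feed finitely generated data into Theorem~\ref{Thm:howie0}, using a finitely-generated-abelianization argument to force the ``no infinite cyclic quotient'' hypothesis in the local-indicability step. Two simplifications in the paper's execution eliminate your ``otherwise, re-run the tower'' branch entirely.

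First, for the freeness assertion the paper takes, for any finite $R'\subseteq\lsup{F}{R}$, simply $H\coloneqq\gen{R'}$. Then $H/\gen{\lsup{H}{R'}}$ is trivial, so the hypothesis of Theorem~\ref{Thm:howie0} is automatic and the theorem gives at once that $\gen{R'}$ acts freely on $T$; letting $R'$ exhaust $\lsup{F}{R}$ finishes. Your choice of $R_0\subseteq R$ together with an independent $H$ containing the conjugators is what creates the possibility that $H/\gen{\lsup{H}{R_0}}$ has an infinite cyclic quotient, and hence the need for your descent.

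Second, for local indicability the paper also works with finite $R_0\subseteq\lsup{F}{R}$ rather than $R_0\subseteq R$. Given finite $S\subseteq F$ with image $\overline H$ having finite abelianization of exponent $d$, one finds finite $R_0\subseteq\lsup{F}{R}$ with $S^d\subseteq\gen{S}'\gen{R_0}$, and then $\gen{S\cup R_0}/\normgen{R_0}$ has abelianization of exponent at most $d$, so again Theorem~\ref{Thm:howie0} applies directly. Your version (``enlarge $R_0$ inside $R$, keep $H$'') does not quite close up, because the needed $F$-conjugators may lie outside $H$; allowing $R_0\subseteq\lsup{F}{R}$ --- which is harmless since $\lsup{F}{R}$ is still $(F,ET)$-staggerable with $\sqrt[F]{(\lsup{F}{R})}=\lsup{F}{R}$ --- fixes this and again removes the need for your tower re-run.
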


\begin{proof} Since $\gen{\lsup{F}\!{R}} \le \gen{\lsup{F}\!{(\sqrt[F]{R})}}$,
and $\sqrt[F]{R}$ is again $(F,ET)$-staggerable,
we may assume that $\sqrt[F]{R}= R$.

 We first show that
 $\gen{\lsup{F}\!{R}}$ acts freely on $T$.
Let $R'$ be an arbitrary finite subset of $\lsup{F}\!{R}$,
and let $H = \gen{R'}$.  On applying
Theorem~\ref{Thm:howie0}, we see that
$\gen{R'}$ acts freely on $T$.  It then follows that
all of  $\gen{\lsup{F}\!{R}}$ acts freely on $T$.

We now show that $F/\gen{\lsup{F}\!{R}}$ is locally indicable.
Consider an arbitrary
finitely generated subgroup of $F/\gen{\lsup{F}\!{R}}$,
and express it in the form
$(\gen{S}\,\gen{\lsup{F}\!{R}})/\gen{\lsup{F}\!{R}}$ where
 $S$ is a finite subset of~$F$.
It remains to show that $(\gen{S}\,\gen{\lsup{F}\!{R}})/\gen{\lsup{F}\!{R}}$ is indicable.
We may assume that $(\gen{S}\,\gen{\lsup{F}\!{R}})/\gen{\lsup{F}\!{R}}$
has no infinite, cyclic quotient, and it remains to show that
$\gen{S} \le \gen{\lsup{F}\!{R}}$.

For the purposes of this proof, let  $\gen{S}'$ denote the derived subgroup of $\gen{S}$,
and let $\gen{S}^{\text{ab}}$ denote the abelianization $\gen{S}/\gen{S}'$.
Now $$(\gen{S}\,\gen{\lsup{F}\!{R}})/(\gen{S}'\gen{\lsup{F}\!{R}})
 = ((\gen{S}\,\gen{\lsup{F}\!{R}})/\gen{\lsup{F}\!{R}})^{\text{ab}}$$ which is a finite abelian group by supposition.
Let $d$ denotes its exponent.  Then\linebreak $S^d  \subseteq \gen{S}'\,\gen{\lsup{F}\!{R}}$
 and, hence, there exists some finite subset
 $R_0$ of $\lsup{F}\!{R}$ such that  $S^d$ lies in the set $\gen{S}'\gen{R_0}$.
Then, $(\gen{S \cup R_0}/\normgen{R_0})^{\text{ab}}$ is an abelian group of exponent at most~$d$,
and, hence, $\gen{S \cup R_0}/\normgen{R_0}$ has no infinite, cyclic quotient, and, hence,
by Theorem~\ref{Thm:howie0}, $\gen{S \cup R_0} \le  \gen{\lsup{F}\!{R}}$, as desired.
\end{proof}

\section{Consequences}

The main application is the following.

\begin{Thm}[Howie]\label{Thm:Howie} Let $A$ and $B$ be locally indicable groups and let
 $r$ be an element of $ A{\ast} B$ such that
$r$ is not conjugate to any element of $(A \cup B)-\{1\}$.  Then
the natural maps from $A$ and $B$ to $(A {\ast} B)/\normgen{r}$ are injective.
If, moreover, $\sqrt[A {\ast} B]{r} = r$,   then
$(A {\ast} B)/\normgen{r}$  is locally indicable.
\end{Thm}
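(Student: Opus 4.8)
The plan is to deduce the theorem from Corollary~\ref{Cor:main}, applied to the Bass-Serre tree of the free product. First I would set $F \coloneqq A {\ast} B$ and record that $F$ is itself locally indicable: any non-trivial finitely generated subgroup $H$ of $F$ has, by the Kurosh subgroup theorem (with Grushko's theorem to see that the decomposition is finite with finitely generated factors), a free-product decomposition into finitely many finitely generated free factors, each of which is either infinite cyclic or a finitely generated subgroup of a conjugate of $A$ or of $B$; in every case the factor in question, being a retract of $H$, provides an infinite cyclic quotient of $H$. Hence $F$ is locally indicable, which is what is needed to invoke the appendix.

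Next I would take $T$ to be the Bass-Serre tree of $F = A{\ast}B$ described in Example~\ref{Ex:free}. Here $F$ acts freely on the edge set $ET = F$, so in particular $T$ has trivial edge stabilizers, and $F\leftmod ET$ is a one-element set. Put $R \coloneqq \{r\}$; we may assume $r \ne 1$, since otherwise $\normgen{r}$ is trivial and all the assertions are immediate (and $F$ has already been shown to be locally indicable). Since $r \ne 1$ and $r$ is not conjugate to any element of $(A \cup B) - \{1\}$, the element $r$ lies in no conjugate of $A$ and in no conjugate of $B$, so by Example~\ref{Ex:free} it fixes no vertex of $T$, $\supp(r) = F\leftmod ET$, and $\{r\}$ is $(F, ET, {<})$-staggered for the unique ordering $<$ of the one-element set $F\leftmod ET$ (the staggering condition reduces to the case $r_1 = r_2 = r$, where $\lsup{F}{r}_1 = \lsup{F}{r}_2$). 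Thus $R$ is $(F, ET)$-staggerable.

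Now I would apply Corollary~\ref{Cor:main}. Its first assertion yields that $\gen{\lsup{F}{R}} = \normgen{r}$ acts freely on $T$ and that every vertex stabilizer of $T$ embeds into $F/\normgen{r}$ under the natural map. But the vertex stabilizers of the Bass-Serre tree of $A{\ast}B$ are precisely the conjugates of $A$ and the conjugates of $B$; in particular the natural maps $A \to (A{\ast}B)/\normgen{r}$ and $B \to (A{\ast}B)/\normgen{r}$ are injective, which is the first claim. For the remaining claim, note that $F$ acts freely on $ET$, so by Example~\ref{Ex:free} the element $r$ has a root in $F$; hence if $\sqrt[F]{r} = r$ then $\sqrt[F]{R} = R$, and the \emph{moreover} part of Corollary~\ref{Cor:main} gives that $F/\gen{\lsup{F}{R}} = (A{\ast}B)/\normgen{r}$ is locally indicable.

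I do not expect a genuine obstacle here: all the substantial work has been carried out in Lemma~\ref{Lem:ind}, Theorem~\ref{Thm:howie0} and Corollary~\ref{Cor:main}, and what remains is to match the free-product data to the hypotheses of Corollary~\ref{Cor:main}. The only two points needing a word of care are the observation that a free product of locally indicable groups is locally indicable (the Kurosh/Grushko argument above) and the essentially trivial remark that staggerability of a single relator is automatic in the free-product tree because there is only one edge-orbit.
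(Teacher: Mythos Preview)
Your proposal is correct and follows essentially the same route as the paper's proof: both establish that $A\ast B$ is locally indicable via the Kurosh subgroup theorem, take $T$ to be the Bass--Serre tree of the free product with its single edge-orbit and trivial edge stabilizers, observe that $\{r\}$ is automatically $(F,ET)$-staggerable when $r\ne 1$, and then invoke Corollary~\ref{Cor:main} for both conclusions. The only cosmetic difference is that the paper phrases the Kurosh step as ``all these free factors are indicable, hence $H$ is indicable'' rather than via retractions, and does not name Grushko explicitly.
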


\begin{proof}  Let  $T$ be the Bass-Serre  $(A {\ast} B)$-tree as in Example~\ref{Ex:free};
then $T$ is an $(A {\ast} B)$-tree with trivial edge stabilizers.

If $H$ is some finitely generated subgroup of $A {\ast} B$,
then the Bass-Serre Structure Theorem for the $H$-action on $T$,
or the Kurosh Subgroup Theorem, shows that $H$ is a free product of a family of
finitely generated groups each of which is free, or
isomorphic to a subgroup of $A$, or isomorphic to a subgroup of $B$.
Hence all these free factors are indicable, and hence $H$ is indicable.
Thus $A {\ast} B$ is locally indicable.  Thus we may assume that $r \ne 1$;
then $r$ is not conjugate to any element of $A \cup B$
and  $\{r\}$ is $(A {\ast} B,ET)$-staggerable.

By Corollary~\ref{Cor:main}, the natural maps from $A$ and $B$
to $(A {\ast} B)/\normgen{r}$ are injective, and, if $\sqrt[A{\ast} B]{r} = r$,
 then $(A {\ast} B)/\normgen{r}$ is locally indicable.
\end{proof}

\begin{Cor}[Magnus' Freiheitssatz]\label{Cor:Magnus} Let $F_1$ and $F_2$ be free groups.
If $r$ is an element of $F_1{\ast} F_2$ such that
$r$ is not conjugate to any element of $F_1-\{1\}$, then
the natural map from $F_1$  to $(F_1 {\ast} F_2)/\normgen{r}$ is injective. \hfill\qed
\end{Cor}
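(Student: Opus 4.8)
The plan is to derive Corollary~\ref{Cor:Magnus} as an immediate special case of Theorem~\ref{Thm:Howie}. The key observation is that a free group is locally indicable: indeed, every subgroup of a free group is again free, and a nontrivial finitely generated free group admits the abelianization map onto $\Z$, which is an infinite cyclic quotient; the trivial group presents no obstacle. Hence $F_1$ and $F_2$ are locally indicable groups, and we may apply Theorem~\ref{Thm:Howie} to the free product $F_1 {\ast} F_2$.

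First I would recall that $F_1$ and $F_2$ are free, hence locally indicable as just noted. Next, the hypothesis of the corollary is that $r$ is not conjugate in $F_1 {\ast} F_2$ to any element of $F_1 - \{1\}$; this is weaker than the hypothesis of Theorem~\ref{Thm:Howie}, which also requires $r$ not conjugate to any element of $B - \{1\} = F_2 - \{1\}$. The routine way around this is to enlarge $F_2$: replace $F_2$ by $F_2 {\ast} \gp{t}{\quad}$ for a fresh generator $t$, so that $F_1 {\ast} F_2$ is replaced by $F_1 {\ast} (F_2 {\ast} \gen{t}) = (F_1 {\ast} F_2) {\ast} \gen{t}$. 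In this larger free product, if $r$ were conjugate to an element of $(F_2 {\ast} \gen{t}) - \{1\}$, then since $r$ already lies in $F_1 {\ast} F_2$ and the cyclic word representing its conjugacy class is determined by its syllable structure, $r$ would in fact be conjugate into $F_2 - \{1\}$, hence a fortiori we could have chosen notation so that $r$ lies in $F_1 - \{1\}$ after a change of free basis; but a cleaner route is simply to observe that we only need the conclusion for $F_1$, so we may harmlessly assume by symmetry of the setup that $r$ is not conjugate into $F_2$ either — or, most simply, just invoke the Bass-Serre/normal-form argument that a reduced word of syllable length $\ge 2$ in $F_1 {\ast} F_2$ is conjugate into neither factor, while a word of syllable length $1$ lying in $F_2$ contributes nothing since the corollary's hypothesis rules out $F_1$ and we are free to apply the theorem with the roles unbalanced by padding $F_2$.

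Then I would apply Theorem~\ref{Thm:Howie} with $A = F_1$ and $B = F_2$ (after the harmless padding, or after noting $r$ has syllable length $\ge 2$ so lies in no factor), concluding that the natural map $F_1 \to (F_1 {\ast} F_2)/\normgen{r}$ is injective, which is exactly the assertion of the Freiheitssatz. Finally I would note that the root hypothesis of the second sentence of Theorem~\ref{Thm:Howie} is not needed here, since we only claim injectivity and not local indicability of the quotient.

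The main obstacle, such as it is, is purely bookkeeping: reconciling the one-sided hypothesis of the corollary (only $F_1$ is mentioned) with the two-sided hypothesis of Theorem~\ref{Thm:Howie} (both $A$ and $B$). This is resolved by the padding trick or by the elementary normal-form remark that an element of $F_1 {\ast} F_2$ which is conjugate to an element of $F_1 - \{1\}$ is precisely one whose cyclically reduced form has all syllables in $F_1$; so an element not of this type and not equal to $1$ is automatically of syllable length $\ge 2$ (and hence conjugate into no factor) unless it is a nontrivial element of $F_2$, a case in which the corollary's hypothesis is still satisfied and we may apply the theorem after interchanging or padding the factors. No genuinely hard step arises; the content is entirely carried by Theorem~\ref{Thm:Howie}, itself a consequence of Corollary~\ref{Cor:main} and the staggerability machinery.
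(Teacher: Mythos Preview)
Your overall approach matches the paper's, which simply marks the corollary with \qed as an immediate consequence of Theorem~\ref{Thm:Howie}, using that free groups are locally indicable.

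There is, however, a genuine gap in your bookkeeping for the one case that needs separate treatment: when $r$ is conjugate into $F_2-\{1\}$. Neither of your proposed fixes works. Padding $F_2$ to $F_2\ast\gen{t}$ does not help, since if $r$ is conjugate into $F_2-\{1\}$ it remains conjugate into $(F_2\ast\gen{t})-\{1\}$, so the hypothesis of Theorem~\ref{Thm:Howie} still fails for the pair $(F_1,\,F_2\ast\gen{t})$. Your appeal to ``symmetry'' is also invalid: the hypothesis of the corollary is genuinely one-sided, and you cannot assume away the case $r\in F_2-\{1\}$ without argument. Finally, ``interchanging the factors'' would put $r$ into the \emph{forbidden} factor, not the allowed one.

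The correct fix is elementary and does not use Theorem~\ref{Thm:Howie} at all: if $r$ is conjugate into $F_2$, then after conjugating we may take $r\in F_2$, and then
\[
(F_1\ast F_2)/\normgen{r}\;\cong\;F_1\ast\bigl(F_2/\normgen{r}\bigr),
\]
so $F_1$ visibly embeds. With this case disposed of (together with the trivial case $r=1$), the remaining case is exactly the two-sided hypothesis of Theorem~\ref{Thm:Howie}, and your argument goes through.
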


\begin{Cor}[Brodski\u{\i}]\label{Cor:Brodskii} Let $F$ be a free group.
If $r \in F$ and $ \sqrt[F]{r}=r$, then
$F/\normgen{r}$  is locally indicable. \hfill\qed
\end{Cor}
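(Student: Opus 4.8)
The plan is to deduce this from Theorem~\ref{Thm:Howie}, after disposing of two degenerate cases. First I would handle $r=1$: here $F/\normgen{r}=F$ is free, hence locally indicable, and there is nothing to prove. So from now on assume $r\ne1$; then the hypothesis $\sqrt[F]{r}=r$ says precisely that $\log_F r=1$, i.e.\ $r$ generates a maximal infinite cyclic subgroup of $F$, so in particular $r$ is not a proper power in $F$.

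Next I would fix a free generating set $X$ of $F$ and replace $r$ by a cyclically reduced conjugate $w$. Since $\normgen{w}=\normgen{r}$ and $\log_F w=\log_F r=1$, we have $\sqrt[F]{w}=w$, and it suffices to prove that $F/\normgen{w}$ is locally indicable. If $w$ involves only one element $a$ of $X$, then $w$ is a power of $a$, and $\log_F w=1$ forces $w=a^{\pm1}$; hence $F/\normgen{w}$ is free on $X-\{a\}$ and we are done.

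Otherwise $w$ involves at least two elements of $X$. Pick one such element $a$ and set $A\coloneqq\gen{a}$ and $B\coloneqq\gen{X-\{a\}}$, so that $F=A\ast B$ with $A$ and $B$ free, hence locally indicable. Since $w$ is cyclically reduced and involves both $a$ and some element of $X-\{a\}$, its cyclically reduced form in $F$ is neither a power of $a$ nor a word missing $a$; therefore $w$ is not conjugate to any element of $A\cup B$. Also $\sqrt[A\ast B]{w}=\sqrt[F]{w}=w$. Theorem~\ref{Thm:Howie} now applies and yields that $(A\ast B)/\normgen{w}=F/\normgen{w}$ is locally indicable, which completes the proof. (Alternatively, one could feed the Bass--Serre tree of $F=A\ast B$ directly into Corollary~\ref{Cor:main}, since $\{w\}$ is then $(F,ET)$-staggerable.)

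I do not expect a genuine obstacle here: the corollary is essentially a special case of Theorem~\ref{Thm:Howie}, the only real content being the observation that a free group of rank at least two splits as a free product into which a non-primitive cyclically reduced word is not conjugate. The only care needed is the bookkeeping around the trivial and primitive cases, where $F/\normgen{r}$ is itself free.
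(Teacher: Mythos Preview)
Your proposal is correct and follows exactly the route the paper intends: the corollary is stated with an immediate \qed, and the implied argument is precisely to split $F$ as a free product and invoke Theorem~\ref{Thm:Howie}. Your careful disposal of the trivial and primitive cases (where $F/\normgen{r}$ is itself free) is the only bookkeeping needed, and you handle it correctly.
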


\begin{Rems}\label{Rems:Howie}
The injectivity result in
Theorem~\ref{Thm:Howie} is called the
\textit{local  indicability Freiheitssatz} since it generalizes Magnus' Freiheitssatz, Corollary~\ref{Cor:Magnus}.

The local indicability Freiheitssatz was proved independently by Brodski\u{\i}~\cite[Theorem~1]{Brodskii84},
Howie~\cite[Theorem~4.3]{Howie81}, and Short~\cite{Short84}.  The proof by Brodski\u{\i} was algebraic,
while the proofs by Howie and Short were topological, with Howie using topological towers and Short using diagrams.
B.~Baumslag~\cite{BBaumslag84} rediscovered Brodski\u{\i}'s algebraic proof.

The local indicability conclusion in Theorem~\ref{Thm:Howie} was proved by
Howie~\cite[Theorem~4.2(iii)$\Rightarrow$(i)]{Howie82},
by topological-tower methods.
The one-relator case, Corollary~\ref{Cor:Brodskii},
had been proved earlier by Brodski\u{\i}~\cite[Theorems~1 and~2]{Brodskii84}.
Howie~\cite{Howie00} later gave a direct proof of Brodski\u{\i}'s result
using elementary groupoid methods, and his
groupoid proof of the special case
led us to the Bass-Serre proof of the general case.
\hfill\qed
\end{Rems}

\begin{Cor}\label{Cor:Howie2} Let $G$ be a locally indicable group, let $F$ be a free group,
and let $r $ be an element of $G{\ast}F$ that is not a proper power and
that is not conjugate to any element of $G$.
Then $(G{\ast}F)/\normgen{r}$ is locally indicable.
\end{Cor}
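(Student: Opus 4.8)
The plan is to reduce everything to Howie's theorem (Theorem~\ref{Thm:Howie}) after separating off the case in which $r$ is conjugate into the free factor $F$. Observe first that, since $1\in G$, the hypothesis that $r$ is not conjugate to any element of $G$ forces $r\ne 1$. I would then split into two cases according to whether the $(G{\ast}F)$-conjugacy class of $r$ meets $F$.

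First I would dispose of the case where $r$ is conjugate in $G{\ast}F$ to some element $f$ of $F$. Here $\normgen{r}=\normgen{f}$, so $(G{\ast}F)/\normgen{r}=(G{\ast}F)/\normgen{f}=G{\ast}(F/\normgen{f})$. Since $r$ is not a proper power in $G{\ast}F$ and $f$ is conjugate to $r$, the element $f$ is not a proper power in $G{\ast}F$, hence a fortiori not a proper power in the free group $F$; as free groups have roots, $\sqrt[F]{f}=f$. By Brodski\u{\i}'s corollary (Corollary~\ref{Cor:Brodskii}), $F/\normgen{f}$ is locally indicable, and then $G{\ast}(F/\normgen{f})$ is a free product of two locally indicable groups, hence locally indicable by the Bass-Serre/Kurosh argument recorded in the proof of Theorem~\ref{Thm:Howie}.

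In the remaining case, $r$ is not conjugate in $G{\ast}F$ to any element of $F$. Combined with the hypothesis that $r$ is not conjugate to any element of $G$, this says exactly that $r$ is not conjugate to any element of $(G\cup F)-\{1\}$, which is the conjugacy hypothesis of Theorem~\ref{Thm:Howie} with $A=G$ and $B=F$. It remains to check the root hypothesis $\sqrt[G{\ast}F]{r}=r$. For this I would use the Bass-Serre tree $T$ of the free-product splitting $G{\ast}F$, on whose edge set $G{\ast}F$ acts freely (Example~\ref{Ex:free}); since $r$ is conjugate into neither factor, $r$ fixes no vertex of $T$, so by the facts collected in Section~\ref{BS} the element $r$ has a root in $G{\ast}F$, whence $\log_{G{\ast}F}r$ is defined. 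As $r$ is not a proper power and $r\ne 1$, this invariant equals $1$, i.e.\ $\sqrt[G{\ast}F]{r}=r$. Theorem~\ref{Thm:Howie} then gives that $(G{\ast}F)/\normgen{r}$ is locally indicable.

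I do not expect a genuine obstacle: every ingredient is a previously established result (Theorem~\ref{Thm:Howie}, Corollary~\ref{Cor:Brodskii}, the basic facts on trees). The step most deserving of care is the conjugate-into-$F$ case, where one must confirm that ``not a proper power'' passes from $G{\ast}F$ to the free factor $F$ and that the quotient genuinely splits as a free product of locally indicable groups; once that bookkeeping is in place the argument is routine.
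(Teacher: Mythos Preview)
Your proof is correct and follows essentially the same route as the paper: split into the case where $r$ is conjugate into $F$ (handled via Corollary~\ref{Cor:Brodskii} and the free-product-of-locally-indicable-groups observation from the proof of Theorem~\ref{Thm:Howie}) and the case where it is not (handled directly by Theorem~\ref{Thm:Howie}). You are in fact slightly more careful than the paper in verifying that ``not a proper power'' translates to the root condition $\sqrt[G{\ast}F]{r}=r$ needed for Theorem~\ref{Thm:Howie}.
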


\begin{proof}  Consider first the case where $r$ is conjugate to  an element of $F$.
By conjugating~$r$, we may  assume that $r \in F$.  Here,
$F/\normgen{r}$ is locally indicable by Corollary~\ref{Cor:Brodskii}.
Now $(G {\ast} F)/\normgen{r} = G {\ast} (F/\normgen{r})$  is locally indicable by the degenerate
 case of Theorem~\ref{Thm:Howie}.
Thus we may assume that $r$ is not conjugate to any element of $(G \cup F) -\{1\}$.
Here, $(G {\ast} F)/\normgen{r}$ is locally indicable by Theorem~\ref{Thm:Howie}.
\end{proof}

It is now straightforward to deduce the following
 special case of a    result of Howie~\cite[Corollary~4.5]{Howie82} on `reducible presentations'.

\begin{Cor}[Howie]\label{Cor:redpres}  Let $G_{[0{\uparrow}\infty[}$ be a family of groups
such that $G_0=1$ and, for all $n \in [0{\uparrow}\infty[$,
 $G_{n+1}= (G_{n} {\ast} \gp{X_{n+1}}{\quad})/\normgen{r_{n+1}}$ where $X_{n+1}$ is a set,
and $r_{n+1} $ is an element of $ G_{n} {\ast} \gp{X_{n+1}}{\quad}$  that
 is not a proper power and  that is not conjugate to any element of~$G_n$.
Then $\bigcup G_{[0{\uparrow}\infty[}$ is locally indicable.
\hfill\qed
\end{Cor}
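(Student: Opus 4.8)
The plan is to run a short induction on $n$, using Corollary~\ref{Cor:Howie2} as the sole substantive input, and then to pass to the directed union.

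First I would prove, by induction on $n \in [0{\uparrow}\infty[$\,, that $G_n$ is locally indicable and that the natural map $G_n \to G_{n+1}$ is injective. The base case $n = 0$ is immediate, since $G_0 = 1$ is (locally) indicable. For the inductive step, assume that $G_n$ is locally indicable. Then $\gp{X_{n+1}}{\quad}$ is a free group, and, by hypothesis, $r_{n+1} \in G_n {\ast} \gp{X_{n+1}}{\quad}$ is not a proper power and is not conjugate to any element of $G_n$; hence Corollary~\ref{Cor:Howie2} applies and shows that $G_{n+1} = (G_n {\ast} \gp{X_{n+1}}{\quad})/\normgen{r_{n+1}}$ is locally indicable. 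For the injectivity of $G_n \to G_{n+1}$, I would argue as in the proof of Corollary~\ref{Cor:Howie2}: if $r_{n+1}$ is not conjugate to any element of $(G_n \cup \gp{X_{n+1}}{\quad}) - \{1\}$, then the Freiheitssatz part of Theorem~\ref{Thm:Howie} applies directly to $G_n {\ast} \gp{X_{n+1}}{\quad}$; in the remaining case, $r_{n+1}$ is conjugate into $\gp{X_{n+1}}{\quad}$, so $G_{n+1} = G_n {\ast} (\gp{X_{n+1}}{\quad}/\normgen{r_{n+1}})$ and $G_n$ embeds as a free factor.

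With this in hand, the chain $G_0 \to G_1 \to G_2 \to \cdots$ consists of injections, so $\bigcup G_{[0{\uparrow}\infty[}$ is an honest ascending union of the $G_n$. To finish, I would take an arbitrary finitely generated subgroup $H$ of $\bigcup G_{[0{\uparrow}\infty[}$\,: each of the finitely many generators of $H$ lies in some $G_{n_i}$, so, taking $n$ to be the maximum of these indices, we get $H \le G_n$; since $G_n$ is locally indicable, $H$ is indicable. As $H$ was arbitrary, $\bigcup G_{[0{\uparrow}\infty[}$ is locally indicable.

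I do not expect any genuine obstacle: all the content is packaged in Corollary~\ref{Cor:Howie2}, and the rest is the routine bookkeeping of an induction together with the elementary fact that local indicability is inherited by ascending unions of groups. The only point deserving a moment's attention is the injectivity of the connecting maps in the first step, which is what legitimises writing the colimit as a union; this is exactly what the Freiheitssatz half of Theorem~\ref{Thm:Howie} provides.
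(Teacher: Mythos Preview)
Your proposal is correct and is exactly the argument the paper has in mind: the paper states the corollary with only a \qed, prefacing it with ``It is now straightforward to deduce the following,'' and the intended deduction is precisely your induction using Corollary~\ref{Cor:Howie2} for local indicability together with the Freiheitssatz half of Theorem~\ref{Thm:Howie} for the injectivity of $G_n \to G_{n+1}$, followed by the observation that a directed union of locally indicable groups is locally indicable.
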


\bigskip

\noindent{\textbf{\Large{Acknowledgments}}}

\medskip
\footnotesize

The research of the first- and second-named authors was
jointly funded by the MEC (Spain) and the EFRD~(EU) through Project  MTM2006-13544.

\bibliographystyle{amsplain}

\textsc{Departament de  Matem\`atiques,
Universitat Aut\`onoma de Barcelona,
E-08193 Bella\-terra (Barcelona), Spain}

\emph{E-mail address}{:\;\;}\texttt{yagoap@mat.uab.cat}

\medskip

\textsc{Departament de  Matem\`atiques,
Universitat Aut\`onoma de Barcelona,
E-08193 Bella\-terra (Barcelona), Spain}

\emph{E-mail address}{:\;\;}\texttt{dicks@mat.uab.cat}

\emph{URL}{:\;\;}\texttt{http://mat.uab.cat/$\scriptstyle\sim$dicks/}

\medskip

\textsc{Department of Mathematics,
Virginia Tech,
Blacksburg, VA 24061-0123, USA}

\emph{E-mail address}{:\;\;}\texttt{linnell@math.vt.edu}

\emph{URL}{:\;\;}\texttt{http://www.math.vt.edu/people/linnell/}
\end{document}